\renewcommand\part{\@startsection{part}{2}%
	\z@{0.5\linespacing\@plus2\linespacing}{\linespacing}%
	{\normalfont\large\scshape\bfseries\centering}}
\definecolor{myblue}{rgb}{0.09,0.32,0.44} 
\numberwithin{equation}{section}
\theoremstyle{plain}
\theoremstyle{plain}
\newtheorem{theorem}{Theorem}
\newtheorem{lem}{Lemma}[section]
\newtheorem{corollary}[lem]{Corollary}
\newtheorem{lemma}[lem]{Lemma}
\newtheorem{prop}[lem]{Proposition}
\newtheorem{proposition}[lem]{Proposition}
\newtheorem{thm-n}[lem]{Theorem}
\theoremstyle{remark}
\newtheorem*{rem*}{Remark}
\newtheorem*{notat*}{Notation}
\newtheorem*{exm*}{Example}
\theoremstyle{definition}
\newtheorem{dfn}[lem]{Definition}
\newcommand\reallywidetilde[1]{\ThisStyle{%
		\setbox0=\hbox{$\SavedStyle#1$}%
		\stackengine{-.1\LMpt}{$\SavedStyle#1$}{%
			\stretchto{\scaleto{\SavedStyle\mkern.2mu\AC}{.5150\wd0}}{.6\ht0}%
		}{O}{c}{F}{T}{S}%
}}
\newcommand{\bflam}{\lambda}
\newcommand{\ZZ}{\mathbb{Z}}
\renewcommand{\AA}{\mathbb{A}}
\newcommand{\FF}{\F}
\newcommand{\F}{\mathbb{F}}
\newcommand{\Prob}{\mathrm{Prob}}
\newcommand{\Disc}{\mathrm{Disc}}
\newcommand{\bfA}{{\mathbf{A}}}
\newcommand{\bfa}{{\mathbf{a}}}
\newcommand{\CD}{\mathcal{D}}
\newcommand{\CI}{\mathcal{I}}
\newcommand{\CJ}{\mathcal{J}}
\newcommand{\CS}{\mathcal{S}}
\DeclareMathOperator{\ord}{ord}
\DeclareMathOperator{\Spec}{Spec}
\DeclareMathOperator{\chr}{char}
\DeclareMathOperator{\GL}{GL}
\DeclareMathOperator{\SL}{SL}
\newcommand{\Gal}{{\mathrm{Gal}}}
\newcommand{\monic}{{\mathrm{monic}}}
\renewcommand{\tilde}{\widetilde}
\let\ogphi\phi
\renewcommand{\phi}{\varphi}
\newcommand{\bs}\boldsymbol{}
\newcommand{\Fr}{{\rm Fr}}
\renewcommand{\bar}[1]{\overline{#1}}
\newcommand{\tmatrix}[3]{{\left[\begin{array}{cc}#1&#2\\&#3\end{array}\right]}}
\newcommand{\stmatrix}[3]{{\left[\begin{smallmatrix}{#1}& {#2}\\ {}& {#3} \end{smallmatrix}\right]}}
\def\moverlay{\mathpalette\mov@rlay}
\def\mov@rlay#1#2{\leavevmode\vtop{%
		\baselineskip\z@skip \lineskiplimit-\maxdimen
		\ialign{\hfil$\m@th#1##$\hfil\cr#2\crcr}}}
\newcommand{\charfusion}[3][\mathord]{
	#1{\ifx#1\mathop\vphantom{#2}\fi
		\mathpalette\mov@rlay{#2\cr#3}
	}
	\ifx#1\mathop\expandafter\displaylimits\fi}
\newcommand{\con}{{\rm con}}
\renewcommand{\subset}{\subseteq}
\renewcommand{\supset}{\supseteq}
\title{Galois groups of random additive polynomials}
\author{Lior Bary-Soroker, Alexei Entin and Eilidh McKemmie}
\date{\today}
\begin{document}

	\maketitle
	
	\begin{abstract}
		We study the distribution of the Galois group  of a random $q$-additive polynomial over a rational function field:
		For $q$ a power of a prime $p$, let $f=X^{q^n}+a_{n-1}X^{q^{n-1}}+\ldots+a_1X^q+a_0X$ be a random polynomial chosen uniformly from the set of $q$-additive polynomials of degree $n$ and height $d$, that is, the coefficients are independent uniform polynomials of degree $\deg a_i\leq d$. The Galois group $G_f$ is a random subgroup of $\GL_n(q)$. 
		Our main result shows that $G_f$ is almost surely large as $d,q$ are fixed and $n\to \infty$. For example, we give necessary and sufficient conditions so that $\SL_n(q)\leq G_f$ asymptotically almost surely. Our proof uses the classification of maximal subgroups of $\GL_n(q)$.
		We also consider the
		limits: $q,n$ fixed, $d\to \infty$ and $d,n$ fixed, $q\to \infty$, which are more elementary.
	\end{abstract}
	
	\section{Introduction}
	
	The Galois theory of random polynomials is a classical area of study going back to Hilbert and van der Waerden. Recently, there is a renewed interest in this area. In the so-called large box model, in which the coefficients are chosen uniformly from a large box of integers whose size tends to infinity, recent  results were obtained by Anderson et al \cite{andersonquantitative}, Chow and Dietmann \cite{chow2021towards} and Bhargava \cite{bhargava2021galois}. In the restricted coefficients (or small box) model, in which the coefficients are bounded (or chosen from a finite set), and the degree is tending to infinity, recent results were obtained by the first author, Kozma and Koukoulopoulos \cite{bary2020irreducible, bary2020irreducibility} and Breuillard and Varj\'u \cite{breuillard2019irreducibility}. The function field analogue of this problem was studied by the second author jointly with Popov \cite{EP}. See also \cite{eberhard2022characteristic, ferber2022random} for recent results on the characteristic polynomial of a random matrix in different ensembles.
	
	The present work studies random additive polynomials over the function field $\F_q(t)$. Let $p$ be a prime and $q$ a power of $p$. Let  $K$ be a field containing $\mathbb{F}_q$; in particular, $\chr (K) = p$. A polynomial $f\in K[X]$ is called \emph{additive} (resp.\ \emph{$q$-additive}) if $f$ induces a linear map (resp.\ $\mathbb{F}_q$-linear map) on the algebraic closure $\bar{K}$ of $K$.  
	Additive polynomials play a central role in arithmetic of global function fields, and one motivation comes from understanding the distribution of their Galois groups. Another motivation is derived from the fact that over the integers the Galois groups are typically the full permutation group, and, in contrast, the roots of additive polynomials have extra structure, and hence the Galois groups are never the full symmetric group.

	The polynomial $f$ is $q$-additive if and only if it has the form  $f(X) =  a_n X^{q^n} + a_{n-1} X^{q^{n-1}} + \cdots+ a_0X$; $f$ is separable if and only if $a_0\neq 0$; and the set of its roots is a vector space over $\FF_q$. 
	Assume $f$ is separable and let $G_f$ be the Galois group of $f$ over $K$. Then, the action of $G_f$ on the roots respects the $\FF_q$-linear structure, and so we have a faithful representation  $G_f\leqslant \GL_n(q)$. Galois groups are only defined up to conjugation, but it is convenient to neglect this in the notation. So, for example, we will write $G_f=H$, if they are conjugate. 
	
	We are interested in random $q$-additive polynomials $f(t,X)\in \FF_q[t][X]$ and the distribution of $G_f$.  More precisely, denote by $\FF_q[t]_{\leq d}$ the set of polynomials with coefficients in $\FF_q$ of degree $\le d$. Let $a_0, \ldots, a_{n-1}$ be independent random variables taking 
	values in $\FF_q[t]_{\leq  d}$ uniformly, and let 
	\[
	f(X) =   X^{q^n} + a_{n-1} X^{q^{n-1}} + \cdots + a_0X
	\]
	be a random $q$-additive monic polynomial. If $a_0\neq 0$, then we have a random subgroup $G_f\leqslant \GL_n(q)$. There are three natural limits: The large box model in which $n,q$ are fixed and $d\to \infty$, the large finite field model in which $d,n$ are fixed and $q\to \infty$, and the restricted coefficient (or small box) model in which $q,d$ are fixed and $n\to \infty$. The large box model follows easily from the generic case (in which the coefficients are variables \cite{Dickson_1911,wilkerson1983primer}) combined with Hilbert's Irreducibility Theorem:  $G_f = \GL_n(q)$ asymptotically almost surely, see Section~\ref{sec:largebox} for details. In Section~\ref{sec:large q} we show the same in the large finite field model, which is somewhat more challenging.
	
	The restricted coefficient model is significantly more challenging than the other models\footnote
	{The large box and large finite field models become much more challenging if one attempts to identify all Galois groups that occur with probability $\ge q^{-d}$, similarly to \cite{bhargava2021galois}. We do not pursue this finer investigation in the present work.}: it does not follow from the generic case, there are several groups that occur with positive probability, and it necessitates the development of a novel approach that incorporates deep results from group theory. 
	Given a polynomial $f(t,X)\in \FF_q[t,X]$, we define  
	$\con_t(f)\in \FF_q[X]$ to be the greatest common divisor of the coefficients of $f$ when considered as a polynomial in $t$, that is, the largest monic divisor of $f$ in $\FF_q[X]$. For an additive polynomial necessarily $X\,|\,\con_t(f)$. If $h=\con_t(f)\neq X$, then the roots of $h$ form a nontrivial invariant subspace of $G_f$, in particular, $G_f\not\geqslant \SL_n(q)$. Our first result says that $\con_t(f)\neq X$ is (almost surely) the only obstructions for a large Galois group.
	
	\begin{theorem}\label{thm:1}
		Fix $d>0$ and $q$ a prime power. 
		Let $a_0,\ldots a_{n-1}$ be independent random variables, taking values in $\FF_q[t]_{\le d}$ uniformly. Let  $f = X^{q^n} + a_{n-1} X^{q^{n-1}} + \cdots + a_0 X$ and let $G_f$ be the Galois group of $f$ over $\FF_q(t)$. Then 
		\[
		\lim_{n\to \infty} {\rm Prob} (G_f \geqslant \SL_n(q)  \mid \con_t(f)=X) = 1.  
		\]
	\end{theorem}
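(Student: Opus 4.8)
The plan is to use the classification of maximal subgroups of $\GL_n(q)$, in the form of Aschbacher's theorem, to show that $G_f$ avoids every maximal subgroup not containing $\SL_n(q)$ asymptotically almost surely (conditioned on $\con_t(f)=X$). Since a subgroup $G\leqslant \GL_n(q)$ contains $\SL_n(q)$ if and only if it is contained in no maximal subgroup from the relevant Aschbacher classes, it suffices to bound, for each class $\mathcal{C}_i$, the probability that $G_f$ is contained in some member of $\mathcal{C}_i$, and to show each such bound tends to $0$. The Aschbacher classes are: reducible subgroups (stabilizers of a subspace), imprimitive subgroups (stabilizers of a direct sum decomposition), field extension subgroups (preserving an $\F_{q^r}$-structure), tensor product and tensor-induced subgroups, subfield subgroups, classical subgroups (preserving a form), and the class $\mathcal{S}$ of almost simple subgroups acting irreducibly. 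The conditioning $\con_t(f)=X$ is precisely what is needed to kill the ``obvious'' reducible obstruction coming from a $G_f$-invariant subspace defined over $\F_q$.

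First I would set up the basic dictionary: the splitting field of $f$ over $\F_q(t)$, the ramification of the corresponding cover of $\mathbb{P}^1_{\F_q}$, and how specialization at $t$ or reduction mod small primes of $\F_q[t]$ controls $G_f$. The key input is a ``local'' construction: by choosing the coefficients $a_i$ to have prescribed behavior at a few places of $\F_q[t]$ (e.g. a place where exactly one $a_i$ is a unit and the others vanish, or where the Newton polygon of $f$ forces a specific cycle type of an inertia/Frobenius element), one produces elements of $G_f$ of controlled type with probability bounded below. I expect the argument to proceed by producing, with positive probability independent of $n$, elements of $G_f$ such as: a transvection or an element with a large Jordan block, an element whose characteristic polynomial is irreducible of degree $n$ (a ``Singer-type'' element, forcing irreducibility and ruling out reducible and imprimitive cases), and an element whose rational canonical form is incompatible with a tensor decomposition or with lying in a classical group. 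Then a union bound over the (polynomially many in $n$, or in the worst case subexponential) maximal subgroups in each geometric Aschbacher class, combined with the fact that a group containing such elements cannot lie in that class, finishes those classes.

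The main obstacle, as usual with this kind of argument, will be the class $\mathcal{S}$ of almost simple irreducible subgroups: there is no good ``structural'' witness and these subgroups are not classified by an easily parametrized family. The standard route is to use the fact that the minimal faithful representation degree of any group in $\mathcal{S}$ (other than $\SL$, and excluding the geometric classes) grows, while the order of such a group is bounded by roughly $q^{O(\sqrt n)}$ or by $n^{O(\log n)}$ type bounds (Liebeck's theorem bounding $|G|$ for $G\in\mathcal{S}$ by $q^{3n}$ plus finer bounds excluding the natural classical cases). Combined with an estimate showing that the probability $G_f$ is contained in any \emph{fixed} group of order $\le B$ is at most $B/$ (something growing), or more precisely an estimate that the splitting field of $f$ has degree growing faster than any such $B$ allows, this rules out $\mathcal{S}$. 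Producing that lower bound on $|G_f|$ (equivalently, on the degree of the splitting field) with high probability is where the real work lies; I would likely extract it from the irreducibility of the reduction of $f$ at many independent places, or from the presence of an element of large order constructed via a Newton polygon / Artin–Schreier-type local analysis, and this interplay between the function-field arithmetic and the group-theoretic order bounds is the crux of the proof.
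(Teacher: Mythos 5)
You correctly identify the two main tools — the Aschbacher classification of maximal subgroups of $\GL_n(q)$ and specialization at places of $\F_q[t]$ to produce Frobenius elements — and you correctly observe that the conditioning $\con_t(f)=X$ is what kills the obvious $\mathcal C_1$ obstruction. But the concrete mechanism you propose diverges from the paper's and has genuine gaps. You want to produce, ``with positive probability independent of $n$'', elements of $G_f$ of specific types (a transvection, a Singer element with irreducible characteristic polynomial, etc.), and then argue class by class with a union bound over maximal subgroups. Two problems. First, the Frobenius at $t=0$ has characteristic polynomial close to uniform over $\F_q[X]_n^{\monic}$, so it is irreducible only with probability on the order of $1/n$, not bounded below by a constant; and in any case a positive lower bound on the probability of a witness is insufficient, since $\Prob(G_f\not\supseteq\SL_n(q))$ must tend to $0$ and you cannot freely amplify by ``independent'' Frobenius elements coming from a single random $f$. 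The paper sidesteps all of this with one unified counting step: the Frobenius at $t=0$ has nearly uniform characteristic polynomial (Lemma~\ref{lem:uniform}), and the total number of characteristic polynomials of elements lying in \emph{any} irreducible $G\leqslant\GL_n(q)$ with $G\not\supseteq\SL_n(q)$ is $o(q^n)$ (Proposition~\ref{prop:irreducible subgroups}, proved in Appendix~\ref{sec:subgroups large q} via Fulman--Guralnick conjugacy-class bounds). So that single Frobenius element misses the bad set a.a.s., ruling out $\mathcal C_2,\ldots,\mathcal C_8$ and $\mathcal S$ simultaneously with no union bound over subgroups.

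There are two further mismatches with the paper. Irreducibility of $G_f$ (the $\mathcal C_1$ case) is not obtained from a Singer-type element at all: the paper proves it a.a.s.\ by a direct height/counting argument on $q$-additive divisors of $f$ (Proposition~\ref{prop:irred} and Corollary~\ref{cor:irred}), and this is exactly the step where the hypothesis $\con_t(f)=X$ enters. And for $\mathcal S$, you propose an order comparison — lower-bound $|G_f|$ and upper-bound $|M|$ by Liebeck-type estimates — but the paper never lower-bounds $|G_f|$; it bounds the number of \emph{characteristic polynomials} arising from $\mathcal S$-groups via conjugacy-class counts. The lower bound on $|G_f|$ that your route would require is itself nontrivial and is not established by the steps you outline, so this part of the plan would need substantial additional work that the paper does not do.
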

	
	We note that with positive probability, we have that $h=\con_t(f)\neq X$. 
	Then, $h\in \FF_q[X]$ is also a $q$-additive polynomial (see Corollary \ref{cor:content additive}), we write  $h = X^{q^{\eta}} + h_{\eta -1}X^{q^{\eta-1}} + \cdots + h_0 X$ and we let 
	\begin{equation}\label{Hmatrix}
		D = {\begin{pmatrix}0&&&&-h_{0}\\1&& &&-h_{1}\\ & &\ddots & & \vdots \\&& &1&-h_{{\eta-1}}\end{pmatrix}}
	\end{equation}
	be the companion matrix. Then, the roots of $h$ form a $G_f$-invariant subspace of dimension $\eta = \log_q \deg h$ and  the action of $G_f$ restricted to this subspace is via the Frobenius automorphism $\Fr_q$, which is given explicitly by the matrix $D$ (in a suitable basis). Therefore, $G_f$ is contained in the group of matrices of the form $\left(\begin{smallmatrix}\left<D\right> & *\\&\GL_{n-\eta}\end{smallmatrix}\right)$. It turns out that the typical Galois group depends on $a_0$ and $h=\con_t(f)$:
	
	\begin{theorem}\label{thm:2}
		Let $f$ be a random $q$-additive polynomial, as in Theorem~\ref{thm:1}.
		Let $h=\sum_{i=0}^\eta h_iX^{q^i}\in \FF_q[X]$ be a fixed $q$-additive polynomial with $h_0\neq 0$, and $D$ its companion matrix as in \eqref{Hmatrix}. Let $k\geq 0$ be an integer, $u\in\F_q[t],1\le\deg u\le d/k$ monic and not a power of another polynomial, $c\in \FF_q^\times$. 
		Let $\Gamma_{n,h,c,k}\leqslant \GL_n(q)$ be the subgroup 
		\begin{equation}\label{eq:def gamma}
			\Gamma_{n,h,c,k}=\left\{\left(\begin{smallmatrix}D^i & A\\0&B\end{smallmatrix}\right): i\in\ZZ,A\in M_{\eta\times(n-\eta)}(\F_q),B\in\GL_{n-\eta}(q),\det(B)\in\frac{c^i}{\det D^i} \FF_q^{\times k}\right\}.
		\end{equation}
		Then 
		\[
		\lim_{n\to \infty} \Prob(G_f=\Gamma_{n,h,c,k} \mid \con_t(f)=h, a_0=cu^k) =1
		\]
		(here the equality $G_f=\Gamma_{n,h,c,k}$ is with respect to a basis as described in the paragraph preceding the theorem).
		
		
		
	\end{theorem}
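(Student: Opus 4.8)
\emph{Strategy.} I will establish the two inclusions $G_f\subseteq\Gamma:=\Gamma_{n,h,c,k}$ (deterministically, given the conditioning) and $G_f\supseteq\Gamma$ (asymptotically almost surely), deducing the latter from Theorem~\ref{thm:1} after stripping the content. For the upper bound let $V$ be the $\F_q$-space of roots of $f$ and $V_h\subset V$ that of $h$. As explained just before the statement, $V_h$ is $G_f$-invariant, $G_f$ acts on $V_h$ through the cyclic group $\langle\Fr_q\rangle=\langle D\rangle$ (the splitting field of $h\in\F_q[X]$ over $\F_q(t)$ being a constant-field extension $\F_{q^N}(t)$), and the kernel of $G_f\to\GL(V_h)\times\GL(V/V_h)$ lies in $U:=\mathrm{Hom}(V/V_h,V_h)\cong M_{\eta\times(n-\eta)}(\F_q)$; this already forces the block shape of \eqref{eq:def gamma}. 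The determinant condition comes from the Moore determinant $\Delta=\det(v_i^{q^{j-1}})$ of a basis $v_1,\dots,v_n$ of $V$: one has $\sigma(\Delta)=\det(\sigma)\,\Delta$ for all $\sigma\in G_f$ and $\Delta^{q-1}=\pm a_0=\pm cu^k$, so a Kummer-theoretic analysis of $\F_q(t)(\Delta)/\F_q(t)$ and of its intersection with $\F_{q^N}(t)$ — this is where the hypothesis that $u$ is not a proper power is used — yields exactly $\det(\sigma)\in c^{\,i(\sigma)}\F_q^{\times k}$, where $D^{i(\sigma)}$ denotes the $V_h$-block of $\sigma$. Hence $G_f\subseteq\Gamma$.

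\emph{Reduction to trivial content.} Since $f$ is separable and $h\mid f$, we may write $f=\tilde f\circ h$ with $\tilde f$ a monic $q$-additive polynomial of degree $n-\eta$; a short computation (via Corollary~\ref{cor:content additive}) shows that $\con_t(f)=h$ forces $\con_t(\tilde f)=X$, while $a_0=\tilde a_0 h_0$. The assignment $\tilde f\mapsto\tilde f\circ h$ is an injective $\F_q$-linear map with image $\{f:h\mid f\}$ that matches the event $\{\con_t(f)=h,\ a_0=cu^k\}$ with $\{\con_t(\tilde f)=X,\ \tilde a_0=(c/h_0)u^k\}$; therefore, conditioned on $\con_t(f)=h$ and $a_0=cu^k$, the random $f$ has the same law as $\tilde f\circ h$ with $\tilde f$ uniform of degree $n-\eta$ and height $\le d$ conditioned on $\con_t(\tilde f)=X$ and $\tilde a_0=(c/h_0)u^k$. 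Under the $G_f$-module isomorphism $V/V_h\cong\tilde V$, where $\tilde V$ denotes the roots of $\tilde f$, the bottom quotient of $G_f$ is identified with $G_{\tilde f}$. By Theorem~\ref{thm:1} — the additional conditioning $\tilde a_0=(c/h_0)u^k$ is harmless since it has probability bounded away from $0$ even given $\con_t(\tilde f)=X$ — we get $G_{\tilde f}\supseteq\SL_{n-\eta}(q)$ a.a.s.; the Moore-determinant relation $\Delta_{\tilde f}^{q-1}=\pm\tilde a_0$ together with ``$u$ not a proper power'' then upgrades this to $G_{\tilde f}\supseteq\{B\in\GL_{n-\eta}(q):\det B\in\F_q^{\times k}\}$ a.a.s.

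\emph{From $\SL_{n-\eta}$ to $\Gamma$.} Assume the a.a.s.\ event above. Writing $\bar\Gamma=\Gamma/U$ for the ``linked Levi'', the image of $G_f$ in $\bar\Gamma$ surjects onto $\langle D\rangle$ and onto $G_{\tilde f}$ and (being inside $\bar\Gamma$) respects the determinant linkage; since $\SL_{n-\eta}(q)$ is perfect for $n$ large, a Goursat argument using the previous paragraph shows this image is all of $\bar\Gamma$. So it remains to prove $G_f\supseteq U$, and I claim a dichotomy: $G_f\cap U\in\{0,U\}$. Indeed $G_f\cap U$ is a $G_f$-submodule of $U=(V/V_h)^{*}\otimes_{\F_q}V_h$; since $(V/V_h)^{*}$ is an absolutely irreducible $\SL_{n-\eta}(q)$-module with $\mathrm{End}=\F_q$ on which the $V_h$-factor acts trivially, every $G_f$-submodule has the form $(V/V_h)^{*}\otimes W$ for some $W\subseteq V_h$; on the other hand, additive Hilbert 90 for the sequence $0\to V_h\to F^{\mathrm{sep}}\xrightarrow{h}F^{\mathrm{sep}}\to 0$ over $F:=\mathrm{spl}(\tilde f)\cdot\F_{q^N}(t)$ identifies $G_f\cap U=\mathrm{Gal}(\mathrm{spl}(f)/F)$, as a subspace of $U=\mathrm{Hom}(V/V_h,V_h)$, with $\{\phi:\phi(\tilde V\cap h(F))=0\}=(\tilde V\cap h(F))^{\perp}\otimes V_h$. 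Comparing the two shapes, $(\tilde V\cap h(F))^{\perp}$ must be $\SL_{n-\eta}(q)$-stable, hence $0$ or $(V/V_h)^{*}$, so $\tilde V\cap h(F)\in\{\tilde V,0\}$ and correspondingly $G_f\cap U\in\{0,U\}$.

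\emph{The main obstacle.} It therefore suffices to exclude $G_f\cap U=0$, i.e.\ $\tilde V\subseteq h(F)$, i.e.\ $\mathrm{spl}(f)=F$ — in other words, to show that solving $h(X)=\tilde v$ for the roots $\tilde v$ of $\tilde f$ genuinely enlarges $F$. This is the crux, and the place where the randomness of $\tilde a_1,\dots,\tilde a_{n-\eta-1}$ (beyond Theorem~\ref{thm:1}) and the fixed value of $\tilde a_0$ re-enter. I would rule it out by a local ramification analysis: with probability tending to $1$ one finds a place $w$ of $\F_q(t)$ (for instance dividing $u$, or a place where some $\tilde a_i$ has a low-order zero) at which the Newton polygon of $\tilde f$ produces a root $\tilde v$ with $\ord(\tilde v)$ not divisible by $q^{\eta}$ above $w$; then $h(X)=\tilde v$, whose leading term is $X^{q^{\eta}}$, is ramified over $F$ at $w$, so $\mathrm{spl}(f)\supsetneq F$ and $G_f\cap U\neq 0$. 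Combining all four steps gives $G_f=\Gamma_{n,h,c,k}$ asymptotically almost surely. I expect this ramification input, together with making the Kummer/Moore determinant computation of the $\det$-linkage fully precise (where ``$u$ not a proper power'' is essential), to be the most delicate parts of the argument.
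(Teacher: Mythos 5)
Your overall architecture — upper bound $G_f\subseteq\Gamma$, then lower bound via lower-right block, determinant linkage, and upper-right block — matches the paper's, but the way you handle the upper-right block (which the paper correctly identifies as the hardest part) is genuinely different, and that is exactly where your argument has a real gap.

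Your reduction $f=\tilde f\circ h$ (Ore right-division) is a nice idea that the paper does not use; it cleanly transports the conditional distribution $\{\con_t f=h,\ a_0=cu^k\}$ to $\{\con_t\tilde f=X,\ \tilde a_0=(c/h_0)u^k\}$, and I believe this can be made rigorous (the right-division by a monic $q$-additive polynomial with constant coefficients preserves the height bound $\le d$, by the triangular recursion relating the $\tilde a_i$ to the $a_m$). The Moore-determinant route to the determinant linkage is also plausible, though you only sketch the Kummer-theoretic step; the paper instead computes $\det$ of Frobenius elements directly via Proposition~\ref{prop: char poly}(i) and generates the image using a Weil-bound argument (Lemma~\ref{lem:weil}). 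These are genuinely different routes and both should work once filled in.

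The gap is in the upper-right block. Your dichotomy ``$G_f\cap U\in\{0,U\}$'' rests on two claims: (a) $G_f\cap U$ is a ``full box'' $(V/V_h)^*\otimes W$ by $\SL_{n-\eta}(q)$-invariance, and (b) $G_f\cap U$ is also a ``full box'' of the transposed shape $(\tilde V\cap h(F))^\perp\otimes V_h$ by an Artin--Schreier-type perfect pairing. Claim (b) is an actual theorem (perfectness of the Lang-isogeny/additive-Kummer pairing for $h$), but you state it without proof and it is not a one-line Hilbert~90 reduction — it deserves a real argument. More importantly, even granting the dichotomy, the whole proof hinges on excluding $G_f\cap U=0$, equivalently $\tilde V\subseteq h(F)$, equivalently $\mathrm{spl}(f)=F$; you explicitly write ``I would rule it out by a local ramification analysis'' and do not carry it out. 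This is not a minor loose end: it is the crux, and without it the lower bound $G_f\supseteq U$ is unproven. Note that a Newton-polygon argument here is not straightforward, because the places dividing $u$ may already be wildly ramified in $\mathrm{spl}(\tilde f)$, and the relevant valuations of the roots $\tilde v$ are controlled by the \emph{random} coefficients $\tilde a_1,\ldots,\tilde a_{n-\eta-1}$ in a way that requires its own probabilistic analysis.

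The paper avoids this dichotomy-plus-ramification route altogether. Its Proposition~\ref{prop:upper right corner} shows $G_f\supseteq U$ directly from two facts you already have at that point: $\rho(G_f)\supseteq\SL_{n-\eta}(q)$ and the absence of extra invariant subspaces (the latter supplied by the height bound of Proposition~\ref{prop:irred}). The mechanism is Pollatsek's vanishing $H^1(G,M_{1\times n}(\F_q))=0$ for $\SL_n\le G\le\GL_n$, $n\ge4$ (Lemma~\ref{lem:higman}), bootstrapped via Lemma~\ref{lem:higman cor}: if $G_f\cap U$ were a proper $\GL$-submodule of $U$, the vanishing of $H^1$ would let one conjugate $G_f$ into a form that fixes an extra subspace not contained in $V_h$, contradicting irreducibility. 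This is cleaner precisely because it turns the ``hard'' case $G_f\cap U=0$ into a violation of a fact you have already established, rather than requiring a new ramification input. I would suggest either adopting this cohomological argument or, if you want to keep the Galois-theoretic picture, at minimum writing out the ramification argument in full (including the probabilistic step ensuring a usable place $w$ exists) before the proposal can be considered complete.

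A small additional remark: your justification that conditioning on $\tilde a_0=(c/h_0)u^k$ is ``harmless'' because it ``has probability bounded away from $0$'' is fine as far as it goes, but the paper's Proposition~\ref{prop:lower right corner} is stated with $a_0$ fixed from the outset, which is the cleaner way to phrase what you need.
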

	
	Given $h,c,k,u$ as in Theorem~\ref{thm:2} it is easy to compute the asymptotic probability of the event $\con_t(f)=h,a_0=cu^k$ conditional on $a_0\neq 0$, and it is easy to show that these asymptotic probabilities over all $h,c,k,u$ sum to 1, using $u=t$ and $k=0$ in the case that $a_0 \in \FF_q^\times$. Hence a random $f$ with $a_0\neq 0$ falls outside the scope of Theorem~\ref{thm:2} with asymptotic probability 0. 
	Theorem \ref{thm:2} has the following
	
	\begin{corollary}\label{cor:irreduc} Let $f$ be a random $q$-additive polynomial as in Theorem \ref{thm:1}. Then
		$$\lim_{n\to\infty}\Prob(f/ \con_tf\mbox{ is irreducible})=1.$$
	\end{corollary}
	
	The corollary is trivial in the case $d=1$ ($f$ is linear in $t$), however the case $d>1$ seems to require almost the full force of Theorem \ref{thm:2}.
	
	\subsection{Outline of proofs and organization of the paper}
	The first key step in proving Theorem~\ref{thm:1}, is to show that under the assumption $\con_t(f)=X$, almost surely $G_f\leqslant GL_n(q)$ is an irreducible subgroup. This is equivalent to $f$ not having nontrivial $q$-additive divisors, which is accomplished by a height argument. The second key step is to consider the specialization $f_0(X)=f(0,X)$. It turns out that $G_f$ contains an element $\sigma$ (obtained as a Frobenius element over the prime ideal $t$) with characteristic polynomial $f_0$. Since $f_0$ is essentially uniform (in the set of degree $n$ monic polynomials in $\F_q[X]$), $G_f$ contains elements with "random" characteristic polynomials. However, by deep results in group theory, any irreducible $G\leqslant\GL_n(q)$ not containing $\SL_n(q)$ has characteristic polynomials with very specific structure that rules out most polynomials of degree $n$. Results of Garzoni and Eberhard \cite{eberhard2023conjugacy} and Garzoni and McKemmie \cite{Garzoni} based on bounds of Fulman and Guralnick \cite{FulmanGuralnick4,fulman2012bounds} play a central role in carrying out this argument. This step is the deepest part of our results.
	
	To obtain Theorem~\ref{thm:2}, we slightly refine the methods outlined above and use two main additional ingredients: the (well-known) connection between $\det(G_f)$ and the coefficient $a_0$ (for which we give a self-contained proof since we need a slightly refined statement) is used to fully capture the upper left and lower right blocks of $G_f$ in (\ref{eq:def gamma}). Some linear algebra and results of Higman \cite{Hig62} and Pollatsek \cite{Pol71} on the vanishing of certain group cohomologies allow us to capture the upper-right block of $G_f$ in (\ref{eq:def gamma}).
	
	The paper is organized as follows: in Section~\ref{sec:prelim}, we give the necessary background on additive polynomials. Our exposition is mostly (but not fully) self-contained and covers mostly (but not exclusively) well-known material, but for which a comprehensive source convenient for our applications is unavailable (as far as we know). In section~\ref{sect:irred} we prove that almost surely $G_f$ is almost irreducible, with all proper invariant subspaces contained in $Z(\con_t(f))$, which is typically small. In Section~\ref{sect:subgroups}, we prove a proposition implying that most polynomials $f_0\in\F_q[X],\deg f_0=n$ are not characteristic polynomials of elements contained in irreducible $G\leqslant\GL_n(q)$ not containing $\SL_n(q)$, mostly building on \cite{eberhard2023conjugacy,FulmanGuralnick4,fulman2012bounds}. In Section 5, we combine the tools developed in Sections~\ref{sect:irred} and \ref{sect:subgroups} to conclude the proof of Theorem~\ref{thm:1} and capture the lower right block of $G_f$ in Theorem~\ref{thm:2} up to the determinant. Then in Sections~\ref{sec:det} and \ref{sec:upper right block}, we develop the tools for capturing the diagonal blocks and upper right blocks of $G_f$ respectively. In Section~\ref{sec:assembly}, we assemble all the pieces from the previous sections to conclude the proof of Theorem~\ref{thm:2} and also derive Corollary \ref{cor:irreduc}. In Sections~\ref{sec:largebox} and \ref{sec:large q}, we discuss the large box and large $q$ models respectively. Finally, Appendices~\ref{sect:spec_fact} and \ref{sec:subgroups large q} contain auxiliary results which are likely to have other applications outside the scope of this paper. Appendix~\ref{sect:spec_fact} discusses some Bertini-type results that allow us to find specializations with certain prescribed characteristic polynomials needed in the large $q$ model. Appendix~\ref{sec:subgroups large q} counts characteristic polynomials of elements contained in irreducible $G\leqslant\GL_n(q)$ not containing $\SL_n(q)$ in the case of large $q$, mostly building on \cite{Garzoni,fulman2013number}. 
	
	{\bf Acknowledgments.} We would like to thank Daniele Garzoni for spotting a few errors in a previous draft of this paper and for other valuable comments and to Mihran Papikian for spotting another small error. We would also like to thank the anonymous referee of a previous draft of this paper for their suggestions for improving the exposition. The first author was partially supported by the Israel Science Foundation grant no. 702/19.
	The second author was partially supported by the Israel Science Foundation grant no. 2507/19.
	
	\section{Preliminaries on additive polynomials}
	\label{sec:prelim}
	
	In the present section, we review the background on additive polynomials that we will need in what follows. For a modern treatment of the Galois theory of additive polynomials based on the notion of Frobenius modules see \cite[\S V]{MaMa18}. A more elementary introduction to additive polynomials with a view towards applications to class field theory over function fields appears in \cite[\S 1]{Gos98}. Our treatment will be relatively elementary and self-contained. While most, though not all, of the results in the present section are well-known, we summarize them in a form that is convenient for our applications. 
	
	\subsection{$q$-additive polynomials}\label{sec:q-additive}
	Throughout the present section $p$ is a prime number and $q$ is a power of $p$. Let $F$ be a field of characteristic $p$. A polynomial $f\in F[X]$ is called \emph{$q$-additive} if it has the form 
	$$f=a_nX^{q^n}+a_{n-1}X^{q^{n-1}}+\ldots+a_1X^q+a_0X.$$
	Evidently, if $f$ is $q$-additive then
	\begin{equation}\label{eq:additivity}f(\alpha+\beta)=f(\alpha)+f(\beta),\quad f(a\alpha)=af(\alpha),\quad \alpha,\beta\in \overline F,a\in\F_q.\end{equation}
	Conversely, it is a simple exercise to show that if $f$ satisfies (\ref{eq:additivity}) then it is $q$-additive.
	
	\begin{notat*} For a polynomial $f\in F[X]$ we denote by
		$$Z(f)=\{\alpha\in\overline F:f(\alpha)=0\}$$ its set of roots in a fixed algebraic closure $\overline F$.\end{notat*}
	
	A characterization of $q$-additive polynomials in terms of their roots is given by the following
	
	\begin{prop}\label{prop:basic} Let $f=a_nX^{q^n}+a_{n-1}X^{q^{n-1}}+\ldots+a_0X\in F[X]$ be a $q$-additive polynomial.
		\begin{enumerate}
			\item $Z(f)$ is an $\F_q$-linear subspace of $\overline F$ of dimension $n$.
			\item  $f$ is separable if and only if $a_0\neq 0$.
			\item  If $g\in F[X]$ is separable and $Z(g)$ is an $\F_q$-linear subspace of $\overline F$ then $g$ is $q$-additive.
		\end{enumerate}
	\end{prop}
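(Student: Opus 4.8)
The plan is to handle the three parts in order, using the additivity relations~\eqref{eq:additivity} throughout. For part (i), the key observation is that $Z(f)$ is closed under addition and under scaling by $\F_q$, which is immediate from~\eqref{eq:additivity}; hence it is an $\F_q$-subspace of $\overline F$. To compute its dimension, I would separate into cases according to whether $f$ is separable. If $a_0\neq 0$, then $f'(X)=a_0\neq 0$ (all higher terms $a_iX^{q^i}$ have derivative $q^i a_i X^{q^i-1}=0$ in characteristic $p$), so $f$ is separable of degree $q^n$ and thus has exactly $q^n$ distinct roots, giving $\dim_{\F_q}Z(f)=n$. If $a_0=\cdots=a_{j-1}=0$ but $a_j\neq 0$, write $f(X)=g(X^{q^j})$ where $g$ is $q$-additive with nonzero linear coefficient; then $Z(f)$ is the preimage under $X\mapsto X^{q^j}$ (a purely inseparable bijection on $\overline F$) of $Z(g)$, and since this map is an $\F_q$-linear bijection of $\overline F$, we get $\dim_{\F_q}Z(f)=\dim_{\F_q}Z(g)=n-j$. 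Strictly speaking, to match the statement as written one should either assume $a_0\neq 0$ (so that $\deg f=q^n$ and $f$ is separable) or interpret the dimension count accordingly; I expect the paper intends the separable case here, consistent with part (ii).

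Part (ii) is essentially the derivative computation already used above: $f'(X)=a_0$ identically, since every other term $a_iX^{q^i}$ with $i\geq 1$ has vanishing derivative in characteristic $p$. Therefore $\gcd(f,f')=1$ precisely when $a_0\neq 0$, which is the criterion for separability. This direction is short and I would just record it.

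For part (iii), suppose $g\in F[X]$ is separable with $V:=Z(g)$ an $\F_q$-subspace of $\overline F$ of some dimension $m$, and (after scaling) $g$ monic. Since $g$ is separable, $|V|=\deg g=q^m$, so $V$ is finite of dimension $m$. The natural candidate is the $q$-additive polynomial
\[
\widetilde g(X)=\prod_{v\in V}(X-v),
\]
and the plan is to show $\widetilde g$ is indeed $q$-additive and equals $g$. That $\widetilde g$ has coefficients in $F$ follows because $V$ is $\Gal(\overline F/F)$-stable (as $g\in F[X]$) so its elementary symmetric functions lie in $F$. To see $\widetilde g$ is $q$-additive, one can argue directly: for fixed $\alpha\in\overline F$, the polynomial $X\mapsto \widetilde g(X+\alpha)-\widetilde g(X)-\widetilde g(\alpha)$ has degree $<q^m$ in $X$ but vanishes at all $q^m$ points of $V$ (using that $V+\alpha$ is a coset, so $\widetilde g(X+\alpha)$ and $\widetilde g(X)$ have the same roots shifted), hence is identically zero; similarly for scaling by $\F_q$. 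Alternatively, and perhaps more cleanly, one shows by induction on $m$ that $\prod_{v\in V}(X-v)$ is $q$-additive: pick a line $\F_q w\subset V$, let $V=W\oplus \F_q w$, note $\prod_{v\in V}(X-v)=\prod_{u\in W}\big(\prod_{c\in\F_q}(X-u-cw)\big)$, and use that $\prod_{c\in\F_q}(Y-cw)=Y^q-w^{q-1}Y$ is $q$-additive in $Y$ together with the inductive hypothesis and the fact that a composition of $q$-additive polynomials is $q$-additive. Finally, since $g$ and $\widetilde g$ are both monic of degree $q^m$ with the same root set $V$ and $g$ is separable, $g=\widetilde g$, so $g$ is $q$-additive.

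The main obstacle is part (iii): one must produce the $q$-additivity of $\prod_{v\in V}(X-v)$ from scratch, and care is needed to ensure the coefficients stay in $F$ rather than merely in $\overline F$ — this is where the Galois-stability of $V$ (equivalently, $g\in F[X]$) is used. Parts (i) and (ii) are routine once the characteristic-$p$ derivative vanishing is noted.
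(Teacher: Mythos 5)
Your treatment of (i) and (ii) matches the paper exactly: $\F_q$-linearity of $Z(f)$ falls out of \eqref{eq:additivity}, and $f'=a_0$ settles separability. You are also right to flag the imprecision in (i): as literally stated the dimension claim fails when $a_0=0$ (e.g.\ $f=X^{q^2}$ has $Z(f)=\{0\}$), and the statement must be read with the separability hypothesis $a_0\neq 0$ in force, as it is everywhere the proposition is applied in the paper. The paper does not spell this out, so noting it is a genuine improvement in rigor.

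For (iii) you diverge from the paper, which simply cites \cite[Corollary 1.2.2]{Gos98}, whereas you give a self-contained proof. Your primary argument is correct: setting $\widetilde g(X)=\prod_{v\in V}(X-v)$, one has $\widetilde g(X+w)=\widetilde g(X)$ for every $w\in V$ (reindexing the product over the coset), so for each fixed $\alpha$ the polynomial $\widetilde g(X+\alpha)-\widetilde g(X)-\widetilde g(\alpha)$ vanishes on all $q^m$ elements of $V$ while having degree $<q^m$ (the top terms cancel), hence is identically zero; $\F_q$-homogeneity is analogous. Combined with Galois-stability of $V$ to get coefficients in $F$, and with the converse implication of \eqref{eq:additivity} already asserted in the paper, this closes (iii). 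One small slip in your alternative inductive route: after writing $V=W\oplus\F_q w$ and setting $g_W=\prod_{u\in W}(X-u)$, the correct factorization is $\prod_{v\in V}(X-v)=\prod_{c\in\F_q}\bigl(g_W(X)-c\,g_W(w)\bigr)=g_W(X)^q-g_W(w)^{q-1}g_W(X)$, i.e.\ the outer polynomial is $Y^q-g_W(w)^{q-1}Y$ with $g_W(w)$, not $w$, as the constant; the form you wrote, $\prod_{u\in W}\prod_{c}(X-u-cw)$, does not directly present itself as a composition. This is a local bookkeeping error, not a conceptual one, and your degree-counting argument already carries the proof.
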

	
	\begin{proof}\textit{(i)} is immediate from (\ref{eq:additivity}), \textit{(ii)} from the fact that $f'=a_0$. For the proof of \textit{(iii)} see \cite[Corollary 1.2.2]{Gos98}.\end{proof}
	
	\begin{lem}\label{lem:gcd} Let $f,g\in F[X]$ be $q$-additive polynomials. Then $\gcd(f,g)$ is $q$-additive.\end{lem}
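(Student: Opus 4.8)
The plan is to deduce the statement from part (iii) of Proposition~\ref{prop:basic}, once we understand the zero set of $h:=\gcd(f,g)$. Write $h$ for the (monic) gcd computed in $F[X]$. Since $f$ is separable and $h\mid f$, the polynomial $h$ is itself separable; moreover $h$ is nonconstant, because $X\mid f$ and $X\mid g$ (both $f,g$ have vanishing constant term, being $q$-additive and separable, so $0\in Z(f)\cap Z(g)$), hence $X\mid h$. Thus it suffices to show that $Z(h)$ is an $\F_q$-linear subspace of $\overline F$, and then invoke Proposition~\ref{prop:basic}(iii).

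The key step is the identification $Z(h)=Z(f)\cap Z(g)$. First recall that the gcd is insensitive to field extension: the monic gcd of $f$ and $g$ in $\overline F[X]$ coincides with $h$ (this follows from the Euclidean algorithm, whose steps take place already over $F$). Next, since $f$ is separable it factors over $\overline F$ as a product of distinct linear factors $X-\alpha$, $\alpha\in Z(f)$, and similarly for $g$; therefore their monic gcd over $\overline F$ is $\prod_{\alpha\in Z(f)\cap Z(g)}(X-\alpha)$, giving $Z(h)=Z(f)\cap Z(g)$ as claimed. Now by Proposition~\ref{prop:basic}(i), both $Z(f)$ and $Z(g)$ are $\F_q$-linear subspaces of $\overline F$, so their intersection $Z(h)$ is an $\F_q$-linear subspace as well.

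Having shown that $h$ is separable with $Z(h)$ an $\F_q$-linear subspace of $\overline F$, Proposition~\ref{prop:basic}(iii) immediately yields that $h$ is $q$-additive, completing the argument. I do not anticipate a genuine obstacle here: the only points that require a word of care are the stability of the gcd under the extension $F\subseteq\overline F$ and the reduction to the separable, nonconstant case (so that ``set of roots'' faithfully encodes $h$), both of which are routine. One could also give a more computational alternative — writing $h=uf+vg$ with $u,v\in F[X]$ and checking the additivity identities \eqref{eq:additivity} directly on $Z(h)$ — but the root-set argument above is cleaner and uses only what has already been established.
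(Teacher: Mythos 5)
Your proof is correct and takes the same approach as the paper: identify $Z(\gcd(f,g))=Z(f)\cap Z(g)$, note this is an $\F_q$-linear subspace by Proposition~\ref{prop:basic}(i), and then apply Proposition~\ref{prop:basic}(iii). The paper's version is terser; you spell out the (implicitly needed) facts that $\gcd(f,g)$ is separable and nonconstant and that the gcd is stable under passage to $\overline{F}$, which is a reasonable addition but not a different route.
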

	
	\begin{proof} First assume that one of $f,g$ is separable, say $f$. By Proposition~\ref{prop:basic}(i), $Z(f),Z(g)$ are $\F_q$-linear subspaces of $\overline F$ and hence $Z(\gcd(f,g))=Z(f)\cap Z(g)$ is also an $\F_q$-linear subspace. By Proposition~\ref{prop:basic}(iii), $\gcd(f,g)$ is $q$-additive.
		
		If $f,g$ are inseparable, we may write $f(X)=f_1(X^{q^k}),\,g(X)=g_1(X^{q^k})$ with $f_1,g_1$ being $q$-additive and separable and $k\ge 1$. Then by the separable case $\gcd(f,g)=\gcd(f_1,g_1)(X^{q^k})$ is $q$-additive.\end{proof}
	
	If $k$ is a field, the \emph{content} of a bivariate polynomial $f\in k[t,X]=\sum_{i=0}^d c_i(X)t^i,\,c_i\in k[X]$ is defined to be
	$$\con_tf=\gcd(c_0,\ldots,c_d)\in k[X],$$
	where we always take the monic representative of the greatest common divisor.
	
	\begin{corollary}\label{cor:content additive}
		Let $k$ be a field with $\chr k=p$, $F=k(t)$ the univariate rational function field, and $f\in k[t,X]\subset F[X]$ a $q$-additive polynomial in the variable $X$. Then $\con_tf$ is $q$-additive.
	\end{corollary}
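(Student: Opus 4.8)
The plan is to reduce the claim to Lemma~\ref{lem:gcd} via a standard content/Gauss-type argument. Write $f=\sum_{i=0}^d c_i(X)\,t^i$ with $c_i\in k[X]$, so that $\con_t f=\gcd(c_0,\ldots,c_d)$. The first step is to observe that each individual coefficient $c_i$ is itself a $q$-additive polynomial in $X$: since $f$ is $q$-additive it has the shape $f=\sum_j a_j(t)X^{q^j}$ with $a_j\in k[t]$, and collecting powers of $t$ shows that each $c_i(X)=\sum_j a_{j,i}X^{q^j}$, where $a_{j,i}$ is the coefficient of $t^i$ in $a_j$; only exponents of the form $q^j$ appear, so $c_i$ is $q$-additive.

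The second step handles separability. Lemma~\ref{lem:gcd} as stated requires the polynomials to be \emph{separable} $q$-additive, i.e.\ to have nonzero linear term (Proposition~\ref{prop:basic}(ii)); but some of the $c_i$ may fail this. To get around it, note $\con_t f=\gcd(c_0,\ldots,c_d)$ and that for any $q$-additive $g=\sum_j b_jX^{q^j}$ we can factor $g=\tilde g(X^{q^m})$ where $q^m$ is the smallest power of $q$ with $b_m\neq 0$ and $\tilde g$ is separable $q$-additive (its linear coefficient is $b_m\neq 0$); note $X^{q^m}$ is itself $q$-additive. So each nonzero $c_i$ is a composition $X^{q^{m_i}}\circ(\text{separable }q\text{-additive})$. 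The cleanest route is: let $q^m$ be the minimal such exponent occurring among all nonzero $c_i$, factor $X^{q^m}$ out of $f$ to write $f(X)=\hat f(X^{q^m})$ with $\hat f$ a $q$-additive polynomial having at least one coefficient $c_i$ with nonzero linear term. One checks $\con_t f=(\con_t\hat f)(X^{q^m})=(\con_t\hat f)\circ X^{q^m}$, so it suffices to prove the statement for $\hat f$, i.e.\ we may assume some $c_i$ is separable.

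The third step is the induction. Among $c_0,\ldots,c_d$ pick one, say $c_{i_0}$, that is separable $q$-additive (nonzero linear term). Then $\gcd(c_{i_0},c_j)$ is $q$-additive for every $j$: if $c_j$ is also separable this is immediate from Lemma~\ref{lem:gcd}; if $c_j=\tilde c_j(X^{q^{m_j}})$ with $m_j\ge 1$, then since $c_{i_0}$ has nonzero linear term, $\gcd(c_{i_0},c_j)$ has nonzero linear term too (it divides $c_{i_0}$, whose derivative is a nonzero constant, so any divisor is separable), hence it is a separable $q$-additive divisor and we can apply Lemma~\ref{lem:gcd} to $c_{i_0}$ and any separable $q$-additive multiple of $c_j$ — or, more simply, observe directly that $Z(\gcd(c_{i_0},c_j))=Z(c_{i_0})\cap Z(c_j)$ is an $\F_q$-subspace (both $Z(c_{i_0})$ and $Z(c_j)$ are $\F_q$-subspaces by Proposition~\ref{prop:basic}(i), regardless of separability) and that this gcd is separable, so Proposition~\ref{prop:basic}(iii) applies. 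Iterating, $\con_t f=\gcd(c_{i_0},\gcd_{j\ne i_0}c_j)$ is separable $q$-additive. Undoing the substitution $X\mapsto X^{q^m}$ from the second step gives that $\con_t f$ in general is $q$-additive (a separable $q$-additive polynomial composed with $X^{q^m}$, which is $q$-additive, is $q$-additive by inspection of exponents).

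I expect the only real subtlety to be the \textbf{separability bookkeeping}: Lemma~\ref{lem:gcd} is stated for separable inputs, whereas the $c_i$ and their gcd need not be separable a priori, so one must either peel off a power-of-Frobenius factor $X^{q^m}$ as above, or — more robustly — reprove the gcd fact without the separability hypothesis by arguing directly with root spaces: $Z(c_i)$ is always an $\F_q$-subspace by Proposition~\ref{prop:basic}(i), so $Z(\con_t f)=\bigcap_i Z(\con_t f \text{ as a divisor})$ is an $\F_q$-subspace, and then one needs a version of Proposition~\ref{prop:basic}(iii) valid in the inseparable case (equivalently, the statement that a monic polynomial whose root \emph{scheme} is an $\F_q$-subgroup scheme is $q$-additive). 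Given the intended level of generality I would just use the $X^{q^m}$-reduction to stay within the separable setting already developed.
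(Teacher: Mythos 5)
Your proof is correct and follows the same basic route as the paper: decompose $f=\sum_{i=0}^d c_i(X)\,t^i$, observe that each $c_i$ is $q$-additive, and identify $\con_t f=\gcd(c_0,\ldots,c_d)$ as a gcd of $q$-additive polynomials. The difference is in rigor rather than strategy. The paper's own proof is a one-liner that simply cites Lemma~\ref{lem:gcd} for $\gcd(c_0,\ldots,c_d)$; but as you correctly flag, Lemma~\ref{lem:gcd} as stated requires \emph{separable} $q$-additive inputs (its proof goes through $Z(\cdot)$ together with Proposition~\ref{prop:basic}(iii), which is a separability statement), and the $c_i$ need not be separable -- indeed some of them can be pure $q^m$-th-power polynomials. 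So the paper's proof, read literally, has a small gap that your argument genuinely patches rather than merely over-explains. Your two fixes are both valid: peel off the common $X^{q^m}$ factor (using $\gcd\bigl(\hat c_i(X^{q^m})\bigr)=\bigl(\gcd(\hat c_i)\bigr)(X^{q^m})$, which holds by the usual Bezout/divisibility argument) to reduce to a situation where some $c_{i_0}$ is separable; or, once some $c_{i_0}$ is separable, note that $\gcd(c_0,\ldots,c_d)$ divides $c_{i_0}$ and so is itself separable, whence $Z(\gcd)=\bigcap_i Z(c_i)$ being an $\F_q$-subspace (each $Z(c_i)$ is, with no separability needed) and Proposition~\ref{prop:basic}(iii) finish the job in one step without iterating through pairs. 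One minor simplification: your third step does not actually need pairwise iteration; the direct argument just described is shorter and avoids the slightly awkward aside about ``separable $q$-additive multiples of $c_j$.'' The closing remark about reproving the gcd lemma via root \emph{schemes} is a sound instinct but unnecessary here; the $X^{q^m}$-reduction keeps everything within the separable toolkit already available.
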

	
	\begin{proof} If we write $f=\sum_{i=0}^d c_i(X)t^i$ then clearly $c_i(X)$ are $q$-additive and by Lemma~\ref{lem:gcd} so is $\con_tf=\gcd(c_0,\ldots,c_d)$.
	\end{proof}
	
	Now, we make the additional assumption that $\F_q\subseteq F$ and let $f=X^{q^n}+a_{n-1}X^{q^{n-1}}+\ldots+a_0X,\,a_0\neq 0$ be monic, $q$-additive, and separable. Let $N= F(Z(f))$ be the splitting field of $f$ and denote by 
	$G_f=\Gal(N/F)$ the Galois group of $f$. Then, $G_f$ acts $\F_q$-linearly on the set of roots $Z(f)$, which is an $\F_q$-linear space of dimension $n$ (Proposition~\ref{prop:basic}(i)), since $G_f$ acts by field automorphisms fixing $\F_q\subseteq F$. 
	We identify $G_f$ with its image under the inclusion  $G_f\hookrightarrow \GL(Z(f))\cong\GL_n(q)$.
	
	\begin{lem}\label{lem:factor_inv_subspace} Under the assumptions of the above paragraph, there is a bijective correspondence 
		$$g\mapsto Z(g),\quad W\mapsto\prod_{\alpha\in W}(X-\alpha)$$ between monic $q$-additive factors $g\in F[X]$ of $f$ and $G_f$-invariant $\F_q$-linear subspaces  $W\subseteq Z(f)$.\end{lem}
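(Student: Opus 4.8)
The plan is to establish the correspondence in two directions and check they are mutually inverse. First I would verify that the two maps are well-defined. For the map $g\mapsto Z(g)$: if $g\in F[X]$ is a monic $q$-additive factor of $f$, then $Z(g)\subseteq Z(f)$, and $Z(g)$ is an $\F_q$-linear subspace by Proposition~\ref{prop:basic}(i) (note $g$ is automatically separable, being a factor of the separable polynomial $f$). It is $G_f$-invariant because $g$ has coefficients in $F$, so any $\tau\in G_f$ permutes $Z(g)$. For the map $W\mapsto g_W:=\prod_{\alpha\in W}(X-\alpha)$: if $W\subseteq Z(f)$ is a $G_f$-invariant subspace, then $g_W$ is fixed by every $\tau\in G_f$ (as $\tau$ permutes the linear factors), hence $g_W\in F[X]$ by the fundamental theorem of Galois theory; it is monic by construction, separable since its roots $W\subseteq Z(f)$ are distinct, and it is $q$-additive by Proposition~\ref{prop:basic}(iii) since $Z(g_W)=W$ is an $\F_q$-linear subspace. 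Finally $g_W\mid f$: since $W\subseteq Z(f)$ and both polynomials are separable with $g_W$ splitting completely, $g_W$ divides $f$ in $\overline F[X]$, and since both lie in $F[X]$ the division is over $F$ as well.

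Next I would check the two maps are inverse to each other. Starting from a $G_f$-invariant subspace $W$, we have $Z(g_W)=W$ directly from the definition of $g_W$, so one composite is the identity. Starting from a monic $q$-additive factor $g$ of $f$, I must show $\prod_{\alpha\in Z(g)}(X-\alpha)=g$. Since $g$ divides the separable polynomial $f$, it is itself separable, so $g$ splits into distinct linear factors over $\overline F$, namely $g=c\prod_{\alpha\in Z(g)}(X-\alpha)$ for some constant $c$; as $g$ is monic, $c=1$. This closes the loop.

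None of the steps presents a genuine obstacle — this is essentially an unwinding of the Galois correspondence combined with the characterization of $q$-additive polynomials already proved in Proposition~\ref{prop:basic}. The one point requiring a little care is the separability bookkeeping: one must note at each stage that factors and subspace-products inherit separability from $f$, which is exactly what lets us pass freely between a $q$-additive polynomial and its root set. Another small point worth stating explicitly is that a $q$-additive factor of $f$ automatically has zero as a root (since $X\mid g$ whenever $g$ is $q$-additive and nonconstant), matching the fact that $0\in W$ for any linear subspace $W$; this ensures the degree/dimension counts line up, with $\deg g=q^{\dim Z(g)}$.
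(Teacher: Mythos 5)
Your proposal is correct and takes essentially the same route as the paper: Galois-invariance of $Z(g)$ in one direction, and Galois-fixedness of the coefficients of $\prod_{\alpha\in W}(X-\alpha)$ together with Proposition~\ref{prop:basic}(iii) in the other. You supply a few verifications the paper leaves implicit (mutual inverseness, $g_W\mid f$, the degree/dimension bookkeeping), but the argument is the same.
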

	
	\begin{proof}
		If $g\in F[X]$ is a monic $q$-additive factor of $f$ then it is separable (because $f$ is separable). As $g\in F[X]$,  $G_f$ preserves $Z(g)$, and hence $Z(g)\subset Z(f)$ is a $G_f$-invariant subspace. Conversely, if $W\subset Z(f)$ is a $G_f$-invariant subspace then each $\sigma\in G_f$ permutes $W$, hence the coefficients of $g(X)=\prod_{\alpha\in W}(X-\alpha)$ are fixed by  $\sigma$. By the Galois correspondence, $g\in F[X]$. By Proposition~\ref{prop:basic}(iii), $g$ is $q$-additive.\end{proof}

	\subsection{Additive polynomials over finite fields}\label{sec:add pol finite}
	Let $q$ be a prime power and consider a $q$-additive polynomial
	$$f=X^{q^n}+a_{n-1}X^{q^{n-1}}+\ldots+a_0X,\quad a_i\in\F_{q^r},\quad a_0\neq 0.$$
	In what follows an important role will be played by the \emph{associated polynomial}:
	\begin{equation}\label{eq:def tilde}\tilde f=X^n+a_{n-1}X^{n-1}+\ldots+a_1X+a_0\in\F_{q^r}[X]\end{equation} and its companion matrix
	\begin{equation}\label{eq:def comp}
		D=\left[\begin{array}{cccc}&&&-a_0\\1&&&-a_1\\&\ddots&&\vdots \\&&1&-a_{n-1}\end{array}\right]\in\GL_n(\F_{q^r}).
	\end{equation}
	
	The Frobenius map $\Fr_{q^r}\colon x\mapsto x^{q^r}$ acts $\F_q$-linearly on the $\F_q$-linear space $Z(f)$ and it generates the Galois group $G_f=\Gal(\F_{q^r}(Z(f))/\F_{q^r})$. The following proposition gives an explicit description of a matrix $C$ representing the action  of $\Fr_{q^r}$ acting on $Z(f)$.
	
	\begin{proposition} \label{prop:frob_finite} 
		Let $f$, $\tilde{f}$, and $D$ be as above.  For a matrix $A\in \GL_n(\overline{\F_q})$,  denote by $A^{(q^i)}$ the result of applying $\Fr_{q^i}$ to each entry of $A$. Then:
		\begin{enumerate}
			\item There exists a basis
			of $Z(f)$ as an $\F_q$-vector space such that the matrix $C\in \GL_n(q)$ representing $\Fr_{q^r}|_{Z(f)}$  is conjugate over $\overline{\F_{q}}$ to
			$$B=D\cdot D^{(q)}\cdot D^{(q^2)}\cdots D^{(q^{r-1})}\in\GL_n(\F_{q^r}).$$
			\item Assume $r=1$. Then, we may choose the basis in (i) such that $C=D$ and the characteristic polynomial of $C$ is $\tilde f$.
			\item The determinant of $\Fr_{q^r}$ acting on $Z(f)$ is $(-1)^{rn}N_{\F_{q^r}/\F_q}(a_0)$, where $N_{\F_{q^r}/\F_q}$ is the norm map.
		\end{enumerate}
	\end{proposition}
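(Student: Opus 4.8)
The plan is to realize $Z(f)$ explicitly inside $\overline{\F_q}^{\,n}$ via its $q$-power Frobenius orbits, read the action of $\Fr_{q^r}$ off that picture, and deduce (i) and (iii) from a single computation, with a short separate argument for (ii). Concretely, I would introduce the $\F_q$-linear map $\iota\colon Z(f)\to\overline{\F_q}^{\,n}$, $\alpha\mapsto(\alpha,\alpha^q,\ldots,\alpha^{q^{n-1}})$, which is well defined and injective (look at the first coordinate) because $x\mapsto x^{q^i}$ is additive and fixes $\F_q$. Writing $v^{(q)}$ for the entrywise $q$-power of a vector $v$, the defining relation $\alpha^{q^n}=-\sum_{i=0}^{n-1}a_i\alpha^{q^i}$ shows directly that every $v$ in the image $W:=\iota(Z(f))$ satisfies $v^{(q)}=D^{\top}v$. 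Iterating this identity on $W$, and using that transposition commutes with entrywise Frobenius, one gets $v^{(q^r)}=(D^{(q^{r-1})})^{\top}\cdots(D^{(q)})^{\top}D^{\top}v=B^{\top}v$; since $\iota(\alpha^{q^r})=\iota(\alpha)^{(q^r)}$, the map $\iota$ intertwines $\Fr_{q^r}|_{Z(f)}$ with multiplication by $B^{\top}$ on $W$.

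For (i), fix an $\F_q$-basis $\alpha_1,\ldots,\alpha_n$ of $Z(f)$ (it has $\F_q$-dimension $n$ by Proposition~\ref{prop:basic}(i)) and let $C\in\GL_n(q)$ be the matrix of $\Fr_{q^r}|_{Z(f)}$ in this basis. The crucial input — and the step I expect to need the most care — is the descent statement that $\iota(\alpha_1),\ldots,\iota(\alpha_n)$ form an $\overline{\F_q}$-basis of $\overline{\F_q}^{\,n}$; this is exactly the non-vanishing of the Moore determinant $\det(\alpha_i^{q^{j-1}})$ for $\F_q$-linearly independent $\alpha_i$, and can alternatively be obtained by a short minimal-length-relation argument (apply $\Fr_q$ to a shortest $\overline{\F_q}$-linear relation among the $\iota(\alpha_i)$, use invertibility of $D^{\top}$ to produce a shorter one, and conclude the coefficients lie in $\F_q$). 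Granting this, the intertwining relation says precisely that $C$ is also the matrix of the $\overline{\F_q}$-linear map $v\mapsto B^{\top}v$ in the basis $\iota(\alpha_1),\ldots,\iota(\alpha_n)$, so $C$ is conjugate over $\overline{\F_q}$ to $B^{\top}$, hence to $B$ (a matrix is always conjugate to its transpose). Part (iii) then follows formally: $\det C=\det B=\prod_{i=0}^{r-1}\det(D^{(q^i)})=\prod_{i=0}^{r-1}(\det D)^{q^i}=N_{\F_{q^r}/\F_q}(\det D)$, and expanding $D$ along its first row gives $\det D=(-1)^n a_0$; since $1+q+\cdots+q^{r-1}\equiv r\pmod{2}$ when $q$ is odd, and signs are vacuous in characteristic $2$, this equals $(-1)^{rn}N_{\F_{q^r}/\F_q}(a_0)$.

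Part (ii) needs a separate argument, because there the conclusion $C=D$ holds over $\F_q$, not merely up to $\overline{\F_q}$-conjugacy. When $r=1$ the coefficients $a_i$ lie in $\F_q$, so $f(\alpha^q)=f(\alpha)^q$ and $\Fr_q$ restricts to an $\F_q$-linear automorphism of $Z(f)$; this makes $Z(f)$ a module over $\F_q[X]$ via $X\mapsto\Fr_q$, annihilated by $\tilde f$ because $\tilde f(\Fr_q)\alpha=f(\alpha)$. If the minimal polynomial $m\mid\tilde f$ had degree $<n$, then $m(0)\neq 0$ (since $\tilde f(0)=a_0\neq 0$), so the associated $q$-additive polynomial $\hat m$ is separable of $q$-degree $\deg m$ and $Z(f)\subseteq Z(\hat m)$ would force $n=\dim_{\F_q}Z(f)\le\deg m<n$, a contradiction. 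Hence $m=\tilde f$ has degree $n=\dim_{\F_q}Z(f)$, so by the structure theorem $Z(f)$ is a cyclic $\F_q[X]$-module; in a basis $\alpha,\alpha^q,\ldots,\alpha^{q^{n-1}}$ coming from a cyclic generator, $\Fr_q$ is represented by the companion matrix of $\tilde f$, which is $D$, and its characteristic polynomial is $\tilde f$. Apart from the Moore-determinant/descent input to (i), every step above is routine bookkeeping with companion matrices and Frobenius twists.
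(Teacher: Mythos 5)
Your parts (i) and (iii) are essentially the paper's argument: both hinge on the Moore vectors $(\alpha,\alpha^{q},\ldots,\alpha^{q^{n-1}})$, the twist relation $A^{(q)}=D^{\top}A$ iterated $r$ times, and the non-vanishing of the Moore determinant (the paper cites \cite[Lemma~1.3.3]{Gos98} for exactly this). Your packaging as an intertwining map $\iota$ rather than explicit matrix identities is cosmetic, and your sign bookkeeping for (iii) is equivalent to the paper's.

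Part (ii) is where you genuinely diverge, and your route is arguably cleaner. The paper deduces from (i) that $C$ and $D$ are $\overline{\F_q}$-conjugate and then invokes rational-canonical-form theory to upgrade this to $\F_q$-conjugacy, whence they share a characteristic polynomial. You instead argue directly that $Z(f)$ is a cyclic $\F_q[X]$-module under $X\mapsto\Fr_q$: the module is annihilated by $\tilde f$, and if the minimal polynomial $m$ had degree $<n$ then $m(0)\neq 0$ (since $m\mid\tilde f$ and $\tilde f(0)=a_0\neq 0$), so the associated $q$-additive $\hat m$ is separable with $Z(f)\subseteq Z(\hat m)$, contradicting $\dim_{\F_q}Z(f)=n>\deg m=\dim_{\F_q}Z(\hat m)$. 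Hence $m=\tilde f$ has degree $n=\dim_{\F_q}Z(f)$, forcing cyclicity, and the companion-matrix basis gives $C=D$ on the nose. This is self-contained, does not rely on part (i) at all, and avoids the ``conjugacy descends to the base field'' input the paper takes as black box. Both arguments are correct; yours trades an external reference for a short module-theoretic computation.
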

	
	\begin{proof} \textit{(i)}
		Let $\alpha_1,\ldots,\alpha_n$ be an $\F_q$-basis of $Z(f)$. Then we have
		$$
		(\alpha_1^{q^r}\,\alpha_2^{q^r}\,\cdots\,\alpha_n^{q^r}) =
		(\alpha_1\,\cdots\,\alpha_n)C
		$$ 
		for the matrix $C\in\GL_n(q)$ of $\Fr_{q^r}$ acting on $Z(f)$ (with respect to the basis $\alpha_1,\ldots\alpha_n$). Applying $\Fr_q$ iteratively we obtain $$(\alpha_1^{q^{r+i}}\,\cdots\,\alpha_n^{q^{r+i}}) =
		(\alpha_1^{q^i}\,\cdots\,\alpha_n^{q^i})C,\quad i\in\ZZ_{\ge 0}.$$
		Combining these relations for $i=0,\ldots,n-1$, we obtain the relation
		\begin{equation}\label{eq:moore1}
			\left[\begin{array}{ccc}\alpha_1^{q^r}&\cdots&\alpha_n^{q^r}\\ \alpha_1^{q^{r+1}} &\cdots& \alpha_n^{q^{r+1}} \\ &\vdots& \\ \alpha_1^{q^{r+n-1}}&\cdots&\alpha_n^{q^{r+n-1}}\end{array}\right]=\left[\begin{array}{ccc}\alpha_1&\cdots&\alpha_n\\ \alpha_1^{q} &\cdots& \alpha_n^{q} \\ &\vdots& \\ \alpha_1^{q^{n-1}}&\cdots&\alpha_n^{q^{n-1}}\end{array}\right]C,
		\end{equation}
		equivalently $C=A^{-1}A^{(q^r)}$ where $A\in\GL_n(\overline{\F_{q}})$ is the matrix appearing on the RHS of (\ref{eq:moore1}). Since $\alpha_1,\ldots,\alpha_n$ are linearly independent over $\F_q$, the matrix $A$ is invertible by \cite[Lemma~1.3.3]{Gos98}.
		
		We have $f(\alpha_i)=0$, hence  $\alpha_i^{q^n}=-a_{n-1}\alpha_i^{q^{n-1}}-\ldots-a_0\alpha_i$. Therefore
		$$\left[\begin{array}{ccc}\alpha_1^q&\cdots&\alpha_n^q\\ 
			\alpha_1^{q^2}&\cdots&\alpha_n^{q^2}\\ &\vdots& \\ 
			\alpha_1^{q^n}&\cdots&\alpha_n^{q^n}\end{array}\right]=
		\left[\begin{array}{cccc} 
			&1&&\\&&\ddots&\\&&&1\\-a_0&-a_1&\cdots&-a_{n-1}\end{array}\right]\left[\begin{array}{ccc}\alpha_1&\cdots&\alpha_n\\ 
			\alpha_1^{q}&\cdots&\alpha_n^{q}\\ &\vdots& \\ 
			\alpha_1^{q^{n-1}}&\cdots&\alpha_n^{q^{n-1}}\end{array}\right],$$
		or in compact form $A^{(q)}=D^TA$. Applying $\Fr_q$ iteratively to this relation, we obtain
		$$A^{(q^r)}=\left(D^{(q^{r-1})}\right)^T\cdots\left(D^{(q)}\right)^TD^TA=B^TA,$$ i.e. $B^T=A^{(q^r)}A^{-1}$. But $B\sim B^T = AA^{-1}A^{(q^r)}A^{-1} =ACA^{-1}\sim C$ ($\sim$ denoting conjugacy over $\overline{\F_q}$), as required.

		\textit{(ii)} By (i), since $r=1$,  the matrix $C$ is conjugate to $D$ over $\overline{\F_{q}}$. Since $C,D\in\GL_n(q)$, by the theory of the rational normal form \cite[\S 12.2]{DuFo04} the matrices $C,D$ are conjugate over $\F_q$. In particular $C$ has the same characteristic polynomial as $D$, which is $\tilde f$.

		\textit{(iii)} By (i) and the multiplicativity of the determinant, we have 
		$$\det B=\prod_{i=0}^{r-1}\det D^{(q^i)}=\prod_{i=0}^{r-1}(-1)^na_0^{q^i}=
		(-1)^{rn}N_{\F_{q^r}/\F_q}(a_0),$$
		as needed.
	\end{proof}
	
	We conclude this subsection with a refinement of Lemma~\ref{lem:gcd} in the special case $F=\F_q$.
	
	\begin{lem}\label{lem:fq additive gcd} Let $f_1,\ldots,f_k\in\F_q[t]$ be $q$-additive polynomials. Then
		$$\reallywidetilde{\gcd(f_1,\ldots,f_k)}=\gcd\left(\tilde f_1,\ldots,\tilde f_k\right).$$
	\end{lem}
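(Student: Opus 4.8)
The plan is to exploit the fact that, because the coefficients lie in $\F_q$, the set $R$ of all $q$-additive polynomials in $\F_q[t]$ becomes a \emph{commutative} ring under ordinary addition and composition $f\circ g:=f(g(t))$, with identity element $t$, and that the map $f\mapsto\tilde f$ is a ring isomorphism of $R$ onto the ordinary polynomial ring $\F_q[t]$. Additivity of $\tilde{\ }$ is immediate, and if $f=\sum_i a_it^{q^i}$ and $g=\sum_j b_jt^{q^j}$ with $a_i,b_j\in\F_q$, then using $b_j^{q^i}=b_j$ together with $(x+y)^{q^i}=x^{q^i}+y^{q^i}$ one computes
\[
f\circ g=\sum_{i,j}a_ib_j\,t^{q^{i+j}},\qquad\text{so}\qquad \widetilde{f\circ g}=\Big(\sum_i a_it^i\Big)\Big(\sum_j b_jt^j\Big)=\tilde f\cdot\tilde g.
\]
(The hypothesis $F=\F_q$ is essential: over a larger field the analogous ring is a noncommutative twisted polynomial ring, and no separability assumption is needed here, unlike in Lemma~\ref{lem:gcd}.) In particular $R$ is a PID and Bézout's identity is available in it.

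Let $d\in R$ be the $q$-additive polynomial determined by $\tilde d=\gcd(\tilde f_1,\dots,\tilde f_k)$; it is monic since $\gcd(\tilde f_1,\dots,\tilde f_k)$ is. The lemma amounts to showing that $d$ equals the ordinary monic gcd $c:=\gcd(f_1,\dots,f_k)\in\F_q[t]$. The key observation is that divisibility by composition inside $R$ forces ordinary polynomial divisibility: if $u=w\circ v$ with $w=\sum_j w_jt^{q^j}\in R$, then $u=\sum_j w_j\,v^{q^j}$, and since every exponent $q^j$ occurring is $\ge 1$ (elements of $R$ have no constant term), each summand — hence $u$ — is divisible by $v$ in $\F_q[t]$.

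Now I would apply this twice. First, $\tilde d\mid\tilde f_i$ in $\F_q[t]$ gives $f_i=e_i\circ d$ in $R$ for some $e_i\in R$, so $d\mid f_i$ as ordinary polynomials for every $i$, and therefore $d\mid c$. Second, transporting a Bézout relation $\gcd(\tilde f_1,\dots,\tilde f_k)=\sum_i\tilde g_i\tilde f_i$ through the isomorphism produces $d=\sum_i g_i\circ f_i$ in $R$ with $g_i\in R$; by the same observation $f_i\mid g_i\circ f_i$ in $\F_q[t]$, hence $c\mid g_i\circ f_i$ for each $i$, so $c\mid d$. Since $c$ and $d$ are both monic, $c=d$, which is exactly $\reallywidetilde{\gcd(f_1,\dots,f_k)}=\gcd(\tilde f_1,\dots,\tilde f_k)$. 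I do not expect a genuine obstacle; the one thing to watch is keeping the two notions of divisibility — by composition inside $R$ versus ordinary polynomial division — distinct and verifying that they agree on the elements that arise, and the degenerate case where some or all $f_i$ vanish is absorbed automatically by the isomorphism.
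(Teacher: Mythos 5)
Your proof is correct and follows essentially the same route as the paper: transport the gcd through the $\F_q$-algebra isomorphism $f\mapsto\tilde f$ between $(R,+,\circ)$ and $(\F_q[t],+,\cdot)$, then establish the two divisibilities $d\mid c$ and $c\mid d$ using, respectively, the observation that composition-divisibility in $R$ implies ordinary divisibility (because all terms of a $q$-additive polynomial have exponent $\ge 1$) and a Bézout relation pulled back through the isomorphism. Your write-up is slightly more explicit than the paper's on why $v\mid w\circ v$ in $\F_q[t]$, but the argument is the same.
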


	\begin{proof} Since $a^q=a$ for all $a\in\F_q$, the map  $f\mapsto\tilde f$ defines an isomorphism of $\F_q$-algebras $$\mathcal A:=\left(\{f\in\F_q[X]:q\mbox{-additive}\},+,\circ\right)\to\left(\F_q[X],+,\cdot\right)$$ ($\circ$ denotes composition).
		
		Let $g\in\mathcal A$ be such that $\tilde g=\gcd\left(\tilde f_1,\ldots,\tilde f_k\right)$. We want to show $g=\gcd(f_1,\ldots,f_k)$. Write $\tilde f_i=\tilde h_i\tilde g$ ($h_i\in\mathcal A$) and so $f_i=h_i\circ g$. Since $X\mid h_i$, we deduce that  $g\mid f_i$. Hence, $g\mid\gcd(f_1,\ldots,f_k)$.
		
		By the Euclidean algorithm,  $\tilde g=\sum_{i=1}^k\tilde u_i\tilde f_i$ for some $u_i\in\mathcal A$. Thus,  $g=\sum_{i=1}^ku_i\circ f_i$. Again, since $X \mid u_i$, we see that $f_i \mid u_i\circ f_i$, and thus $g$ is a linear combination of the $f_i$ with coefficients in $\F_q[X]$. Hence, $\gcd(f_1,\ldots,f_k) \mid g$. This proves that $g=\gcd(f_1,\ldots,f_k)$, as required.
	\end{proof}
	
	\subsection{Additive polynomials over $\F_q(t)$}
	
	In this subsection we consider $q$-additive polynomials over the field $\F_q(t)$. One of the main tools in the Galois theory of such polynomials is specializing the variable $t$. We begin by reviewing the specialization of a separable monic polynomial
	$f=X^n+a_{n-1}X^{n-1}+\ldots+a_0\in\F_q[t][X].$
	For $\tau\in\overline{\F_q}$, we consider the specialization
	$$f_\tau=f(\tau,X)=X^n+a_{n-1}(\tau)X^{n-1}+\ldots+a_0(\tau)\in\overline{\F_q}[X].$$
	
	\begin{dfn} Let $f$ be as above, $\alpha_1,\ldots,\alpha_n\in\overline{\F_q(t)}$ the roots of $f$, $\mathcal O$ the integral closure of $\F_q[t]$ in $L=\F_q(t,\alpha_1,\ldots,\alpha_n)$. Since $f\in\F_q[t][X]$ is monic, $\alpha_i\in\mathcal O$. Let $\tau\in\overline{\F_q}$. A \emph{specialization map} for $f$ and $\tau$ is a homomorphism of $\F_q$-algebras $\psi\colon \mathcal O\to\overline{\F_q}$ such that $\psi(t)=\tau$.\end{dfn}
	
	The basic relation between $G_f=\Gal(f/\F_q(t))$ and $G_{f_\tau}=\Gal(f_\tau/\F_q(\tau))$ is summarized in the following
	
	\begin{prop}\label{prop:spec} Let $f=X^n+a_{n-1}X^{n-1}+\ldots+a_0\in\F_q[t][X]$ be a monic separable polynomial, $Z(f)=\{\alpha_1,\ldots,\alpha_n\}$, $\tau\in\overline{\F_q}$, $r=[\F_q(\tau):\F_q]$.
		\begin{enumerate}
			\item There exists a (non-unique) specialization map $\psi$ for $f,\tau$.
		\end{enumerate}
		If $\psi$ is a specialization map for $f,\tau$ then
		\begin{enumerate}\addtocounter{enumi}{1}
			\item $\psi(Z(f))=Z(f_\tau)$.
			\item There exists a (non-unique) $\sigma\in G_f=\Gal(f/\F_q(t))$ such that $\psi(\sigma(\alpha))=\psi(\alpha)^{q^r}$ for all $\alpha\in Z(f)$.
		\end{enumerate}
		If we assume additionally that $f_\tau$ is separable then
		\begin{enumerate}\addtocounter{enumi}{3}
			\item $\psi|_{Z(f)}\colon Z(f)\to Z(f_\tau)$ is bijective.
			\item The element $\sigma\in G_f$ in (iii) is unique. Furthermore, the conjugacy class of $\sigma$ depends on $f,\tau$ but not on the choice of $\psi$.
		\end{enumerate}
	\end{prop}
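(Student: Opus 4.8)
The plan is to run the standard machinery of reduction of Galois groups in the function‑field setting, where every residue field is finite and the ramification‑related complications of the general theory largely disappear. Set $L=\F_q(t,\alpha_1,\ldots,\alpha_n)$, the splitting field of $f$; since $f$ is separable, $L/\F_q(t)$ is a finite Galois extension with group $G_f$, and $\mathcal O$ is a Dedekind domain, finitely generated as an $\F_q[t]$‑module, on which $G_f$ acts (automorphisms of $L$ over $\F_q(t)$ preserve integral closures). Let $m_\tau\in\F_q[t]$ be the minimal polynomial of $\tau$ over $\F_q$, so that $\mathfrak p=(m_\tau)$ is maximal in $\F_q[t]$ with $\F_q[t]/\mathfrak p\cong\F_q(\tau)$ via $t\mapsto\tau$, a field of size $q^r$.

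For (i): by ``lying over'' (integrality of $\mathcal O$ over $\F_q[t]$) there is a maximal ideal $\mathfrak P\subset\mathcal O$ with $\mathfrak P\cap\F_q[t]=\mathfrak p$; the residue field $k(\mathfrak P)=\mathcal O/\mathfrak P$ is a finite extension of $\F_q[t]/\mathfrak p$, hence a finite field containing a copy of $\F_q(\tau)$ on which $t$ acts as $\tau$. Choosing any $\F_q$‑embedding $k(\mathfrak P)\hookrightarrow\overline{\F_q}$ compatible with this copy and composing with the quotient $\mathcal O\to k(\mathfrak P)$ produces a specialization map $\psi$. For (ii): any specialization map restricts on $\F_q[t]$ to evaluation at $\tau$, so applying $\psi$ coefficientwise to the identity $f=\prod_{i=1}^n(X-\alpha_i)$ (valid in $\mathcal O[X]$, as all $\alpha_i\in\mathcal O$) yields $f_\tau=\prod_{i=1}^n(X-\psi(\alpha_i))$ in $\overline{\F_q}[X]$; hence $Z(f_\tau)=\{\psi(\alpha_1),\ldots,\psi(\alpha_n)\}=\psi(Z(f))$.

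For (iii): given a specialization map $\psi$, its kernel $\mathfrak P$ is a prime ideal killing $m_\tau\neq 0$, hence a nonzero, therefore maximal, ideal of the Dedekind domain $\mathcal O$, and $\mathfrak P\cap\F_q[t]=\mathfrak p$. Let $D\le G_f$ be the decomposition group of $\mathfrak P$. The reduction homomorphism $D\to\Gal(k(\mathfrak P)/\F_q(\tau))$ is surjective; for Galois extensions of Dedekind domains this holds whenever the residue extension is separable, which it is here (a separable extension of finite fields). The target group is cyclic, generated by $x\mapsto x^{q^r}$, so choosing any preimage $\sigma\in D$ gives $\sigma(\alpha)\equiv\alpha^{q^r}\pmod{\mathfrak P}$ for all $\alpha\in\mathcal O$, whence $\psi(\sigma\alpha)=\psi(\alpha)^{q^r}$, in particular for $\alpha\in Z(f)$. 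For (iv): if $f_\tau$ is separable then it has $n=|Z(f)|$ distinct roots, so the surjection $\psi\colon Z(f)\to Z(f_\tau)$ from (ii) is a bijection.

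For (v): uniqueness is immediate --- if $\sigma,\sigma'$ both satisfy the relation in (iii), then $\psi(\sigma\alpha)=\psi(\sigma'\alpha)$ for all $\alpha\in Z(f)$, so by injectivity of $\psi|_{Z(f)}$ in (iv) and the fact that $G_f$ permutes $Z(f)$ we get $\sigma\alpha=\sigma'\alpha$ on all roots, hence $\sigma=\sigma'$ since $G_f$ acts faithfully on $Z(f)$ (which generates $L$). For the conjugacy statement: given two specialization maps $\psi,\psi'$ for $f,\tau$ with kernels $\mathfrak P,\mathfrak P'$, both lying over $\mathfrak p$, transitivity of $G_f$ on the primes over $\mathfrak p$ provides $\rho\in G_f$ with $\rho\mathfrak P=\mathfrak P'$; then $\psi$ and $\psi'\circ\rho$ have the same kernel $\mathfrak P$, hence induce two $\F_q(\tau)$‑embeddings of $k(\mathfrak P)$ into $\overline{\F_q}$ differing by some $\bar\delta\in\Gal(k(\mathfrak P)/\F_q(\tau))$, which by the surjectivity above lifts to $\delta\in D(\mathfrak P)$ with $\psi'\circ\rho=\psi\circ\delta$. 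Setting $\gamma=\delta\rho^{-1}$ gives $\psi'=\psi\circ\gamma$, and a short computation --- using that $\gamma$ permutes $Z(f)$ and the defining relation of $\sigma$ --- shows $\gamma^{-1}\sigma\gamma$ satisfies the relation characterizing $\sigma'$, so $\sigma'=\gamma^{-1}\sigma\gamma$ by uniqueness. The step I expect to be the main obstacle is precisely this last paragraph: carefully separating the two independent choices hidden in a specialization map (the prime $\mathfrak P$ above $\mathfrak p$, and the embedding of $k(\mathfrak P)$ into $\overline{\F_q}$) and tracking how $G_f$ transports them, so as to land in a single conjugacy class.
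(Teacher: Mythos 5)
Your proof is correct and follows the same underlying approach as the paper (lying over, decomposition groups of $\mathfrak P$ in the Dedekind domain $\mathcal O$, surjectivity onto the residue Galois group). The only difference is that where the paper dispatches parts (iii) and (v) by citing Rosen (Theorem 9.6 and Propositions 9.7, 9.10), you unpack those citations into self-contained arguments — in particular your treatment of (v), carefully separating the choice of prime above $\mathfrak p$ from the choice of embedding of the residue field, and transporting by $\rho$ and then $\delta$, is exactly what lies behind those references.
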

	
	\begin{proof}
		This is a standard consequence of the basic theory of Galois extensions of function fields, as can be found in \cite[\S 9]{Ros02}. We briefly sketch the derivation.
		
		{\it (i)} Let $P$ be the kernel of the unique $\F_q$-algebra 
		homomorphism $\F_q[t]\to\F_q(\tau)$ sending $t$ to $\tau$. Then,
		$P$ is a nonzero prime ideal of $\F_q[t]$. Recall that  $\mathcal 
		O$ is the integral closure of $\F_q[t]$ in 
		$L=\F_q(t,\alpha_1,\ldots,\alpha_n)$. Let $\mathfrak P\lhd\mathcal O$ be a 
		prime ideal lying over $P$. Then, $\mathcal O/\mathfrak P$ is a 
		finite extension field of $\F_q[t]/P$ and thus can be embedded 
		in $\overline{\F_q}$ in a way compatible with 
		$\F_q[t]/P\xrightarrow{\sim}\F_q(\tau)$ (via $t\mapsto\tau$). This 
		embedding composed with the quotient map $\mathcal 
		O\mapsto\mathcal O/\mathfrak P$ gives an $\F_q$-algebra 
		homomorphism $\psi\colon \mathcal O\to\overline{\F_q}$ with $\psi(t)=\tau$, i.e.\ a specialization map for $f,\tau$.
		
		{\it (ii)} Obvious. Indeed, $\psi$ is an $\F_q$-algebra homomorphism with $\psi(t)=\tau$ and therefore $f_\tau=\prod_{\alpha\in Z(f)}(X-\psi(\alpha))$.
		
		{\it (iii)} This follows from \cite[Theorem 9.6]{Ros02} since $\Fr_{q^r}\in\Gal\left((\mathcal O/\mathfrak P))/(\F_q[t]/P)\right)$, where $\mathfrak P=\ker\psi$ and $P=\mathfrak P\cap\F_q[t]=\ker(\F_q[t]\to\F_q(\tau):t\mapsto\tau).$
		
		{\it (iv)} If $f_\tau$ is separable then $\#Z(f)=\#Z(f_\tau)=n$, so the surjective map $\psi:Z(f)\to Z(f_\tau)$ is also bijective.
		
		{\it (v)} This follows from \cite[Propositions 9.7, 9.10]{Ros02}.
	\end{proof}
	
	\begin{dfn} Let $f$ be as in Proposition~\ref{prop:spec}, $\tau\in\overline{\F_q}$ and assume $f_\tau$ is separable. The conjugacy class of $\sigma\in G_f$ satisfying the assertion of Proposition~\ref{prop:spec}(iii) is called the \emph{Frobenius class}\footnote{The term \emph{Artin symbol} is also commonly used.} of $f,\tau$ and denoted by $\Fr(f;\tau)$. It is well-defined, and is independent of the choice of specialization map $\psi$, by Proposition~\ref{prop:spec}(v).\end{dfn}
	
	\begin{lemma} \label{lem:chebotarev} With $f$ as above, the union of $\Fr(f;\tau)$ for all $\tau\in\overline{\F_q}$ such that $f_\tau$ is separable generates $G_f$.\end{lemma}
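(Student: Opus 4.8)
The plan is to use the classical observation that the subgroup generated by all Frobenius classes is \emph{normal}, so that its fixed field cannot be a nontrivial extension. Write $K=\F_q(t)$, let $N=K(Z(f))$ be the splitting field of $f$, so that $G_f=\Gal(N/K)$, and let $H\leq G_f$ be the subgroup generated by $\bigcup_\tau\Fr(f;\tau)$, the union taken over all $\tau\in\overline{\F_q}$ for which $f_\tau$ is separable. By Proposition~\ref{prop:spec}(v) each $\Fr(f;\tau)$ is a conjugacy class of $G_f$, so $H$ is generated by a union of conjugacy classes and is therefore normal in $G_f$. Put $M=N^H$; then $M/K$ is Galois with $\Gal(M/K)\cong G_f/H$, and it suffices to prove $M=K$.

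The first step is a ramification bookkeeping point: if $f_\tau$ is separable then $\mathrm{disc}_X(f)(\tau)\neq 0$, so the prime $P_\tau=\ker(\F_q[t]\to\F_q(\tau))$ does not divide $\mathrm{disc}_X(f)$ and is therefore unramified in $N/K$ (it divides the discriminant of no irreducible factor of $f$), hence also unramified in $M/K$. Note $\mathrm{disc}_X(f)\in\F_q[t]$ is nonzero since $f$ is separable over $K$, so only finitely many $\tau$ are excluded. Next, for such a $\tau$ of degree $r=[\F_q(\tau):\F_q]$ choose a specialization map $\psi$ with $\mathfrak P=\ker\psi$. By Proposition~\ref{prop:spec}(iii) the element $\sigma\in\Fr(f;\tau)$ with $\psi(\sigma\alpha)=\psi(\alpha)^{q^r}$ is the Frobenius at $\mathfrak P$ for $N/K$, and since $P_\tau$ is unramified it generates the decomposition group $D_{\mathfrak P}(N/K)$. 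As $\Fr(f;\tau)\subseteq H$ by definition of $H$, the image of $\sigma$, hence of all of $D_{\mathfrak P}(N/K)$, in $\Gal(M/K)=G_f/H$ is trivial; equivalently $P_\tau$ splits completely in $M$. Thus all but finitely many primes of $K$ split completely in $M$.

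Finally I would invoke the fact that a proper finite extension of $\F_q(t)$ cannot have almost all of its primes split completely: comparing Euler products, complete splitting of all but finitely many primes of $K$ in $M$ gives $\zeta_M(s)=\zeta_K(s)^{[M:K]}E(s)$ with $E$ a finite product of local Euler factors, holomorphic and nonvanishing at $s=1$; since $\zeta_K$ has a simple pole at $s=1$ while $\zeta_M$ also has a simple pole there, we conclude $[M:K]=1$. (Alternatively, one may simply quote the Chebotarev density theorem for global function fields, as in \cite[Ch.~9]{Ros02}.) Hence $M=K$, i.e.\ $H=G_f$, which is the assertion.

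\textbf{Expected main obstacle.} Everything outside the last step is formal bookkeeping with Proposition~\ref{prop:spec}; the only non-formal ingredient is the final statement that a nontrivial extension of $\F_q(t)$ has primes that fail to split completely, and the only subtlety is to match $\Fr(f;\tau)$ correctly with the decomposition-group Frobenius so as to read off complete splitting. Both are entirely standard, so I do not anticipate a genuine difficulty.
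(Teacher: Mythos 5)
Your proof is correct and follows essentially the same route as the paper, which simply cites the Chebotarev density theorem for function fields (\cite[6.4.8]{FrJa08}) and leaves the standard reduction implicit. You spell out that reduction explicitly: normality of the subgroup $H$ generated by the Frobenius classes, passage to the fixed field $M=N^H$, complete splitting of all but finitely many primes in $M$, and then either the zeta-function pole comparison or a direct appeal to Chebotarev. The zeta-function ending is a nice observation that one needs only the simple pole of $\zeta_K$ at $s=1$ rather than the full equidistribution statement, but this is the easy half of Chebotarev and not a genuinely different argument. One small point worth flagging for rigor: the claim that $P_\tau\nmid\mathrm{disc}_X(f)$ implies $P_\tau$ is unramified in the splitting field $N/K$ is a standard fact about monic polynomials over Dedekind domains, but it deserves a citation (it is not merely that $P_\tau$ is unramified in $K[X]/(f_i)$ for each irreducible factor $f_i$; one also needs that it is then unramified in the compositum, which follows since a compositum of unramified extensions is unramified).
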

	
	\begin{proof} 
		This follows from the Chebotarev Density Theorem in function fields \cite[6.4.8]{FrJa08}.
	\end{proof}
	
	We now apply the above theory to the case of additive polynomials.
	
	\begin{prop}\label{prop: char poly} Let $f=X^{q^n}+a_{n-1}X^{q-1}+\ldots+a_0X,a_i\in\F_q[t],a_0\neq 0$ be a separable $q$-additive polynomial, $\tau\in\overline{\F_q}$. Let $\tilde f=X^n+a_{n-1}X^{n-1}+\ldots+a_0$ be the associated polynomial of $f$. 
		
		\begin{enumerate}
			\item [(i)] If $f_\tau$ is separable, then the determinant of (any element of) $\Fr(f;\tau)$ acting on $Z(f)$ is \\ $(-1)^{rn}N_{\F_{q^r}/\F_q}(a_0(\tau))$, where $r=[\F_q(\tau):\F_q]$.
			\item[(ii)] Assume that $\tau\in\F_q$ and write $\tilde f_\tau=X^kg,g(0)\neq 0$. There exists $\sigma\in G_f=\Gal(f/\F_q(t))$ such that the characteristic polynomial of $\sigma|_{Z(f)}$ is divisible by $g$.
		\end{enumerate}
		
	\end{prop}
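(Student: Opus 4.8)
The plan is to deduce both parts from the finite-field computation in Proposition~\ref{prop:frob_finite} by transporting it along a specialization map, as supplied by Proposition~\ref{prop:spec}. The common setup: fix a specialization map $\psi$ for $f,\tau$; since $\psi$ is an $\F_q$-algebra homomorphism, its restriction $\psi|_{Z(f)}$ is $\F_q$-linear, and by Proposition~\ref{prop:spec}(ii) it maps $Z(f)$ \emph{onto} $Z(f_\tau)$. Moreover Proposition~\ref{prop:spec}(iii) gives $\sigma\in G_f$ with $\psi(\sigma\alpha)=\psi(\alpha)^{q^r}$ for all $\alpha\in Z(f)$, where $r=[\F_q(\tau):\F_q]$; in other words $\sigma|_{Z(f)}$ is a lift, through $\psi$, of $\Fr_{q^r}$ acting on $Z(f_\tau)$.

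For part (i), separability of $f_\tau$ first forces $a_0(\tau)\neq 0$ (Proposition~\ref{prop:basic}(ii)), so $f_\tau$ is a separable $q$-additive polynomial with coefficients in $\F_{q^r}$; and, crucially, $\psi|_{Z(f)}$ is now a \emph{bijection} (Proposition~\ref{prop:spec}(iv)), hence an isomorphism of $\F_q$-vector spaces conjugating $\sigma|_{Z(f)}$ to $\Fr_{q^r}|_{Z(f_\tau)}$. Therefore $\det(\sigma|_{Z(f)})=\det(\Fr_{q^r}|_{Z(f_\tau)})=(-1)^{rn}N_{\F_{q^r}/\F_q}(a_0(\tau))$ by Proposition~\ref{prop:frob_finite}(iii) applied to $f_\tau$. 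Since the determinant is a class function, this value is the same for every element of $\Fr(f;\tau)$, which proves (i).

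For part (ii) we have $\tau\in\F_q$, so $r=1$ and all $a_i(\tau)$ lie in $\F_q$, but $f_\tau$ may be inseparable, so $\psi|_{Z(f)}$ is only onto, with a kernel $K$ of dimension $k$. I would first extract the separable part of $f_\tau$: using the $\F_q$-algebra isomorphism between $q$-additive polynomials under composition and $\F_q[X]$ under multiplication (from the proof of Lemma~\ref{lem:fq additive gcd}), the factorization $\tilde f_\tau=X^kg$ corresponds to $f_\tau=\phi^{q^k}$, where $\phi$ is the $q$-additive polynomial with $\tilde\phi=g$; as $g(0)\neq0$, $\phi$ is separable, and $Z(f_\tau)=Z(\phi)$ is an $\F_q$-space of dimension $\deg g$ on which $\Fr_q$ has characteristic polynomial $g$ (Proposition~\ref{prop:frob_finite}(ii)). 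Next, $K=\ker(\psi|_{Z(f)})$ is $\sigma$-stable, since $\alpha\in K$ gives $\psi(\sigma\alpha)=\psi(\alpha)^q=0$; hence $\psi$ descends to an $\F_q$-isomorphism $Z(f)/K\xrightarrow{\ \sim\ }Z(\phi)$ intertwining the induced $\sigma$-action with $\Fr_q|_{Z(\phi)}$. Consequently the characteristic polynomial of $\sigma|_{Z(f)}$ equals the characteristic polynomial of $\sigma|_K$ times $g$, so in particular it is divisible by $g$.

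The one genuinely delicate step is the last one: because $f_\tau$ is inseparable, $\psi$ collapses $Z(f)$ onto the strictly smaller space $Z(\phi)$, and one must check this collapse is $\sigma$-equivariant so that exactly $g$ — the characteristic polynomial of Frobenius on the separable part, not the whole of $\tilde f_\tau$ — persists as a factor on the quotient. The rest is routine bookkeeping with Propositions~\ref{prop:spec} and~\ref{prop:frob_finite}.
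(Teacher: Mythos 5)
Your proposal is correct and matches the paper's proof in all essentials: part (i) by transporting the determinant computation of Proposition~\ref{prop:frob_finite}(iii) along the bijection $\psi|_{Z(f)}$ supplied by Proposition~\ref{prop:spec}(iv--v), and part (ii) by noting $\ker(\psi|_{Z(f)})$ is $\sigma$-stable, factoring the characteristic polynomial across the resulting short exact sequence, extracting the separable part $\phi$ of $f_\tau$ (so that $Z(f_\tau)=Z(\phi)$), and applying Proposition~\ref{prop:frob_finite}(ii) with $r=1$ to identify the characteristic polynomial of $\Fr_q$ on $Z(\phi)$ as $g$. The paper's derivation is the same argument, merely more terse.
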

	
	\begin{proof} (i) follows by combining Proposition~\ref{prop:spec}(iv-v) with Proposition~\ref{prop:frob_finite}(iii). For the second part let $\mathcal O$ be the integral closure of $\F_q[t]$ in $\F_q(t,Z(f))$ and $\psi:\mathcal O\to\overline{\F_q}$ a specialization map for $f,\tau$ (its existence is guaranteed by Proposition~\ref{prop:spec}(i)). By Proposition~\ref{prop:spec}(iii) there exists $\sigma\in G_f$ such that \begin{equation}\label{eq:frob_elt}\psi(\sigma(\alpha))=\psi(\alpha)^{q}\end{equation} for all $\alpha\in Z(f)$.
		
		By Proposition~\ref{prop:spec}(ii), $\lambda=\psi|_{Z(f)}:Z(f)\to Z(f_\tau)$ is surjective. It is also a linear map of $\F_q$-vector spaces because $\psi$ is an $\F_q$-algebra homomorphism.
		By (\ref{eq:frob_elt}) we have that $\sigma(\ker\lambda)=\ker\lambda$. Hence (once again using (\ref{eq:frob_elt})) we have the equality of characteristic polynomials 
		\begin{equation}\label{eq:char_pol1}P_{\sigma|_{Z(f)}}=P_{\sigma|_{\ker\lambda}}P_{\Fr_{q}}|_{Z(f_\tau)}\end{equation}
		
		Now writing $g=x^{n-k}+g_{n-k-1}x^{n-k-1}+\ldots+g_0,\,g_i\in\F_q,g_0=g(0)\neq 0$ and $h=x^{q^{n-k}}+g_{n-k-1}x^{q^{n-k-1}}+\ldots+g_0x$ we see that $f_\tau=h^{q^k}$ and $Z(f_\tau)=Z(h)$. The associated polynomial of $h$ is $\tilde h=g$ and by Proposition~\ref{prop:frob_finite}(ii) we have $P_{\Fr_q|_{Z(h)}}=g$. Plugging this into (\ref{eq:char_pol1}) we obtain the assertion of (ii).
	\end{proof}
	
	\subsection{A height inequality}
	\label{sec:height}
	
	For a field $F$, a non-archimedean absolute value $\|\cdot\|$ on $F$ and polynomial $f=\sum_{i=0}^na_iX^i$ we define its Gauss norm with respect to $\|\cdot\|$ to be $\|f\|=\max_{0\le i\le n}\|a_i\|$. The Gauss norm is multiplicative, i.e. $\|fg\|=\|f\|\|g\|$ (this is a generalized version of the Gauss lemma).
	
	\begin{lemma} Let $F$ be a field with $\chr F=p$ equipped with an absolute value $\|\cdot\|$ and let $f,g\in F[X]$ be monic separable additive polynomials such that $g \mid f$. Then
		$\|f\|\ge \|g\|^{\deg f/\deg g}$.
	\end{lemma}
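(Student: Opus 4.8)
The plan is to reduce the inequality to the multiplicativity of the Gauss norm by writing $f$, over $\overline F$, as a product of "additive translates" of $g$, each of which has Gauss norm at least $\|g\|$. First I would fix notation: write $\deg f=q^n$ and $\deg g=q^m$ with $0\le m\le n$ (these are $q$-powers since $f,g$ are $q$-additive), so the claimed bound reads $\|f\|\ge\|g\|^{q^{n-m}}$. Since $\chr F=p$ the absolute value $\|\cdot\|$ is non-archimedean, hence extends to an absolute value on $\overline F$; the Gauss norm is then defined and multiplicative on $\overline F[X]$ and restricts to the given Gauss norm on $F[X]$. I also record that $\|g\|\ge\|1\|=1$ because $g$ is monic.

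The central step is to establish, over $\overline F$, the factorization
\[
f(X)=\prod_{j=1}^{q^{n-m}}\bigl(g(X)-g(\gamma_j)\bigr),
\]
where $\gamma_1,\dots,\gamma_{q^{n-m}}$ are representatives for the cosets of $Z(g)$ in $Z(f)$. Here is how I would derive it: from $g\mid f$ we get $Z(g)\subseteq Z(f)$, and by Proposition~\ref{prop:basic}(i) both are $\F_q$-linear subspaces of $\overline F$, of dimensions $m$ and $n$; since $f$ and $g$ are separable their root sets have the full sizes $q^m$ and $q^n$, and $f(X)=\prod_{\alpha\in Z(f)}(X-\alpha)$, $g(Y)=\prod_{\beta\in Z(g)}(Y-\beta)$. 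Thus $Z(f)$ splits into exactly $q^{n-m}$ cosets $\gamma_j+Z(g)$; grouping the roots of $f$ by these cosets and substituting $Y=X-\gamma_j$ turns the inner product over each coset into $g(X-\gamma_j)$, and finally $g(X-\gamma_j)=g(X)-g(\gamma_j)$ because $g$ is $q$-additive, cf.\ \eqref{eq:additivity}.

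Given the factorization the conclusion follows at once. For any $c\in\overline F$ the polynomial $g(X)-c$ has exactly the same coefficients as $g$ except for its constant term, which is $-c$ rather than $0$; hence $\|g(X)-c\|=\max(\|g\|,\|c\|)\ge\|g\|$. Multiplying up, and using multiplicativity of the Gauss norm on $\overline F[X]$,
\[
\|f\|=\prod_{j=1}^{q^{n-m}}\bigl\|g(X)-g(\gamma_j)\bigr\|\ \ge\ \prod_{j=1}^{q^{n-m}}\|g\|\ =\ \|g\|^{q^{n-m}},
\]
which is the asserted $\|g\|^{q^{\deg f-\deg g}}$ once $\deg$ is read as the additive degree of a $q$-additive polynomial (equivalently, the exponent is $\deg f/\deg g$).

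I expect the only genuinely nontrivial point to be the factorization step, since it is where the structure of $q$-additive polynomials is used — namely that root sets are $\F_q$-subspaces of the expected dimension (Proposition~\ref{prop:basic}) and that $g(X-\gamma)=g(X)-g(\gamma)$. Everything downstream (extending the absolute value, the one-line estimate $\|g(X)-c\|\ge\|g\|$, and the product) is routine and relies only on $\|1\|=1$ and multiplicativity of the Gauss norm, both already available.
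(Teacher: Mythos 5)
Your proof is correct and follows essentially the same route as the paper's: extend the absolute value to $\overline F$, use the coset decomposition of $Z(f)$ by $Z(g)$ together with $q$-additivity to factor $f=\prod_{\beta}(g(X)-g(\beta))$, bound each factor's Gauss norm from below by $\|g\|$ using that $g$ has zero constant term, and finish by multiplicativity of the Gauss norm. The only additional content in your write-up is the (correct) observation that ``$\deg$'' in the statement must be read as the additive degree (the $n$ with $\deg_X f = q^n$), which is indeed the intended meaning, as confirmed by how the lemma is applied in Corollary~\ref{cor:e_bound}.
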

	
	\begin{proof} The absolute value is non-archimedean since $F$ has 
		positive characteristic. By \cite[Theorem XII.4.1]{Lan02} it can be 
		extended to an absolute value on $\overline F$, which we also denote by 
		$\|\cdot\|$. Since $g \mid f$, $Z(g)\subset Z(f)$ is a linear 
		$\F_p$-subspace. By the separability assumption $\# Z(f)=\deg 
		f,\# Z(g)=\deg g$. Let $S\subset Z(f)$ be a system of 
		representatives of $Z(f)/Z(g)$. We have $\# S=\deg f/\deg g$. Using 
		the additivity of $g$,
		$$f=\prod_{\beta\in S}\prod_{\alpha\in Z(g)}(X-\alpha-\beta)=\prod_{\beta\in S}g(X-\beta)
		=\prod_{\beta\in S}(g(X)-g(\beta)).$$
		Since $g(0)=0$ we have $\|g(X)-g(\beta)\|\ge \|g\|$ and therefore
		$$
		\|f\|=\prod_{\beta\in S}\|g(X)-g(\beta)\|\ge \|g\|^{\#S}=\|g\|^{\deg f/\deg g},
		$$
		as required.\end{proof}
	
	\begin{corollary}\label{cor:e_bound} Let $k$ be a field with $\chr k=p$ and let $$f=X^{q^n}+a_{n-1}X^{q^{n-1}}+\ldots+a_0X,\quad g=X^{q^m}+b_{m-1}X^{q^{m-1}}+\ldots+b_0X\in k[t,X]$$ be $q$-additive polynomials with $a_i,b_i\in k[t]$, $a_0,b_0\neq 0$. Assume $g\mid f$. Then $\deg_tf\ge q^{n-m}\deg_tg$.\end{corollary}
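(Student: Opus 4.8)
The plan is to derive the corollary from the preceding lemma (the height inequality for Gauss norms) by choosing the absolute value on $F=k(t)$ that records the degree in $t$. Concretely, for $0\neq r=h_1/h_2\in k(t)$ with $h_1,h_2\in k[t]$ put $\|r\|=e^{\deg_t h_1-\deg_t h_2}$ and $\|0\|=0$; this is the standard non-archimedean absolute value attached to the place at infinity of $k(t)$, it satisfies $\|h\|=e^{\deg_t h}$ for $h\in k[t]$, and $\chr F=p$, so the preceding lemma applies to polynomials in $F[X]$.

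Next I would view $f$ and $g$ as elements of $F[X]$. They are monic (the $X$-leading coefficient of each is $1$), they are $q$-additive, and they are separable since $f'=a_0\neq 0$ and $g'=b_0\neq 0$ (Proposition~\ref{prop:basic}(ii)); moreover $g\mid f$ over $F$ because $g\mid f$ already holds in $k[t][X]\subseteq F[X]$. I then compute the Gauss norms with respect to $\|\cdot\|$: the coefficients of $f$ (as a polynomial in $X$) are $1$ in degree $X^{q^n}$, the $a_i$, and $0$ in degree $X^0$, so $\|f\|_{\mathrm{Gauss}}=\max(1,\max_i\|a_i\|)=e^{\deg_t f}$, using that $\deg_t f=\max_i\deg_t a_i\ge\deg_t a_0\ge 0$; likewise $\|g\|_{\mathrm{Gauss}}=e^{\deg_t g}$.

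Finally, since $f$ and $g$ are $q$-additive of $q$-degrees $n$ and $m$ (i.e.\ $\deg_X f=q^n$ and $\deg_X g=q^m$), the preceding lemma gives $\|f\|_{\mathrm{Gauss}}\ge\|g\|_{\mathrm{Gauss}}^{q^{n-m}}$, that is $e^{\deg_t f}\ge e^{q^{n-m}\deg_t g}$; taking logarithms yields $\deg_t f\ge q^{n-m}\deg_t g$, as claimed. There is no real obstacle in this argument; the only points needing care are selecting the degree valuation (rather than one of the finite places of $k(t)$) and checking that the Gauss norm of $f$ is exactly $e^{\deg_t f}$, which relies on $f$ being monic in $X$ so that its leading coefficient contributes only the harmless value $\|1\|=1$, together with $\deg_t f\ge 0$.
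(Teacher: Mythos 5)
Your proof is correct and follows exactly the paper's argument: the paper also deduces the corollary by applying the preceding height lemma with $F=k(t)$ and the degree absolute value (the paper writes it as $\|\cdot\|=q^{\deg(\cdot)}$ rather than your $e^{\deg(\cdot)}$, an immaterial change of base). Your additional remarks — that the monicity in $X$ makes the Gauss norm exactly $e^{\deg_t f}$, and that one must pick the place at infinity rather than a finite place — are correct observations that the paper leaves implicit.
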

	
	\begin{proof} Apply the previous lemma with $F=k(t)$ and $\|\cdot\|=q^{\deg(\cdot)}$.\end{proof}
	
	\section{Irreducibility}\label{sect:irred}
	
	The goal of the present section is to prove the following proposition which shows that for almost all $q$-additive polynomials in our (small box) model the Galois group $\Gal\left(f/\overline{\F_q}(t)\right)$ acts ``as irreducibly as possible'' (given $\con_tf$) on the space of roots.
	
	\begin{proposition}\label{prop:irred} Let $q$ be a fixed prime power, $d,\eta$ fixed natural numbers. Then the number of $q$-additive polynomials $f=X^{q^n}+a_{n-1}X^{q^{n-1}}+\ldots+a_0X$ with $a_i\in\F_q[t]_{\le d},a_0\neq 0$ such that $\deg\con_t(f)=\eta$ and
		which have a proper factor of the form $g=X^{q^m}+b_{m-1}X^{q^{m-1}}+\ldots+b_0X$ with $b_i\in\overline{\F_q}[t]$, $m<n$ and $\deg_tg>0$ is $O\left(q^{\left(\frac dq+1\right)n}\right)$ (the implied constant depends only on $q,d,\eta$).
	\end{proposition}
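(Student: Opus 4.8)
The plan is to combine the height inequality (Corollary~\ref{cor:e_bound}) with an explicit enumeration of ``composite'' $q$-additive polynomials over $\F_q(t)$.

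\textbf{Step 1: the height bound does the structural work.} Suppose $f$ has a factor $g=X^{q^m}+\ldots+b_0X$ as in the statement. Then $g\mid f$ with both separable $q$-additive (separability of $g$ forces $b_0\neq 0$), so Corollary~\ref{cor:e_bound} gives $\deg_tf\ge q^{n-m}\deg_tg$. Since $\deg_tf\le d$ and $\deg_tg\ge 1$, this forces $\ell:=n-m\le\log_qd$ — a bounded quantity — and also $\delta:=\deg_tg\le d/q^\ell\le d/q$. As $Z(g)\subseteq Z(f)$ are $\F_q$-subspaces and $g$ is surjective on $\overline{\F_q(t)}$, we have $f=h\circ g$ for a unique monic $q$-additive $h=\sum_{j=0}^\ell c_jX^{q^j}\in\overline{\F_q}[t][X]$ (with $c_\ell=1$) such that $Z(h)=g(Z(f))$. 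So the task reduces to counting $f\in\F_q[t]_{\le d}[X]$ of this composite shape.

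\textbf{Step 2: the count when $g$ is defined over $\F_q[t]$.} Expanding $f=\sum_j c_jg^{q^j}$ and comparing coefficients of $X^{q^k}$ yields, writing $g=\sum_{i=0}^m b_iX^{q^i}$ with $b_m=1$, the relations $a_k=\sum_{j=0}^{\min(k,\ell)}c_jb_{k-j}^{q^j}$. The $\ell$ top relations ($n-\ell\le k\le n-1$) express $c_{\ell-1},\ldots,c_0$ triangularly through $b_{m-1},\ldots,b_{m-\ell}$ and $a_{n-1},\ldots,a_{n-\ell}$. For $\ell\le k\le m-1$ the relation reads $a_k=b_{k-\ell}^{q^\ell}+(\text{an expression in the }c_j\text{ and }b_{k-\ell+1},\ldots,b_k)$; since $\deg_tc_j+q^j\delta\le d$ the expression part has $t$-degree $\le d$, while $b_{k-\ell}^{q^\ell}\in\F_q[t^{q^\ell}]$, so once everything to its right is fixed, the datum $a_k\in\F_q[t]_{\le d}$ is pinned down modulo $\F_q[t^{q^\ell}]$ — at most $q^{\lfloor d/q^\ell\rfloor+1}\le q^{d/q+1}$ choices — and then $b_{k-\ell}$ is the unique $q^\ell$-th root (discarded if its $t$-degree exceeds $\delta$). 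The relations with $k<\ell$ are then forced. Hence such an $f$ is determined by the data $(b_{m-1},\ldots,b_{m-\ell})\in\F_q[t]_{\le\delta}^\ell$, $(a_{n-1},\ldots,a_{n-\ell})\in\F_q[t]_{\le d}^\ell$, and $m-\ell$ ``middle'' coefficients each ranging over a set of size $\le q^{d/q+1}$; as $\ell$ is bounded, the total is $O_{q,d}\!\bigl(q^{(d/q+1)(m-\ell)}\bigr)=O_{q,d}\!\bigl(q^{(d/q+1)n}\bigr)$, and summing over the bounded range of $\ell$ preserves this.

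\textbf{Step 3: removing the hypothesis that $g$ is $\F_q(t)$-rational — the main obstacle.} A priori $W:=Z(g)$ is invariant only under $G_{\overline{\F_q}}:=\Gal(f/\overline{\F_q}(t))$; form $W'=\sum_\sigma\sigma W$, the sum over the $\Gal(\overline{\F_q}/\F_q)$-conjugates, which is $\Gal(f/\F_q(t))$-invariant and contains $W$, hence is not contained in $Z(\con_tf)$. If $W'\neq Z(f)$, then by Lemma~\ref{lem:factor_inv_subspace} it is the root space of a proper $q$-additive factor of $f$ over $\F_q(t)$ of positive $t$-degree (still of codimension $\le\log_qd$ by the height bound), so $f$ is covered by Step 2. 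The remaining possibility is that $W'=Z(f)$, i.e.\ $f$ has a positive-$t$-degree geometric invariant subspace but no arithmetic one. I expect this exceptional case to be impossible once $n$ is large: by Clifford theory the socle of $Z(f)$ as a $G_{\overline{\F_q}}$-module is $\Gal(f/\F_q(t))$-invariant, hence is contained in $Z(\con_tf)$ or equals $Z(f)$; combined with the fact that the height bound (applied via Corollary~\ref{cor:e_bound} to the $q$-additive factor attached to any $G_{\overline{\F_q}}$-submodule not contained in $Z(\con_tf)$) forces every nonzero such submodule to have codimension $\le\log_qd$, a short dimension count — a semisimple $G_{\overline{\F_q}}$-module all of whose simple summands have codimension $O_{q,d,\eta}(1)$ must be simple once $n$ is large, and Clifford then forces the relevant summand to be $\Gal(f/\F_q(t))$-invariant — contradicts reducibility of $G_{\overline{\F_q}}$. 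Making this last step fully rigorous, in particular the bookkeeping with $Z(\con_tf)$ when $\eta>0$ (where the relevant simple module can appear only as a quotient) and the case of isomorphic simple constituents, is the delicate part; for the bounded set of small $n$ the count is trivially $O_{q,d,\eta}(1)$.
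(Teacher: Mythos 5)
Your Steps 1--2 are essentially the paper's first two moves, though implemented more laboriously than necessary. The height inequality is used exactly as in the paper to force $\ell=n-m\le\log_q d$ and $\deg_t g\le d/q^\ell$. For the count in Step 2 the paper's route is both shorter and cleaner: instead of solving the coefficient recursion $a_k=\sum_j c_j b_{k-j}^{q^j}$, fix $g$, observe that the $\F_q$-linear map $\mathcal F_g\to(\F_q[t]_{\le d})^{n-m+1}$, $f\mapsto(a_n,\ldots,a_m)$, is injective (if those top coefficients vanish then $\deg_X f<q^m$ yet $g\mid f$, forcing $f=0$), so $\#\mathcal F_g\le q^{(d+1)(n-m+1)}=O(1)$; then sum over the at most $q^{(d/q+1)n}$ possible $g$. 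Your recursion argument looks like it gives the same exponent, but it involves delicate bookkeeping about $t$-degrees of the $c_j$'s and of the cross-terms $c_jb_{k-j}^{q^j}$ that you would need to check carefully (for instance, the bound you quote on $\deg_t c_j+q^j\delta$ is not obviously $\le d$), and none of it is needed.

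Step 3 is where there is a genuine gap. You correctly identify that the crux is to pass from a $\overline{\F_q}$-rational factor to an $\F_q$-rational one, and that $W'=\sum_\sigma\sigma W$ is the natural candidate. But your handling of the residual case $W'=Z(f)$ via Clifford theory is not just ``delicate'' — the lemma you invoke does not apply to the configuration that actually arises. In the case $Z(f)$ is $N$-semisimple ($N=\Gal(f/\overline{\F_q}(t))$), the height bound gives a dichotomy: a proper $N$-submodule has dimension $\ge n-\log_q d$ (large) or else $\le\eta$ (small, inside $Z(\con_t f)$). So there can be at most one large simple summand, but the \emph{small} summands have codimension $\ge n-\eta$, which is not $O_{q,d,\eta}(1)$; your ``all simple summands have bounded codimension, hence the module is simple'' lemma therefore has the wrong hypothesis. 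Moreover you do not treat the genuinely non-semisimple case (socle proper and contained in $Z(\con_t f)$), which you yourself flag as unresolved. The paper avoids all of this (Lemma~\ref{lem:bounds_red_fq}(iii)): choose $g$ of \emph{minimal} $\deg_X$ among proper $q$-additive divisors with $\deg_t>0$, take a Galois conjugate $g_1\ne g$, and form $u=\gcd(g,g_1)$, which is $q$-additive of strictly smaller $X$-degree, hence by minimality has $\deg_t u=0$, hence $u\mid\con_t f$ so $\dim Z(u)\le\eta$; then $n\ge\dim Z(g)+\dim Z(g_1)-\dim Z(u)\ge 2m-\eta$, which together with $n-m\le\log_q d$ gives $n\le\eta+2\log_q d$, so for $n$ beyond a bounded threshold the factor can be taken $\F_q$-rational. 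You should replace the Clifford sketch with this minimality-and-gcd argument (or a correct completion of the Clifford route); as written, Step 3 does not close.
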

	
	The proof of the proposition occupies the rest of the section. Throughout the section we assume the setup of the proposition, in particular $q,d,\eta$ are fixed and $n$ is a varying parameter. All asymptotic notation will have implicit constant or rate of decay which may depend on $q,d,\eta$ but no other parameters. First we show that it is sufficient to prove the proposition with the condition $b_i\in\overline{\F_q}[t]$ replaced with $b_i\in\F_q[t]$. This is immediate from part (iii) of the following
	
	\begin{lemma}\label{lem:bounds_red_fq} Let $f,g,m,n$ be as in Proposition~\ref{prop:irred} (with $g\mid f$). Then
		\begin{enumerate}
			\item[(i)] $\deg_tg\le d/q$.
			\item[(ii)] $n-m\le\log_qd$.
			\item[(iii)] Assume further that $n>\eta+2\log_qd$. Then there exists a $q$-additive $g_2\in\F_q[t,X]$ with $g_2\mid f,\,\deg_Xg_2<n,\deg_tg_2>0$.
	\end{enumerate}\end{lemma}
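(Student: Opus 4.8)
\emph{Parts (i) and (ii)} should come straight from the height inequality. Since $g\mid f$ and $f$ is separable, $g$ is monic, separable and $q$-additive (in particular $b_0\ne0$), so Corollary~\ref{cor:e_bound}, applied over $k=\overline{\F_q}$, gives $\deg_tf\ge q^{\,n-m}\deg_tg$. Because $\deg_tf=\max_i\deg_ta_i\le d$ and $n-m\ge1$, this forces $q\,\deg_tg\le d$, which is (i); and since $\deg_tg\ge1$ it forces $q^{\,n-m}\le d$, i.e.\ $n-m\le\log_qd$, which is (ii).

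For \emph{part (iii)} I plan a Galois-descent argument fed by the codimension bound of (ii). First I would note the elementary fact that any monic $q$-additive $h\in\overline{\F_q}(t)[X]$ dividing $f$ with $\deg_th=0$ has $\dim_{\F_q}Z(h)\le\eta$: such $h$ lies in $\overline{\F_q}[X]$, so $Z(h)\subset\overline{\F_q}$, and for $\alpha\in\overline{\F_q}$ a root of $f=\sum_ic_i(X)t^i$ the identity $\sum_ic_i(\alpha)t^i=0$ forces $c_i(\alpha)=0$ for every $i$, whence $\alpha\in Z\big(\gcd(c_0,\dots,c_d)\big)=Z(\con_tf)$, a space of $\F_q$-dimension $\eta$. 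Next I would pass to the separable closure $\F_q(t)^{\mathrm{sep}}$ (over which $f$ splits) with $G=\Gal(\F_q(t)^{\mathrm{sep}}/\F_q(t))$, and among the monic $q$-additive $h\in\overline{\F_q}[t][X]$ with $h\mid f$, $\deg_Xh<\deg_Xf$, $\deg_th\ge1$ — a family that is nonempty, as it contains the given $g$ — pick one, still denoted $g$, with $m:=\log_q\deg_Xg$ minimal; by (ii), $m\ge n-\log_qd$.

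The crux is to show $W:=Z(g)$ is $G$-stable, for then $g=\prod_{\alpha\in W}(X-\alpha)$ has $G$-invariant coefficients, so lies in $\F_q(t)[X]$, and Gauss's lemma (monic $g$ dividing monic $f\in\F_q[t][X]$) puts $g\in\F_q[t][X]$; thus $g_2:=g$ proves (iii). To get $W=\sigma W$ for a fixed $\sigma\in G$: since $\overline{\F_q}$, being the algebraic closure of $\F_q$ in $\F_q(t)^{\mathrm{sep}}$, is $G$-stable, $\sigma$ preserves $\overline{\F_q}[t]$ and $t$-degrees, so $g^\sigma$ (applying $\sigma$ to coefficients) again lies in our family with $Z(g^\sigma)=\sigma W$ and $\deg_Xg^\sigma=\deg_Xg$. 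Then $\gcd(g,g^\sigma)$ is a monic $q$-additive divisor of $f$ (Lemma~\ref{lem:gcd}; coefficients in $\overline{\F_q}[t]$ by Gauss) with root space $W\cap\sigma W\subset W$. If $\deg_t\gcd(g,g^\sigma)\ge1$ it is in our family, so minimality of $m$ gives $\dim(W\cap\sigma W)\ge\dim W$, forcing $W=\sigma W$. If $\deg_t\gcd(g,g^\sigma)=0$, the fact above gives $\dim(W\cap\sigma W)\le\eta$, hence
\[
\dim(W+\sigma W)=2m-\dim(W\cap\sigma W)\ \ge\ 2(n-\log_qd)-\eta\ >\ n
\]
by the hypothesis $n>\eta+2\log_qd$, contradicting $W+\sigma W\subset Z(f)$; so this case cannot occur.

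I expect the descent step to be the main obstacle. One cannot just take the intersection of all $\overline{\F_q}/\F_q$-conjugates of a factor — it may collapse into $Z(\con_tf)$ and lose its $t$-degree — nor their sum — it may fill $Z(f)$; what makes it work is choosing the factor of \emph{minimal} dimension and playing minimality against $m\ge n-\log_qd$, which is exactly where $n>\eta+2\log_qd$ enters.
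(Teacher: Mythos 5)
Your proof is correct and takes essentially the same route as the paper: parts (i)--(ii) are exactly the height-inequality argument via Corollary~\ref{cor:e_bound}, and part (iii) is the same combination of (a) choosing a divisor of minimal $X$-degree, (b) comparing it with a Frobenius conjugate via $\gcd$, and (c) playing the dimension count $\dim(W+\sigma W)=2m-\dim(W\cap\sigma W)$ against $m\ge n-\log_q d$ and the bound $\dim Z(u)\le\eta$ for $t$-constant divisors. The only cosmetic difference is presentation: the paper derives a direct contradiction from a single Frobenius conjugate $g_1\ne g$, whereas you phrase it as proving $Z(g)$ is stable under $\Gal(\F_q(t)^{\mathrm{sep}}/\F_q(t))$ and then descend; these are the same argument.
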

	
	\begin{proof} By Corollary~\ref{cor:e_bound} we have $d\ge\deg_tf\ge q^{n-m}\deg_tg\ge q^{n-m}$. Since $\deg_tg\ge 1$ and $m<n$ this implies (i) and (ii).
		
		To prove (iii) we assume $n>\eta+2\log_qd$ and we may assume without loss of generality that $g$ has minimal degree in $X$ among monic $q$-additive proper divisors of $f$ with $\deg_tg>0$ (otherwise replace $g$ with a monic divisor of smaller degree). If $g\in\F_q[t,X]$ we are done. Otherwise assume $g\in\overline{\F_q}[t,X]\setminus\F_q[t,X]$ and let $g_1\neq g$ be a conjugate of $g$ over $\F_q$ (e.g. apply the $q$-Frobenius to each coefficient of $g\in\overline{\F_q}[t,X]$). Since $f\in\F_q[t,X]$ we have also $g_1\mid f$.
		
		Denote $u=\mathrm{gcd}(g,g_1)$ (which is in $\FF_q[t,X]$ by Gauss's lemma). By Lemma~\ref{lem:gcd}, $u$ is a $q$-additive divisor of $f$ and we have $\deg_Xu<\deg_X g$. By the minimality assumption on $\deg_Xg$ we must have $\deg_tu=0$, i.e. $u\in\overline{\F_q}[X]$ and therefore $u\mid \con_tf$ and $\deg_Xu\le\eta=\deg\con_tf$. We have $Z(g)\cap Z(g_1)=Z(u)$.
		Since $g,g_1\mid f$ we have $Z(f)\supset Z(g)+Z(g_1)$ and therefore 
		$$n=\dim Z(f)\ge \dim Z(g)+\dim Z(g_1)-\dim Z(u)=2m-\deg_Xu\ge 2m-\eta.$$
		Combining the inequalities $n\ge 2m-\eta$ and $n-m\le\log_qd$ we get $n\le\eta+2\log_qd$, contradicting the assumption of part (iii). This completes the proof.\end{proof} 
	
	It remains to prove Proposition~\ref{prop:irred} with $\overline{\F_q}$ replaced by $\F_q$ in the statement.
	For 
	$$
	g=X^{q^m}+b_{m-1}X^{q^{m-1}}+\ldots+b_0X,b_i\in\F_q[t],\qquad b_0\neq 0,
	$$ 
	denote
	$$\mathcal F_g=\left\{f=a_nX^{q^n}+a_{n-1}X^{q^{n-1}}+\ldots+a_0X,a_i\in\F_q[t]_{\le d}:g\mid f\right\}$$
	(the parameter $n$ will be implicit in the notation).
	Note that $\mathcal F_g$ is a linear $\F_q$-subspace of $\F_q[t,X]$. Denote also
	$\mathcal F_g'=\{f\in\mathcal F_g\mbox{ monic,}\deg_Xf=q^n\}$. It follows from Lemma~\ref{lem:bounds_red_fq}(i-ii) that $\mathcal F_g'$ is empty unless $n-\log_qd\le m\le n-1$ and $\deg b_i\le d/q$.
	
	Hence, the set of polynomials $f$ we want to bound is contained in $\bigcup_{g\in\mathcal G}\mathcal F_g$, where
	$$
	\mathcal G=\left\{g=X^{q^m}+b_{m-1}X^{q^{m-1}}+\ldots+b_0X\,\middle|\, n-\log_qd\le m\le n-1,\deg b_i\in\F_q[t]_{\le d/q}\right\},
	$$ 
	and it remains to show
	$\#\bigcup_{g\in\mathcal G}\mathcal F_g=O\left(q^{\left(\frac dq+1\right)n}\right)$.
	
	We have $\#\mathcal G\le q^{\left(\frac dq+1\right) n}$ since each coefficient $b_i\in\F_q[t]$ has degree $\le d/q$ and $0\le i\le n-1$.
	For a given $g\in\mathcal G$ with $\deg_X g=q^m$ consider the $\F_q$-linear map 
	$$\ogphi:\mathcal F_g\to\left(\F_q[t]_{\le d}\right)^{n-m+1}$$ given by 
	$\ogphi\left(a_nX^{q^n}+\ldots+a_0X\right)=(a_n,a_{n-1},\ldots,a_m)$. Note 
	that $\ogphi$ is injective because if $f=a_nX^{q^n}+\ldots+a_0X\in\mathcal 
	F_g$ with $a_n=a_{n-1}=\ldots=a_m=0$ then $f=0$, since $f$ is divisible by 
	$g$ which has degree $q^m$ in $X$, so we cannot have $\deg_X f<q^m$. Hence $\dim\mathcal 
	F_g\le (d+1)(n-m+1)$ and $\#\mathcal F_g\le q^{(d+1)(n-m+1)}=O(1)$ since 
	$n-m\le\log_qd=O(1)$ if $g\in\mathcal G$. From this we deduce
	$\#\bigcup_{g\in\mathcal G}\mathcal F_g=O(\#\mathcal G)=O\left(q^{\left(\frac dq+1\right)n}\right)$,
	which concludes the proof of Proposition~\ref{prop:irred}.
	
	\begin{corollary}\label{cor:irred}
		Let $q$ be a fixed prime power and $d,\eta$ fixed natural numbers. The number of additive polynomials $f=X^{q^n}+a_{n-1}X^{q^{n-1}}+\ldots+a_0X$ with $a_i\in\F_q[t]_{\le d},a_0\neq 0$ such that $\deg\con_t(f)=\eta$ and the quotient action of $G_f=\Gal(f/\F_q(t))$ on $Z(f)/Z(\con_t f)$ is not irreducible
		is $O\left(q^{\left(\frac dq+1\right)n}\right)$.
	\end{corollary}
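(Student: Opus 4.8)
The plan is to deduce Corollary~\ref{cor:irred} directly from Proposition~\ref{prop:irred} via the dictionary between $q$-additive divisors of $f$ and $G_f$-invariant subspaces of $Z(f)$ provided by Lemma~\ref{lem:factor_inv_subspace}, applied over the field $F=\F_q(t)$ and also over $\overline{\F_q}(t)$. The key observation is that a subspace $W$ with $Z(\con_t f)\subseteq W\subseteq Z(f)$ is invariant under the quotient action on $Z(f)/Z(\con_t f)$ precisely when $W$ itself is $G_f$-invariant, so non-irreducibility of the quotient action produces a proper $G_f$-invariant subspace strictly between $Z(\con_t f)$ and $Z(f)$.

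First I would observe that if the quotient action of $G_f=\Gal(f/\F_q(t))$ on $Z(f)/Z(\con_t f)$ is reducible, then there is a proper nonzero invariant subspace of the quotient, whose preimage $W$ in $Z(f)$ satisfies $Z(\con_t f)\subsetneq W\subsetneq Z(f)$ and is $G_f$-invariant (for the full group, not just the arithmetic Galois group over $\F_q(t)$ — but at this stage we only know invariance under $\Gal(f/\F_q(t))$). By Lemma~\ref{lem:factor_inv_subspace} (with $F=\F_q(t)$), $W=Z(g)$ for a monic $q$-additive proper divisor $g\in\F_q(t)[X]$ of $f$ with $\eta<\deg_X g<q^n$; since $f\in\F_q[t,X]$ is monic in $X$ we may clear denominators / use that $g$ divides $f$ in $\F_q[t][X]$ (Gauss's lemma, as $g$ is monic) to get $g\in\F_q[t,X]$. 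The crucial remaining point is to show $\deg_t g>0$: this is where $\deg\con_t(f)=\eta$ enters. Indeed $\con_t(g)$ is a $q$-additive polynomial in $\F_q[X]$ (Corollary~\ref{cor:content additive}) dividing $\con_t(f)$, so if $\deg_t g=0$ then $g=\con_t(g)\mid\con_t(f)$, forcing $\deg_X g\le\eta$, a contradiction. Hence $\deg_t g>0$ and $g$ is exactly a proper factor of the type counted in Proposition~\ref{prop:irred} (with coefficients even in $\F_q[t]$, which is the stronger conclusion). Therefore the number of such $f$ is $O\!\left(q^{(d/q+1)n}\right)$.

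One subtlety to address: Proposition~\ref{prop:irred} counts $f$ admitting a divisor $g$ with $\deg_X g = q^m$ for $m<n$ and $\deg_t g>0$, i.e. $m\ge 1$ is not explicitly required, but $\deg_t g>0$ together with $q$-additivity forces $g\ne X$ hence $m\ge 1$ automatically; in our situation $\deg_X g>\eta\ge 0$ anyway. I should also make sure the typo-protected statement of the corollary is the intended one — note the exponent $q/d$ appearing there should read $d/q$ to match Proposition~\ref{prop:irred}, and I would state the bound as $O\!\left(q^{(d/q+1)n}\right)$ with the implied constant depending only on $q,d,\eta$. With these identifications the corollary is immediate from the proposition.

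I expect the main (and essentially only) obstacle to be the bookkeeping in the first step: carefully passing from "the quotient action on $Z(f)/Z(\con_t f)$ is reducible" to "$f$ has a $q$-additive divisor $g\in\F_q[t,X]$ with $\eta<\deg_X g<q^n$ and $\deg_t g>0$", making sure invariance is under the right group (the Galois group over $\F_q(t)$, so that Lemma~\ref{lem:factor_inv_subspace} applies and the divisor is defined over $\F_q(t)$, not merely $\overline{\F_q}(t)$) and that the content bound correctly forces positive $t$-degree. Everything after that is a direct quotation of Proposition~\ref{prop:irred}.
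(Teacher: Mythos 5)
Your proposal is correct and follows essentially the same route as the paper: pass from a nontrivial invariant subspace of the quotient $Z(f)/Z(\con_t f)$ to a $G_f$-invariant $W$ with $Z(\con_t f)\subsetneq W\subsetneq Z(f)$, invoke Lemma~\ref{lem:factor_inv_subspace} to produce a monic $q$-additive proper divisor $g\mid f$ with $W=Z(g)$, argue that $\deg_t g>0$ (since otherwise $g\in\F_q[X]$ divides $\con_t f$, contradicting $Z(\con_t f)\subsetneq Z(g)$), and apply Proposition~\ref{prop:irred}. You add two details that the paper treats implicitly but that are worth making explicit: the Gauss-lemma step to conclude $g\in\F_q[t][X]$ rather than merely $\F_q(t)[X]$, and the explicit content-divisibility argument forcing $\deg_t g>0$. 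You also correctly flag that the exponent $\frac{q}{d}$ in the statement of the corollary is a typo for $\frac{d}{q}$, matching Proposition~\ref{prop:irred}. One small slip: where you write ``forcing $\deg_X g\le\eta$'' you mean $\deg_X g\le q^\eta$ (equivalently $\dim Z(g)\le\eta$); this does not affect the argument.
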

	
	\begin{proof} 
		Note that since $\con_tf\in\F_q[X]$, the subspace $Z(\con_tf)\subset Z(f)$ is invariant under $G_f$ and the action of $G_f$ on $Z(f)/Z(\con_tf)$ is well-defined. The corollary now follows by combining Proposition~\ref{prop:irred} with Lemma~\ref{lem:factor_inv_subspace} to see that for all but $O\left(q^{\left(\frac dq+1\right)n}\right)$ choices of $f$ the space $Z(f)/Z(\con_tf)$ has no proper nontrivial $G_f$-invariant subspaces, since such a subspace lifts to a $G_f$-invariant subspace $Z(\con_tf)\subsetneq W\subsetneq Z(f)$ which corresponds by Lemma~\ref{lem:factor_inv_subspace} to a $q$-additive factor $g \mid f$ with $\deg_tg\ge 1$.
	\end{proof} 
	
	\section{Maximal subgroups of $\GL_n(q)$}
	\label{sect:subgroups}
	
	The maximal subgroups of $\GL_n(q)$ not containing $\SL_n(q)$ fall into nine Aschbacher classes $\mathcal{C}_1, ..., \mathcal{C}_8$ and $\mathcal{S}$, most of which can be described as stabilizers of certain structures relating to the natural module $V=\FF_q^n$. For precise descriptions of the classes, see \cite{aschbacher1984maximal} or \cite{kleidman1990subgroup}. In Section~\ref{sect:lowerright}, we will show that $G_f$ contains a large special linear group with high probability by showing that $G_f$ is unlikely to be contained in any $\mathcal C_i$ or $\CS$. The proof will rely on counting the polynomials appearing as characteristic polynomials of elements of groups in these classes.
	
	The most interesting classes for our purposes are \begin{align*}
		\mathcal{C}_1:& \mbox{ the maximal subgroups leaving invariant a subspace of } V,\\
		\mathcal{C}_2:& \mbox{ maximal subgroups of the form }\GL_{n/l}(q)\wr S_l \mbox{ for }l\mid n,\\
		\mathcal{C}_3:& \mbox{ maximal subgroups of the form }\GL_{n/b}(q^b).b \mbox{ for prime }b\mid n.
	\end{align*}
	
	
	\begin{prop}\label{prop:irreducible subgroups} Let $\CI= \bigcup_{i=2}^8\mathcal C_i \cup \CS$ (this is the set of all irreducible maximal subgroups of $\GL_n(q)$ not containing $\SL_n(q)$). The set of characteristic polynomials of elements in $\bigcup\CI$ has size $o\left(q^{n}\right)$ where the implied constant is an absolute constant.
	\end{prop}
	\begin{proof}
		We count the number of polynomials appearing as characteristic polynomials of elements in members of each class using results from \cite{fulman2012bounds, eberhard2023conjugacy}. The proposition then follows by a union bound. When we refer to the contribution of a class $\mathcal{C}$ of maximal subgroups of $\GL_n(q)$, we mean the number of characteristic polynomials of elements of groups in $\mathcal{C}$.
		
		The contribution of $\mathcal{C}:=\bigcup_{i=4}^8\mathcal C_i \cup \CS$ is at most the number of conjugacy classes of $\GL_n(q)$ which intersect a member of $\mathcal{C}$. By \cite[Lemmas~7.5-6, 9-12]{fulman2012bounds}, as noted in \cite[Theorem~4.8]{eberhard2023conjugacy}, this is $o(q^n)$.
		
		Class $\mathcal{C}_2$ consists of subgroups which preserve an additive decomposition $U \oplus V=\mathbb{F}_q^n$. Class $\mathcal{C}_3$ consists of subgroups preserving a field extension structure of prime degree. We claim we must only count semisimple conjugacy classes in these cases. The Jordan--Chevalley decomposition of a matrix $g \in \GL_n(q)$ is $g=us=su$ where $s$ is semisimple (i.e. diagonalizable over $\overline{\FF_q}$) and $u$ is unipotent (i.e. has all eigenvalues equal to $1$). Since conjugacy preserves semisimplicity, we may refer to conjugacy classes of $\GL_n(q)$ containing a semisimple element as semisimple conjugacy classes. If $M \in \mathcal{C}_2 \cup \mathcal{C}_3$ and $g\in M$ has Jordan--Chevalley decomposition $g=us=su$ then $s$ has the same characteristic polynomial as $g$. Moreover, any additive decomposition of $\mathbb{F}_q^n$ and any field extension of $\mathbb{F}_q$ which is preserved by $g$ is also preserved by $s$, since $s$ may be written as a polynomial in $g$. Therefore if $g$ is in a member of $\mathcal{C}_i$ for $i=2, 3$ then there is a semisimple element with the same characteristic polynomial in a member of $\mathcal{C}_i$. This implies that the contribution of $\mathcal{C}_2 \cup \mathcal{C}_3$ is the number of semisimple conjugacy classes of $\GL_n(q)$ which intersect a member of $\mathcal{C}_2 \cup \mathcal{C}_3$.
		
		Class $\mathcal{C}_2$ consists of subgroups conjugate to $M=\GL_{n/l}(q)\wr S_l$ for some $l>1$ dividing $n$. By \cite[Theorems~4.6]{eberhard2023conjugacy} the number of characteristic polynomials contributed by $M$ is $O(q^n n^{-\delta}(\log n)^{-1/2})$ where $\delta=1-(1+\log \log 2)/\log 2 \approx 0.086$ (note that in their notation $1-\delta_{\rm cc,ss}(G,M)$ is the proportion of characteristic polynomials in $\GL_n(q)$ which are contributed by a maximal subgroup $M$). Moreover it is well known (see, for example, \cite[Theorem 2.11]{MoVa06}) that the number of divisors of $n$ is $n^{o(1)}$, so summing over all $l$ dividing $n$ gives us that $\mathcal{C}_2$ contributes at most $O(q^n n^{o(1)-\delta}(\log n)^{-1/2})=o(q^n)$ characteristic polynomials.
		
		Class $\mathcal{C}_3$ consists of subgroups conjugate to $M=\GL_{n/b}(q^b).b$ for some prime $b\mid n$. By \cite[Theorem~4.7]{eberhard2023conjugacy} the contribution of $M$ is $O(q^n n^{-1/4}\log n)$, so summing over our at most $\log n$ prime divisors $b$ of $n$ gives us a contribution of $O(q^n n^{-1/4}(\log n)^2)=o(q^n)$.
		
		
	\end{proof}
	
	\section{Lower right block}\label{sect:lowerright}
	
	Throughout the section, we fix a prime power $q$ and a natural number $d$. We also use the notation $\tilde g$ for the associated polynomial of a $q$-additive $g\in\F_q[X]$, as defined by (\ref{eq:def tilde}). Let $f=X^{q^n}+a_{n-1}X^{q^{n-1}}+\ldots+a_0X\in\F_q[t,X],\,a_0\neq 0$ be a separable $q$-additive polynomial. By Corollary~\ref{cor:content additive}, $h=\con_tf$ is also additive. Recall that $G_f=\Gal(f/\F_q(t))$ acts $\F_q$-linearly on $Z(f)$, and clearly $Z(h)$ is an invariant subspace for this action (since $h\in\F_q[X]$). Denote by $\rho_f:G_f\to\GL(Z(f)/Z(h))$ the corresponding quotient representation. In the present subsection, we will show that for almost all $f$ in our model the image of $\rho_f$ is large, i.e. $\rho_f(G_f)\supset\SL(Z(f)/Z(h))$. In particular, this will imply the first statement in Theorem~\ref{thm:1}.
	
	\begin{prop}\label{prop:lower right corner} Let $q$ be a fixed prime power, $d$ a fixed natural number, $0\neq a_0(t)\in\F_q[t]_{\le d}$ a fixed polynomial and $h\in \F_q[X]$ a fixed separable $q$-additive polynomial. Then the number of polynomials of the form
		$$f=X^{q^n}+a_{n-1}X^{q^{n-1}}+\ldots+a_0X,\,a_i(t)\in\F_q[t]_{\le d}$$
		such that $\con_tf=h$ and $\rho_f(G_f)\not\supset\SL(Z(f)/Z(h))$ is $o(q^{(d+1)n})$ as $n\to\infty$.\end{prop}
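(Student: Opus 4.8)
The plan is to combine the near-irreducibility statement of Corollary~\ref{cor:irred}, the group-theoretic input of Proposition~\ref{prop:irreducible subgroups}, and a Frobenius element whose characteristic polynomial on the quotient $Z(f)/Z(h)$ is essentially uniform.

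First, by Corollary~\ref{cor:irred}, all but $O\bigl(q^{(d/q+1)n}\bigr)=o\bigl(q^{(d+1)n}\bigr)$ of the polynomials $f$ in question (those with $\con_tf=h$, so $\deg\con_tf=\eta$) have the property that the quotient representation $\rho_f\colon G_f\to\GL(Z(f)/Z(h))\cong\GL_{n-\eta}(q)$ has irreducible image. It therefore suffices to bound the number of $f$ with $\con_tf=h$ for which $\rho_f(G_f)$ is irreducible yet does not contain $\SL_{n-\eta}(q)$. For such an $f$, $\rho_f(G_f)$ lies in a maximal subgroup of $\GL_{n-\eta}(q)$ not containing $\SL_{n-\eta}(q)$; since $\rho_f(G_f)$ is irreducible, this maximal subgroup is not of type $\mathcal C_1$ and hence lies in $\bigcup_{i=2}^{8}\mathcal C_i\cup\CS=\CI$ (in dimension $n-\eta$). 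Consequently the characteristic polynomial of every element of $\rho_f(G_f)$ lies in the set $\mathcal P$ of characteristic polynomials of elements of $\bigcup\CI$, and $\#\mathcal P=o(q^{n-\eta})=o(q^n)$ by Proposition~\ref{prop:irreducible subgroups}.

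Next I would exhibit a distinguished element of $\rho_f(G_f)$. Assume first that $a_0$ does not vanish identically on $\F_q$ (which is automatic when $q>d$), and pick $\tau\in\F_q$ with $a_0(\tau)\neq0$. Then $f_\tau$ is separable, so the Frobenius class $\Fr(f;\tau)$ is well defined, and for $\sigma\in\Fr(f;\tau)$ Proposition~\ref{prop: char poly}(ii) (applied with $k=0$), together with Propositions~\ref{prop:spec}(iv)--(v) and~\ref{prop:frob_finite}(ii), shows that $\sigma$ acts on $Z(f)$ with characteristic polynomial $\tilde f_\tau=X^n+a_{n-1}(\tau)X^{n-1}+\dots+a_0(\tau)$. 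Since $\tau\in\F_q$, $\sigma$ acts on the $G_f$-invariant subspace $Z(h)$ --- which is contained in the constant field of the splitting field --- as the single Frobenius $\Fr_q$, whose characteristic polynomial there is $\tilde h$ by Proposition~\ref{prop:frob_finite}(ii). Moreover $\tilde h=\widetilde{\con_tf}=\con_t\tilde f$ divides $\tilde f$ by Lemma~\ref{lem:fq additive gcd}, hence $\tilde h\mid\tilde f_\tau$, and therefore the characteristic polynomial of $\rho_f(\sigma)=\sigma|_{Z(f)/Z(h)}$ equals $\tilde f_\tau/\tilde h$, a monic polynomial of degree $n-\eta$. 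Now as $(a_1,\dots,a_{n-1})$ ranges over $\F_q[t]_{\le d}^{\,n-1}$, the polynomial $\tilde f_\tau$ takes each of its possible values at most $q^{d(n-1)}$ times, because the evaluation map $\F_q[t]_{\le d}\to\F_q$, $a\mapsto a(\tau)$, has fibres of size $q^d$. Hence the number of $f$ with $\con_tf=h$ and $\tilde f_\tau/\tilde h\in\mathcal P$ is at most $\#\mathcal P\cdot q^{d(n-1)}=o(q^{n-\eta})\cdot q^{d(n-1)}=o\bigl(q^{(d+1)n}\bigr)$, which together with the reducible case disposed of above settles this case of the proposition.

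The main obstacle is the case in which $a_0$ vanishes on all of $\F_q$; this forces $q\le d$ and already arises for $a_0$ of the special form considered in Theorem~\ref{thm:2}. Here one specializes instead at a prime $\mathfrak p\lhd\F_q[t]$ of bounded degree $r=O(\log_q d)$ with $\mathfrak p\nmid a_0$ (such a prime exists since $a_0$ has at most $d$ irreducible factors), for which $f_\tau$ is again separable; by Proposition~\ref{prop:frob_finite}(i) the corresponding Frobenius element acts on $Z(f)$ with characteristic polynomial equal to that of the ``norm'' matrix $D\cdot D^{(q)}\cdots D^{(q^{r-1})}$, where $D$ is the companion matrix of $\tilde f_\tau$ over $\F_q[t]/\mathfrak p\cong\F_{q^r}$, and dividing this by $\tilde h$ (which is still the characteristic polynomial of the action on $Z(h)$) yields the characteristic polynomial of $\rho_f(\sigma)$. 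The delicate remaining point is to show that, as $\tilde f_\tau$ runs over the monic degree-$n$ polynomials over $\F_{q^r}$ with the prescribed nonzero constant term, the characteristic polynomial of this matrix is distributed evenly enough --- not overconcentrating on any set beyond what is already accounted for by the fibres of $f\mapsto\tilde f_\tau$ --- so that the same union bound against $\mathcal P$ goes through. (Throughout, the one genuinely deep ingredient is Proposition~\ref{prop:irreducible subgroups}, which rests on the Aschbacher classification together with the estimates of Fulman and Guralnick.)
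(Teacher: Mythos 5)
Your handling of the case where $a_0$ does not vanish identically on $\F_q$ is correct and in fact slightly simpler than the paper's argument: by specializing at a $\tau\in\F_q$ with $a_0(\tau)\neq 0$, the specialization $f_\tau$ is separable, the Frobenius element $\sigma$ has characteristic polynomial exactly $\tilde f_\tau$ on $Z(f)$, and the fibre-count $q^{d(n-1)}$ for the evaluation map, combined with $\#\mathcal P=o(q^{n-\eta})$ and $\#\mathcal F\asymp q^{(d+1)(n-\eta-1)}$, does give $o(1)$ probability (the constant $q^{d\eta+1}$ that appears is harmless since $q,d,\eta$ are fixed). Your identification of $\sigma|_{Z(h)}=\Fr_q$ and the divisibility $\tilde h\mid\tilde f_\tau$ via Lemma~\ref{lem:fq additive gcd} are also correct.

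However, the second case — $a_0$ vanishing on all of $\F_q$, which genuinely occurs when $q\le d$ — is left as an acknowledged gap, and this is precisely where the paper does its real work. The paper avoids the case split entirely by always specializing at $\tau=0$ and invoking Proposition~\ref{prop: char poly}(ii), which does not require $f_0$ to be separable: writing $\tilde f_0=X^k g$ with $g(0)\neq 0$, it produces a $\sigma\in G_f$ whose characteristic polynomial on $Z(f)$ is $g\cdot u$ for some unspecified monic $u$ of degree $k$. The price is that the characteristic polynomial is no longer a deterministic function of $\tilde f_0$, and this is exactly what Lemma~\ref{lem:uniform} is designed to handle: it shows that any map of the form $f\mapsto\tilde f(0,X)\,X^{-k}u(X)$ still cannot over-concentrate on a small target set $\mathcal T$, giving $\Prob(\ogphi(f)\in\mathcal T)\ll(q^{-n}\#\mathcal T)^{1/2}$ by bounding $\Prob(X^l\mid\tilde f(0,X))\ll q^{-l}$ and optimizing $l$. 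Your proposed workaround via a higher-degree prime $\mathfrak p\nmid a_0$ would replace the companion matrix by the product $D\cdot D^{(q)}\cdots D^{(q^{r-1})}$, whose characteristic polynomial is a nontrivial function of $\tilde f_\tau$ over $\F_{q^r}$; proving the needed equidistribution of that map is substantially more delicate than what you sketch, and you correctly flag but do not close this gap. So the proposal is missing the key mechanism (Proposition~\ref{prop: char poly}(ii) together with Lemma~\ref{lem:uniform}) that makes the argument uniform in $a_0$.
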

	
	The proof of Proposition~\ref{prop:lower right corner} occupies the rest of the section. The strategy of the proof is as follows: first, Corollary~\ref{cor:irred} guarantees that almost surely $\rho_f(G_f)$ acts irreducibly on $Z(f)/Z(h)$. Then we specialize $f(t,X)$ with $t=0$ and using Proposition~\ref{prop: char poly}(ii) deduce that $\rho_f(G_f)$ contains an element with characteristic polynomial of the form $\frac {u(X)}{X^k}\tilde f(0,X)/\tilde h(X)$ where $k=\ord_X(f)$ and $\deg u=k$. We will show that the distribution of $\tilde f(0,X)/\tilde h$ is close to uniform in $\F_q[X]_{n-\deg \tilde h}^\monic$ (in a sense which will be made precise) as $f$ varies in our family, and the characteristic polynomials constructed in the above way will also have distribution close to uniform. Now Proposition~\ref{prop:irreducible subgroups} shows that there are only $o(q^n)$ possible characteristic polynomials of elements which lie in some subgroup of $\GL(Z(f)/Z(h))$ which does not contain $\SL(Z(f)/Z(h))$. This shows that almost surely $\rho_f(G_f)\supset\SL(Z(f)/Z(h))$.
	
	Now we carry out the strategy outlined above.  We fix a monic, separable, $q$-additive polynomial $h\in\F_q[X]$ and denote $\eta=\deg\tilde h$. We also fix $$0\neq a_0=\sum_{i=0}^dc_it^i\in\F_q[t]_{\le d}.$$ Throughout the rest of the section all asymptotic notation has implicit constant or rate of convergence depending on $q,d,h,a_0$.
	Denote
	$$\mathcal F=\mathcal F_{h,a_0}=\left\{f=X^{q^n}+a_{n-1}X^{q^{n-1}}+\ldots+a_0X\,\mid\,a_i\in\F_q[t]_{\le d}\,(1\le i\le n-1),\,\con_tf=h\right\}.$$
	We equip $\mathcal F$ with the uniform probability measure. The notation $\Prob(E(f))$ will always refer to the probability of the event $E(f)$ with respect to the above measure. Note that for any $f\in\mathcal F$ we have $h\mid f$ and therefore $h\mid f(0,X)$ and $\tilde h\mid\tilde f(0,X)$ (by Lemma~\ref{lem:fq additive gcd}).
	
	\begin{lem}\label{lem:family count}
		$$\#\mathcal F\sim \frac{1-q^{-d}}{1-q^{-d-1}}q^{(d+1)(n-\eta-1)}$$ as $q,d,\eta=\deg \tilde h$ are fixed and $n\to\infty$.
	\end{lem}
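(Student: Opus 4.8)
The plan is to count $\mathcal F$ directly by analyzing the condition $\con_t f = h$. Recall that every $f \in \mathcal F$ has the shape $f = X^{q^n} + a_{n-1}X^{q^{n-1}} + \cdots + a_1 X^q + a_0 X$ with the coefficient $a_0$ fixed, and $a_i \in \F_q[t]_{\le d}$ for $1 \le i \le n-1$; the total number of such tuples (before imposing $\con_t f = h$) is $q^{(d+1)(n-1)}$. So the main task is to compute what fraction of these tuples has content exactly $h$. The key structural fact, via Lemma~\ref{lem:fq additive gcd}, is that passing to associated polynomials turns the content condition into an ordinary gcd condition: writing $\tilde f(t,X) = \sum_{i=0}^d \tilde c_i(X) t^i$ with $\tilde c_i \in \F_q[X]$ collecting the degree-$i$-in-$t$ parts, we have $\con_t f = h \iff \gcd(\tilde c_0, \dots, \tilde c_d) = \tilde h$, since the isomorphism $\mathcal A \cong (\F_q[X], +, \cdot)$ of Lemma~\ref{lem:fq additive gcd}'s proof intertwines $q$-additive content with polynomial gcd. (One must be slightly careful: the $\tilde c_i$ are not independent uniform, because $a_n = 1$ is fixed and $a_0$ is fixed — I address this below.)

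The cleanest route is a two-step sieve. First, write $\tilde f(t,X) = \tilde f_{\mathrm{const}}(X) + t \cdot R(t,X)$ where $\tilde f_{\mathrm{const}}(X) = \tilde f(0,X) = X^n + \cdots + a_0(0)$ and impose the necessary condition $\tilde h \mid \tilde c_i$ for every $i$; equivalently $f = h \circ g$ for some monic $q$-additive $g = X^{q^{n-\eta}} + \cdots + b_0 X$ with $b_i \in \F_q[t]_{\le d}$, by the divisibility part of Lemma~\ref{lem:fq additive gcd}'s argument ($h \mid f$ in the $q$-additive sense iff $\tilde h \mid \tilde f(t,\cdot)$ coefficientwise iff $\tilde f = \tilde g \tilde h$). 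The constraint that the bottom coefficient of $f$ equals the fixed $a_0$ translates (since $\con_t h = h$, i.e.\ $h \in \F_q[X]$, so the bottom coefficient of $h \circ g$ is $h_0 \cdot b_0$ where $h_0 \in \F_q^\times$) into $b_0 = h_0^{-1} a_0$, a \emph{fixed} nonzero element of $\F_q[t]_{\le d}$. So the set of $f$ with $h \mid f$ and correct $a_0$ is in bijection with tuples $(b_1, \dots, b_{n-\eta-1}) \in (\F_q[t]_{\le d})^{n-\eta-1}$, of which there are exactly $q^{(d+1)(n-\eta-1)}$. Second, I need to subtract off those $g$ for which $\con_t g \ne X$, i.e.\ $\con_t f \supsetneq h$. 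This is an inclusion–exclusion over prime $q$-additive "divisors" — equivalently over monic irreducible $\pi \in \F_q[X]$, $\pi \ne X$, dividing $\widetilde{\con_t g}$ — and the local density that a uniform tuple $(b_1,\dots,b_{n-\eta-1})$, $b_0$ fixed, is divisible by $\pi$ (in the associated-polynomial sense) is governed by a geometric-type factor. Because $b_0$ is fixed and nonzero while the other $b_i$ range freely, the relevant generating-function/Euler-product computation yields the correction factor $\frac{1-q^{-d}}{1-q^{-d-1}}$ in the limit: morally, the probability that $\con_t g = X$ given $b_0$ fixed is $\prod_\pi(\text{local factor})$, and the fixed bottom coefficient is exactly what shifts one factor from $(1 - q^{-(d+1)\deg\pi})$-type to $(1-q^{-d\deg\pi})/(1-q^{-(d+1)\deg\pi})$-type at $\pi$ — but in fact the whole correction collapses to a single clean ratio because the only constraint is at the "$X$-adic" level and $b_0 \ne 0$ already forces $X \nmid \widetilde{\con_t g}$. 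The precise bookkeeping is the routine part; the honest statement is that $\#\mathcal F = q^{(d+1)(n-\eta-1)} \cdot \big(\frac{1-q^{-d}}{1-q^{-d-1}} + o(1)\big)$, where the $o(1)$ error comes from truncating the inclusion–exclusion (each deeper term is geometrically smaller, uniformly in $n$, and the number of terms up to any fixed cutoff is $O(1)$ while the tail past degree $\asymp \log n$ is negligible).

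I expect the main obstacle to be the careful setup of the inclusion–exclusion that isolates the factor $\frac{1-q^{-d}}{1-q^{-d-1}}$ — specifically, correctly accounting for the fact that $a_0$ (hence $b_0$) is fixed rather than uniform, which breaks the naive independence across the coefficients $b_i$ and means the "probability that $\con_t g = X$" is not simply a product of independent local events. The resolution is that one should condition on $b_0$ throughout and observe that the event $\con_t g = X$, given $b_0 \ne 0$ fixed, depends on $(b_1, \dots, b_{n-\eta-1})$ through a nested sequence of divisibility conditions modulo powers of monic irreducibles $\ne X$; a standard Möbius-over-$\F_q[X]$ computation then gives a convergent product, and evaluating it (using that $b_0$ contributes one "extra" fixed coordinate, i.e.\ $d+1$ free coordinates become effectively $d$ at the relevant place) produces exactly the stated constant. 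All the estimates are uniform in $n$ because the number of coefficients $\le d$, the polynomial $h$, and $a_0$ are fixed, so the error terms decay geometrically and the asymptotic is clean.
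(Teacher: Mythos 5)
Your strategy matches the paper's: pass to associated polynomials via Lemma~\ref{lem:fq additive gcd}, peel off $\tilde h$ as a common factor, then sieve over the primes of $\F_q[X]$ omitting $X$. However, there is a concrete error in the parametrization step. You write ``$f = h\circ g$'', but the correct decomposition is $f = g\circ h$, and the two are genuinely different here because the twisted polynomial ring $\F_q[t]\{\tau_q\}$ is noncommutative. The associated-polynomial map sends $g\circ h$ (composing with $h\in\F_q\{\tau_q\}$ on the \emph{right}) to $\tilde g\,\tilde h$ — this is exactly what the paper exploits, writing $\tilde A_i=\tilde h B_i$ — but it does \emph{not} send $h\circ g$ to $\tilde h\,\tilde g$ once $g$ has coefficients outside $\F_q$: one can check $h=X^q+X$, $g=X^q+tX$ already breaks it. Concretely, under your parametrization the coefficients of $f=h\circ g$ are $a_k=\sum_{i+j=k}h_ib_j^{q^i}$, which can have $t$-degree up to $dq^{\eta}>d$, so the claimed bijection with $\bigl\{f : a_i\in\F_q[t]_{\le d},\ h\mid f\bigr\}$ fails (and indeed $\{f:h\mid f\}$ is \emph{not} equal to $\{h\circ g\}$ as sets — $h\mid f$ forces $Z(h)\subset Z(f)$, while $f=h\circ g$ forces $Z(g)\subset Z(f)$). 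The correct $f=g\circ h$ gives $a_k=\sum_{i+j=k}b_jh_i$ (using $h_i^{q^j}=h_i$ since $h_i\in\F_q$), an $\F_q$-linear triangular change of variables that does preserve the degree-$d$ box, and the bijection, dimension count $q^{(d+1)(n-\eta-1)}$, and the translation of $\con_tf=h$ into $\gcd(B_0,\dots,B_d)=1$ all go through. Since you independently wrote the correct relation $\tilde f=\tilde g\tilde h$, this is repairable, but as stated the parametrization is wrong.

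A secondary point: your description of the Euler product is muddled. The sentence about the fixed $b_0$ ``shifting one factor from $(1-q^{-(d+1)\deg\pi})$-type to $(1-q^{-d\deg\pi})/(1-q^{-(d+1)\deg\pi})$-type at $\pi$'' does not describe what happens. Fixing $b_0$ (equivalently, fixing all constant terms $B_i(0)$) does \emph{not} change the local density at a prime $P\neq X$, which remains $q^{-(d+1)\deg P}$ because the condition $P\mid B_i$ is independent of the affine constraint on $B_i(0)$ when $P(0)\neq 0$; its only effect is to make the $P=X$ density identically zero (since $b_0\neq 0$ implies not all $B_i(0)$ vanish). The resulting product is $\prod_{P\neq X}\bigl(1-q^{-(d+1)\deg P}\bigr)=\zeta_{\F_q[X]}(d+1)^{-1}/\bigl(1-q^{-(d+1)}\bigr)=(1-q^{-d})/(1-q^{-d-1})$, a single zeta identity, not a prime-by-prime correction. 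The paper delegates the error control to a standard reference; your sketch of the tail estimate is loosely in the right spirit but would need the above corrections to be made precise.
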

	\begin{proof} Any $f\in\mathcal F$ can be written uniquely as $f=A_d(X)t^d+A_{d-1}(X)t^{d-1}+\ldots+A_0(X)$ with $A_i\in\F_q[X]$ being $q$-additive, $\tilde A_0=\tilde f(0,X)$ monic of degree $n$ and $\tilde A_i$ of degree $\le n-1$ whenever $i>0$. The condition $\con_tf=\gcd(A_0,\ldots,A_d)=h$ is equivalent (via Lemma~\ref{lem:fq additive gcd}) to $\gcd(\tilde A_0,\ldots,\tilde A_d)=\tilde h$ and the condition on the $a_0$ coefficient is equivalent to $\tilde A_i(0)=c_i$. Writing $\tilde A_i=\tilde hB_i$ we see that $\mathcal F$ is in a bijective correspondence with the set
		$$\mathcal S= \left\{(B_0,\ldots,B_d)\in\left(\F_q[X]_{\le n-\eta-1}\right)^{d}\times\F_q[X]_{n-\eta}^\monic\,\mid\,B_i(0)=c_i/{\tilde h}(0),\,\gcd(B_0,\ldots,B_d)=1\right\}.$$
		A standard sieving argument (as in \cite{Nym72}) shows that  \begin{multline}\label{eq:fscount}\#\mathcal F=\#\mathcal S \sim C\cdot\#\left\{(B_0,\ldots,B_d)\in\left(\F_q[X]_{\le n-\eta-1}\right)^d\times\F_q[X]_{n-\eta}^\monic\,\mid\,B_i(0)=c_i/\tilde h(0)\right\}=\\=Cq^{(d+1)(n-\eta-1)},\end{multline} where
		\begin{equation}\label{eq:euler product}C=\prod_{P\in\F_q[X]\atop{\mathrm{prime}\atop{P\neq X}}}\left(1-q^{-(d+1)\deg P}\right)=\frac{1-q^{-d}}{1-q^{-d-1}}.\end{equation}
		Here the prime $P=X$ is omitted from the product because the condition $a_0\neq 0$ implies that not all $c_i$ are 0 and therefore $X$ cannot be a common divisor of $B_0,\ldots,B_d$. The last equality in (\ref{eq:euler product}) follows from \cite[\S 2, equations (1),(2)]{Ros02}). The assertion of the lemma follows from (\ref{eq:fscount}) and (\ref{eq:euler product}).
	\end{proof}

	\begin{lem}\label{lem:uniform}Let $\ogphi:\mathcal F\to\F_q[X]_n^{\mathrm{monic}}$ be a function such that for any $f\in \mathcal F$ we have $\ogphi(f)=\tilde f(0,X)X^{-k}u(X)$ for some $k\in\ZZ_{\ge 0}$ and $u\in\F_q[X]_{k}^\mathrm{monic}$ depending on $f$. Then for any $\mathcal T\subset\F_q[X]_n^{\mathrm{monic}}$ we have
		$$\Prob(\ogphi(f)\in\mathcal T) =O\left(\left(q^{-n}\#\mathcal T\right)^{1/2}\right).$$ 
	\end{lem}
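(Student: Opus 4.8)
The plan is to use the parametrization $\mathcal F\leftrightarrow\mathcal S$ from the proof of Lemma~\ref{lem:family count} together with the observation that $\ogphi(f)$ is always $\tilde f(0,X)$ modified only at the prime $X$ by a factor of bounded $X$-adic ``size''.

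First I would record the structural input. Writing $f=\sum_{i=0}^dA_i(X)t^i$ with $\tilde A_i=\tilde hB_i$ as in Lemma~\ref{lem:family count}, we have $\tilde f(0,X)=\tilde A_0=\tilde hB_0$ with $B_0$ monic of degree $n-\eta$, and for each fixed value of $B_0$ there are at most $q^{d(n-\eta-1)}$ polynomials $f\in\mathcal F$ realizing it (one factor $q^{n-\eta-1}$ for each of $B_1,\dots,B_d$ after its constant term is fixed; the coprimality condition only decreases the count). Since $h$ is separable, $\tilde h(0)\neq 0$, so $\gcd(\tilde h,X)=1$; hence $\ord_X\tilde f(0,X)=\ord_XB_0$, and since $\tilde h\mid\tilde f(0,X)$ the identity $X^{k}\,\ogphi(f)=\tilde f(0,X)u$ forces $\tilde h\mid\ogphi(f)$. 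Replacing $u$ by $u/X^{\ord_Xu}$ and $k$ by $k-\ord_Xu$ leaves $\ogphi$ and the hypotheses unchanged, so I may assume $k=k(f)\le\ord_X\tilde f(0,X)=\ord_XB_0(f)$, so that $X^{k(f)}\mid B_0(f)$. Then I partition $\mathcal F=\bigsqcup_{m\ge 0}\mathcal F_m$ with $\mathcal F_m=\{f:k(f)=m\}$ (empty for $m>n-\eta$).

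The heart of the argument would be two complementary bounds on each $\mathcal F_m$. On one hand $f\in\mathcal F_m$ forces $X^m\mid B_0(f)$, and there are only $q^{n-\eta-m}$ monic polynomials of degree $n-\eta$ divisible by $X^m$, so $\#\mathcal F_m\le q^{d(n-\eta-1)}q^{n-\eta-m}$, decaying geometrically. On the other hand, if $f\in\mathcal F_m$ and $\ogphi(f)=g$ then $\tilde h\mid g$ and, writing $B_0(f)=X^mB_0'$, we get $g/\tilde h=\ogphi(f)/\tilde h=B_0'\,u(f)$; thus $B_0'$ is a monic divisor of $g/\tilde h$ of co-degree $\deg u(f)=m$, and since $B_0'$ determines $B_0(f)$,
$$\#\{f\in\mathcal F_m:\ogphi(f)\in\mathcal T\}\le q^{d(n-\eta-1)}\sum_{g\in\mathcal T}d_m(g/\tilde h),$$
where $d_m(P)$ is the number of monic divisors of $P$ of co-degree $m$ (and $=0$ when $\tilde h\nmid g$). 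The step I expect to be the crux is to notice that one must \emph{not} sum $d_m$ over all $m$: that only recovers the divisor function $d(g/\tilde h)$, which is $q^{o(n)}$ but not polynomially small and hence too weak once $\#\mathcal T$ is within a slowly growing factor of $q^n$. Instead I would sum only over small co-degrees, using the elementary inequality $\sum_{m<m_0}d_m(P)\le q^{m_0}$ for every monic $P$ — because the complementation $D\mapsto P/D$ matches these with monic divisors of $P$ of degree $<m_0$, of which there are fewer than $q^{m_0}$ in total. This asymmetric handling of small versus large $m$ is precisely what forces the square-root exponent.

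To finish I would split the count at a threshold $m_0$: for $m<m_0$ the second bound gives $\sum_{m<m_0}\#\{f\in\mathcal F_m:\ogphi(f)\in\mathcal T\}\le q^{d(n-\eta-1)}\#\mathcal T\,q^{m_0}$, and for $m\ge m_0$ the first bound and a geometric series give $\sum_{m\ge m_0}\#\mathcal F_m\ll q^{d(n-\eta-1)}q^{n-\eta-m_0}$. Taking $m_0$ about $\tfrac12(n-\eta-\log_q\#\mathcal T)$ balances the two terms and yields $\#\{f\in\mathcal F:\ogphi(f)\in\mathcal T\}\ll q^{d(n-\eta-1)}(\#\mathcal T)^{1/2}q^{(n-\eta)/2}$; dividing by $\#\mathcal F\gg q^{(d+1)(n-\eta-1)}$ (Lemma~\ref{lem:family count}) gives $\Prob(\ogphi(f)\in\mathcal T)\ll(\#\mathcal T)^{1/2}q^{-n/2}=O((q^{-n}\#\mathcal T)^{1/2})$ with implied constant depending only on $q,d,h$. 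The degenerate range $\#\mathcal T\ge q^{n-\eta}$ needs no argument, since there the asserted bound is already $\ge q^{-\eta/2}$ while $\Prob\le 1$.
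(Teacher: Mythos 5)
Your proof is correct and uses the same core mechanism as the paper's: a dichotomy on the $X$-adic size (your $m=k(f)=\deg u$ versus the paper's $\ord_X\tilde f(0,X)$), with "few $f$ of large $X$-order" balanced against "few preimages of each target with small $X$-order" at a square-root threshold, and the same appeal to Lemma~\ref{lem:family count} for the denominator. The paper phrases this as a single threshold $l$ on $\ord_X\tilde f(0,X)$ and bounds the low-order preimages directly by $q^{l-1}$, whereas you partition by $m$ and invoke the divisor-complementation bound $\sum_{m<m_0}d_m(P)\le q^{m_0}$; this is only a difference in bookkeeping, not in substance.
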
 
	
	\begin{proof} Consider the map $\psi:\mathcal F\to\F_q[X]_n^{\monic}$ given by $\psi(f)=\tilde f(0,X)$. We have $|\psi^{-1}(v)|\ll q^{dn}$ (this is seen by writing $\tilde f=\tilde h\sum_{i=0}^dB_i(X)t^i$ with $B_i\in\F_q[X]_{\le n-\eta}$ and $B_0=v/\tilde h$) and by Lemma~\ref{lem:family count} we also have $\#\mathcal F\gg q^nq^{dn}$. Hence for any $\mathcal U\subset\F_q[X]_n^{\mathrm{monic}}$ we have \begin{equation}\label{eq:psi uniform}\Prob\left(\tilde f(0,X)\in \mathcal U\right)\ll q^{-n}\cdot \#\mathcal U.\end{equation}
		
		Now let $l$ be a natural number. From (\ref{eq:psi uniform})  we have $\Prob\left(X^l\mid \tilde f(0,X)\right)\ll q^{-l}$. On the other hand if $w\in\F_q[X]_n^\monic$ then there are at most $q^{l-1}$ polynomials $v\in\F_q[X]_n^\monic$ such that $vX^{-k}u=w$ for some $k,u$ and $X^l\nmid v$ (since $\deg u=k$ and $X^{k-l+1}|u$). Using (\ref{eq:psi uniform}) again we see that for $\mathcal T\subset\F_q[X]_n^\monic$ we have
		$$\Prob(\ogphi(f)\in\mathcal T)\le\Prob\left(X^l\mid \tilde f(0,X)\right)+\Prob\left(X^l\nmid \tilde f(0,X)\mbox{ and }\ogphi(f)\in\mathcal T\right)\ll q^{-l}+q^{l-n}\#\mathcal T.$$
		Now choosing $l=\big\lceil -\log_q\left(q^{-n}\#\mathcal T\right)^{1/2}\big\rceil$ we obtain the assertion of the lemma.
	\end{proof}
	
	We are now ready to prove Proposition~\ref{prop:lower right corner}. By Corollary~\ref{cor:irred} combined with Lemma~\ref{lem:family count} we have $\Prob\left(\rho_f(G_f)\mbox{ reducible}\right)=o(1)$. Next, by Proposition~\ref{prop: char poly}(ii) (taking the specialization $\tau=0$) there is an element $\sigma\in G_f$ such that the characteristic polynomial of $\sigma$ acting on $Z(f)$ is of the form $X^{-k}u\tilde f(0,X)$ for some $u\in\F_q[X]_k^\monic$. Denote this characteristic polynomial by $\ogphi(f)$. Note that the map $\ogphi:\mathcal F\mapsto\F_q[X]_n^\monic$ satisfies the assumption of Lemma~\ref{lem:uniform}.
	
	Denote by $\CI$ the set of irreducible subgroups of $\GL(Z(f)/Z(h))$ not containing $\SL(Z(f)/Z(h))$ and let $\mathcal T\subset\F_q[X]$ be the set of characteristic polynomials of elements $A\in\GL(Z(f))$ such that $AZ(h)=Z(h)$ and the quotient map $A|_{Z(f)/Z(h)}\in\bigcup\CI$ lies in some subgroup of $\GL(Z(f)/Z(h))$ not containing $\SL(Z(f)/Z(h))$. By Proposition~\ref{prop:irreducible subgroups} we have $\#\mathcal T=o(q^n)$ (note that there are a priori $O(1)$ possibilities for $A|_{Z(h)}$; moreover the way $\sigma$ is actually constructed shows that $\sigma_{Z(h)}=\Fr_q$ is uniquely determined).
	
	Now using Lemma~\ref{lem:uniform} we obtain
	\begin{multline*}\Prob\left(\rho_f(G_f)\not\supset\SL(Z(f)/Z(h))\right)=\\=\Prob\left(\rho_f(G_f)\mbox{ irreducible and }\rho_f(G_f)\not\supset\SL(Z(f)/Z(h))\right)+o(1)\le\\
		\le\Prob(\ogphi(f)\in\mathcal T)+o(1)=o(1),\end{multline*}
	which completes the proof of Proposition~\ref{prop:lower right corner}.
	
	\section{Determinants}
	\label{sec:det}
	
	Let $q$ be a fixed prime power. Let $$f=X^{q^n}+a_{n-1}X^{q^{n-1}}+\ldots+a_0X,\,a_i\in\F_q[t],\,a_0\neq 0$$ be a separable $q$-additive polynomial and denote
	$$h=\con_tf=X^{q^\eta}+h_{\eta-1}X^{q^{\eta-1}}+\ldots+h_0X\in\F_q[x]$$
	($h$ is $q$-additive by Corollary~\ref{cor:content additive}).
	Denote $G=G_f=\Gal(f/\F_q(t))$. Since $h\mid f$ we have $Z(h)\subset Z(f)$ and since $h\in\F_q[X]$ the subspace $Z(h)$ is invariant for the action of $G$ on $Z(f)$ (Lemma~\ref{lem:factor_inv_subspace}). Consequently, the $\F_q$-linear action of $G$ on $Z(f)/Z(h)$ is well-defined, and we denote by $\rho:G\to\GL(Z(f)/Z(h))$ the corresponding quotient representation. We consider the homomorphism 
	$\delta:G\to\GL(Z(h))\times\F_q^\times$ given by $$\delta(\sigma)=\left( \sigma|_{Z(h)},\det\rho(\sigma)\right).$$ In the present section we want to describe the image of $\delta$.
	
	\begin{prop}\label{prop:det}With notation as above write $a_0=cu^k,c\in\F_q^\times$ and $u\in\F_q[t]\setminus\F_q$ monic with $k\ge 0$ maximal (such that a representation of this form exists). Then
		\begin{equation}\label{eq:image_delta}\delta(G_f)=\left\{(\phi^r,s)\,\middle|\,r\in\ZZ,s\in\left((-1)^{n-\eta}\frac{c}{h_0}\right)^r\F_q^{\times k}\right\},\end{equation}
		where $\phi=\Fr_q|_{Z(h)}$.
	\end{prop}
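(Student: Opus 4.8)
The plan is to reduce the computation of $\delta(G_f)$ to the explicit description of Frobenius elements furnished by Propositions~\ref{prop: char poly} and \ref{prop:frob_finite}, and then to analyse the resulting character of $G_f$ through its values on those Frobenius elements.

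I would first record three elementary reductions. Since $Z(h)\subseteq Z(f)$ is $G_f$-invariant, each $\sigma\in G_f$ is block upper triangular in a basis of $Z(f)$ extending a basis of $Z(h)$, so $\det(\sigma|_{Z(f)})=\det(\sigma|_{Z(h)})\,\det\rho(\sigma)$, i.e. $\det\rho(\sigma)=\det(\sigma|_{Z(f)})/\det(\sigma|_{Z(h)})$. Next, because $h\in\F_q[X]$, the field $\F_q(Z(h))$ is a finite constant subextension of the splitting field of $f$, so the restriction $\sigma\mapsto\sigma|_{Z(h)}$ sends $G_f$ onto $\langle\phi\rangle$. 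Finally, writing $\beta:=(-1)^{n-\eta}c/h_0$, the right-hand side of \eqref{eq:image_delta} is exactly $\Delta:=\langle(\phi,\beta)\rangle\cdot(\{1\}\times\F_q^{\times k})$, which is a subgroup of $\langle\phi\rangle\times\F_q^\times$ since the latter is abelian.

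The core computation is the value of $\delta$ on a Frobenius class. Fix $\tau\in\overline{\F_q}$ with $a_0(\tau)\neq0$ (so that $f_\tau$ is separable) and put $r=[\F_q(\tau):\F_q]$. By Proposition~\ref{prop:spec}(iii) every $\sigma\in\Fr(f;\tau)$ acts on $Z(h)\subseteq\overline{\F_q}$ as $\Fr_{q^r}$, hence $\sigma|_{Z(h)}=\phi^r$; by Proposition~\ref{prop: char poly}(i), $\det(\sigma|_{Z(f)})=(-1)^{rn}N_{\F_{q^r}/\F_q}(a_0(\tau))$; and Proposition~\ref{prop:frob_finite}(iii) applied to the $q$-additive polynomial $h$ over $\F_q$ gives $\det\phi=(-1)^\eta h_0$, whence $\det(\sigma|_{Z(h)})=((-1)^\eta h_0)^r$. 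Substituting $a_0=cu^k$ and using multiplicativity of the norm,
\[
\delta(\sigma)=\bigl(\phi^r,\ \beta^r N_{\F_{q^r}/\F_q}(u(\tau))^k\bigr)=(\phi,\beta)^r\cdot\bigl(1,\ N_{\F_{q^r}/\F_q}(u(\tau))^k\bigr)\in\Delta.
\]
Since the classes $\Fr(f;\tau)$ generate $G_f$ by Lemma~\ref{lem:chebotarev} and $\Delta$ is a subgroup, we conclude $\delta(G_f)\subseteq\Delta$.

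It remains to prove $\Delta\subseteq\delta(G_f)$, and for this it suffices to show $\{1\}\times\F_q^{\times k}\subseteq\delta(G_f)$: granting that, the second reduction above produces $\sigma$ with $\delta(\sigma)=(\phi,\beta w)$, $w\in\F_q^{\times k}$ (using $\delta(G_f)\subseteq\Delta$), so $(\phi,\beta)=(\phi,\beta w)(1,w)^{-1}\in\delta(G_f)$ and hence $\Delta\subseteq\delta(G_f)$. The inclusion $\{1\}\times\F_q^{\times k}\subseteq\delta(G_f)$ is the step I expect to be the main obstacle. I would regard $\det\rho$ as a character of $\Gal(\overline{\F_q(t)}/\F_q(t))$; by the Frobenius computation it equals $\nu\cdot\kappa^{k}$, where $\nu$ is the everywhere-unramified character whose Frobenius at a degree-$r$ closed point equals $\beta^{r}$, and $\kappa$ is the character whose Frobenius at $\tau$ equals $N_{\F_{q^{\deg\tau}}/\F_q}(u(\tau))$. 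The point is that $\kappa$ is the Kummer character of the extension $\F_q(t)\bigl(u^{1/(q-1)}\bigr)$ (note $\mu_{q-1}\subseteq\F_q$), which is \emph{geometric} — its field of constants is $\F_q$ because $u$ is a non-constant polynomial — and whose Galois group is cyclic of order equal to the order of $u$ in $\F_q(t)^\times/\F_q(t)^{\times(q-1)}$; since $u$ is not a perfect power this order is $q-1$, so $\kappa$ surjects onto $\F_q^\times$. Restricting to the geometric subgroup $\Gal(\overline{\F_q(t)}/\overline{\F_q}(t))$ — whose image in $G_f$ consists of the elements fixing the constant field of the splitting field, hence acting trivially on $Z(h)\subseteq\overline{\F_q}$ — the factor $\nu$ becomes trivial while $\kappa$ stays surjective (again by geometricity), so $\det\rho$ attains every value of $\F_q^{\times k}$ on elements $\sigma$ with $\sigma|_{Z(h)}=1$. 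This gives $\{1\}\times\F_q^{\times k}\subseteq\delta(G_f)$ and completes the proof. (One can avoid Kummer theory here: the required surjectivity is equivalent to the statement that the norms $N_{\F_{q^r}/\F_q}(u(\tau))$ over closed points $\tau$ of large degree $r$ cover $\F_q^\times$, which follows from a Chebotarev/Lang--Weil count, again using that $u$ is not a perfect power.)
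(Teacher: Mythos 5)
Your proof is correct, and the Frobenius computation (the ``$\delta(G_f)\subseteq\Delta$'' inclusion) agrees with the paper's, both resting on Proposition~\ref{prop: char poly}, Proposition~\ref{prop:frob_finite}(iii) and Lemma~\ref{lem:chebotarev}. Where you genuinely diverge is in the reverse inclusion. The paper proves Lemma~\ref{lem:weil} via a character-sum estimate (the Weil bound for $\chi(N_{\F_{q^r}/\F_q}(u(\tau)))$), which for large $r$ produces Frobenius elements $\Fr(f;\tau)$ whose $\delta$-values realize every prescribed $(\phi^r,\beta^r b^k)$, and then lets these generate $\Delta$. You instead decompose the character $\det\rho$ as $\nu\cdot\kappa^k$ with $\nu$ a constant-field character and $\kappa$ the $(q-1)$st Kummer character attached to $u$; the hypothesis that $u$ is not a proper power makes $\overline{\F_q}(t)(u^{1/(q-1)})/\overline{\F_q}(t)$ have full degree $q-1$, so $\kappa$ stays surjective on the geometric Galois group while $\nu$ dies there. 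Both routes hinge on exactly the same arithmetic fact about $u$, but yours trades the explicit Weil-bound estimate for a structural Kummer/class-field-theoretic identification; the paper's version is more self-contained and elementary, while yours makes transparent \emph{why} $\det\rho$ behaves this way (it literally is a power of a Kummer character times a constant-field twist). One small point worth spelling out if this were to be written up: the identity $\det\rho=\nu\kappa^k$ should be justified by comparing both sides on a common finite Galois quotient (e.g.\ $\Gal(M/\F_q(t))$ with $M$ the compositum of the splitting field $N$, $\F_{q^m}(t)$ and $\F_q(t)(u^{1/(q-1)})$) and invoking Chebotarev, since a priori the two sides factor through different finite quotients of the absolute Galois group; you gesture at this but do not say it explicitly.
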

	
	We first prove an auxiliary result on finite fields.
	
	\begin{lemma}\label{lem:weil}Let $u\in\F_q[t]$ be a nonconstant monic polynomial which is not an $l$-th power of another polynomial for any $l\ge 2$. Then for $r\ge r_0(u)$ sufficiently large and any $b\in\F_q^\times$ there exists $\tau\in \overline{\FF_q}$ with $[\F_q(\tau):\F_q]=r$ such that $N_{\F_{q^r}/\F_q}(u(\tau))=b$ (in particular, $u(\tau)\neq 0)$.
	\end{lemma}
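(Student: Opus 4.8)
The plan is to count, using the theory of $L$-functions over function fields, the number of $\tau\in\overline{\F_q}$ of degree exactly $r$ with $u(\tau)\neq 0$ and $N_{\F_{q^r}/\F_q}(u(\tau))=b$, and show this count is positive for $r$ large. First I would reformulate: since $u$ is monic of degree, say, $e$, a degree-$r$ place $\tau$ (equivalently a monic irreducible $P_\tau\in\F_q[t]$ of degree $r$) with $u(\tau)\neq 0$ has $N_{\F_{q^r}/\F_q}(u(\tau))=\prod_{i=0}^{r-1}u(\tau)^{q^i}$, and a standard computation (as in the proof of Proposition~\ref{prop:frob_finite}(iii), or directly via the product formula) identifies this norm with $(-1)^{r e}$ times the constant term of $P_\tau$ raised to an appropriate power, or more cleanly with $\mathrm{Res}(P_\tau,u)$ up to an explicit sign and a power depending only on parities. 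The upshot is that $N_{\F_{q^r}/\F_q}(u(\tau))$, as a function of the degree-$r$ place $\tau$, factors as a product $\prod_{P\mid u}\chi_P(\mathrm{Frob}_\tau)$ of values of the ``residue'' or power-residue type, so that it is governed by the splitting of $\tau$ in a fixed abelian extension of $\F_q(t)$ — concretely, it is the image of $\mathrm{Frob}_\tau$ under a fixed homomorphism to $\F_q^\times$ coming from the curve $y=u(t)$, i.e.\ from the Kummer-type cover $\mathrm{Spec}\,\F_q[t,y]/(y^{q-1}-u(t)^{?})$ or, most transparently, from the multiplicative group: the map $P\mapsto N_{\F_{q^{\deg P}}/\F_q}(u(\alpha_P))\in\F_q^\times$ is a Dirichlet character on $\F_q[t]$ modulo $u$ (times the place at infinity), in fact it is $P\mapsto \left(\tfrac{u}{P}\right)$-flavoured but landing in all of $\F_q^\times$ via the norm-residue symbol.

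The key point is an equidistribution statement: as $\tau$ ranges over degree-$r$ places of $\F_q(t)$ coprime to $u$, the value $N_{\F_{q^r}/\F_q}(u(\tau))\in\F_q^\times$ becomes equidistributed, with an error term $O(q^{r/2}/r)$ coming from the Riemann Hypothesis for the relevant $L$-functions, provided the associated characters are nontrivial. This is where the hypothesis that $u$ is not a proper power enters: one must check that for every nontrivial character $\psi$ of $\F_q^\times$, the composite $\psi\bigl(N_{\F_{q^r}/\F_q}(u(\cdot))\bigr)$ is a nontrivial Hecke/Dirichlet character modulo $u\cdot\infty$ of $\F_q[t]$. If $u$ were an $l$-th power $u=v^l$ with $l\mid q-1$, the characters of order dividing $l$ would collapse and the value would be constrained to lie in $(\F_q^\times)^l$, so the lemma would genuinely fail; conversely, when $u$ has at least one prime factor of multiplicity coprime to $q-1$ (which follows from $u$ not being a proper power together with a short elementary argument on the multiplicities), the character is a primitive character of conductor dividing $\mathrm{rad}(u)\cdot\infty$ of some nontrivial order, hence nontrivial, so its $L$-function is a polynomial and the Weil bound applies. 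Summing over the $q-1$ characters of $\F_q^\times$ by orthogonality gives
\[
\#\{\tau:\deg\tau=r,\ u(\tau)\neq0,\ N(u(\tau))=b\}=\frac{1}{q-1}\cdot\frac{q^r}{r}+O\!\left(\frac{q^{r/2}}{r}\right),
\]
which is positive once $r\ge r_0(u)$.

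The main obstacle I anticipate is the bookkeeping in the first step: making precise the claim that $\tau\mapsto N_{\F_{q^r}/\F_q}(u(\tau))$ is, up to explicit constants depending only on $r$ and $\deg u$ modulo $2$, the evaluation of a fixed Dirichlet character of $\F_q[t]$ at the prime $P_\tau$, and in particular verifying that this character is primitive (or at least identifying its conductor and order exactly) so that one may legitimately invoke the Riemann Hypothesis for its $L$-function. Once the character is shown to be nontrivial — which is precisely the content of the ``not a proper power'' hypothesis — the analytic input is completely standard (see e.g.\ \cite{Ros02}, especially the treatment of Dirichlet characters and their $L$-functions over $\F_q[t]$). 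An alternative route, avoiding Dirichlet characters, is to work directly with the affine curve $C:y^{q-1}=u(t)$ (or rather with the fibre product of the $q-1$ Kummer covers given by the distinct prime factors of $u$ with their multiplicities) and apply the Lang–Weil / Weil bound to count $\F_{q^r}$-points of $C$ with $t$ of exact degree $r$; the nontriviality condition becomes the geometric irreducibility of the relevant cover, which again is equivalent to $u$ not being a proper power in $\overline{\F_q}[t]$. Either way the proof is short modulo these standard facts.
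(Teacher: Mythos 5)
Your proposal is correct and rests on the same two pillars the paper uses: orthogonality over the characters of $\F_q^\times$ to isolate the target value $b$, and the Weil bound / RH for curves to control the resulting character sums, with the ``not a proper power'' hypothesis supplying exactly the nontriviality needed to make the error term nontrivial. The difference is one of packaging: the paper stays entirely inside $\F_{q^r}$, writing the sum as $\sum_{\tau\in\F_{q^r}}\psi(u(\tau))$ with $\psi=\chi\circ N_{\F_{q^r}/\F_q}$ a multiplicative character of $\F_{q^r}^\times$, and invokes the standard Weil bound $\bigl|\sum_\tau\psi(u(\tau))\bigr|\le(\deg u-1)q^{r/2}$ directly; the degree-$r$ condition is then handled by a crude subtraction of the $O(q^{r/2})$ lower-degree elements. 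You instead reinterpret $N_{\F_{q^r}/\F_q}(u(\tau))$ as the evaluation of a Dirichlet character of $\F_q[t]$ modulo (roughly) $u\cdot\infty$ at the minimal polynomial of $\tau$, count degree-$r$ primes, and invoke RH for the associated $L$-functions. These are the same theorem in two costumes, but the paper's route is leaner: it sidesteps all the bookkeeping you flag as the main obstacle (identifying the Dirichlet character, its conductor, the place at infinity, primitivity). One small correction on that point: you do not actually need the character to be \emph{primitive} — nontriviality alone gives a polynomial $L$-function and a Weil bound; primitivity would only sharpen the implicit constant. And in the paper's framing the nontriviality check is immediate: for the Weil bound on $\sum_\tau\psi(u(\tau))$ one only needs that $u$ is not a $d$-th power (times a constant) for $d=\mathrm{ord}(\psi)$, and since $\mathrm{ord}(\psi)=\mathrm{ord}(\chi)\mid q-1\ge 2$ this is exactly what the ``$u$ is not an $l$-th power for any $l\ge 2$'' hypothesis grants, with no further argument about multiplicities or conductors.

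Your alternative via the affine Kummer cover $y^{q-1}=u(t)$ and Lang--Weil is again the same input in geometric language; it is correct but again heavier than needed. All three routes would work, with the paper's being the most economical.
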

	
	\begin{proof}
		This is a standard consequence of the Weil bound on exponential sums (in fact, even much weaker bounds suffice). We give the details for completeness.
		
		Let $b\in\F_q^\times$ and assume that for some $r$ and for any $\tau$ with $[\F_q(\tau):\F_q]=r$, $u(\tau)\neq 0$ we have $N_{\F_{q^r}/\F_q}(u(\tau))\neq b$. A trivial counting argument gives that 
		\begin{equation}\label{eq0}
			\#\{\tau\in\F_{q^r} : \F_q(\tau)\subsetneq \F_{q^r} \mbox{ or } u(\tau)=0\} \leq 2q^{r/2}+\deg u.
		\end{equation}

		Denote by $\chi_1,\ldots,\chi_{q-2}$ the set of nontrivial multiplicative characters of $\F_q^\times$. By the second orthogonality relation \cite[Proposition 4.2(2)]{Ros02} we have
		\begin{equation}\label{eq1}
			\sum_{i=1}^{q-2}\overline{\chi_i(b)}\chi_i(N_{\F_{q^r}/\F_q}(u(\tau)))=-1
		\end{equation}
		whenever $\F_{q^r}=\F_q(\tau),u(\tau)\neq 0$ and trivially 
		\begin{equation}\label{eq2}
			\left|\sum_{i=1}^{q-2}\overline{\chi_i(b)}\chi_i(N_{\F_{q^r}/\F_q}(u(\tau)))\right|\le q-1
		\end{equation}
		if either $u(\tau)=0$ or $\FF_q(\tau)\subsetneq \FF_{q^r}$. Applying \eqref{eq1} and \eqref{eq2} and then \eqref{eq0}, we obtain
		\begin{equation}\label{eq:weil1}
			\begin{split}
				\left|\sum_{i=1}^{q-2}\overline{\chi_i(b)}\sum_{\tau\in\F_{q^r}}\chi_i\left(N_{\F_{q^r}/\F_q}(u(\tau))\right) + q^r\right|&\leq \sum_{
					\substack{\tau\in \FF_{q^r}\\ u(\tau)=0 \mbox{\tiny \ or } \FF_q(\tau)\subsetneq \FF_{q^r}}
				}
				\left|\sum_{i=1}^{q-2}\overline{\chi_i(b)}\chi_i(N_{\F_{q^r}/\F_q}(u(\tau)))+1\right| 
				\\ & \le q(2q^{r/2}+\deg u).
			\end{split}
		\end{equation}
		
		By the Weil bound (see e.g. \cite[Theorem 1]{CaMo00}) we have
		$$\left|\sum_{\tau\in\F_{q^r}}\chi_i\left(N_{\F_{q^r}/\F_q}(u(\tau))\right)\right|\le (\deg u-1)q^{r/2}.$$ Using this and the triangle inequality, from  \eqref{eq:weil1},  we deduce that 
		\begin{align*}
			q(2q^{r/2}+\deg u) \geq q^r -\sum_{i=1}^{q-2} \left|\sum_{\tau\in\F_{q^r}}\chi_i\left(N_{\F_{q^r}/\F_q}(u(\tau))\right)\right|\geq q^r - (q-2)(\deg u-1)q^{r/2},
		\end{align*}
		which is impossible for $r$ sufficiently large (with $q,\deg u$ fixed).
	\end{proof}
	
	\begin{proof}[Proof of Proposition~\ref{prop:det}]
		Now we assume the setup of Proposition~\ref{prop:det} and in particular write $a_0=cu^k$ with $c\in\F_q^\times$, $u\in\F_q[t]$ monic and nonconstant such that $u$ is not an $l\ge 2$-th power of another polynomial. We also denote $\phi=\Fr_q|_{Z(h)}$. 
		
		Recall that by Lemma~\ref{lem:chebotarev} the group $G_f$ is generated by the union of the Frobenius classes $\Fr(f;\tau)$ for $\tau\in\overline{\F_q},\,a_0(\tau)\neq 0$.
		Let $\tau\in\overline{\F_q}$ such that $a_0(\tau)\neq 0$ and $r=[\F_q(\tau):\F_q]$. Then for 
		$\sigma\in\Fr(f;\tau)$ we have $\sigma|_{Z(h)}=\phi^r$ (since $h\in\F_q[t]$ we may identify 
		$Z(h)$ with $Z(h_\tau)$ and each specialization map for $f,\tau$ respects this 
		identification). By Proposition~\ref{prop: char poly} we also have 
		$$\det\sigma|_{Z(f)}=(-1)^{rn}N_{\F_{q^r}/\F_q}(a_0(\tau)),\quad
		\det\sigma|_{Z(h)}=(\det\phi)^r=\left((-1)^\eta h_0\right)^r$$ and consequently (recall that 
		$\rho(\sigma)$ is the quotient of $\sigma|_{Z(f)}$ by $\sigma|_{Z(h)}$) 
		$$\delta(\sigma)=(\phi^r,\det\rho(\sigma))=\left(\phi^r,(-1)^{r(n-\eta)}\frac{N_{\F_{q^r}/\F
				_q}(a_0(\tau))}{h_0^r}\right)=\left(\phi^r,\left((-1)^{n-\eta}\frac c{h_0}\right)^rN_{\F_{q^r}/\F
			_q}(u(\tau))^k\right).$$
		By Lemma~\ref{lem:weil} these elements generate the RHS of (\ref{eq:image_delta}) (note that the condition $u(\tau)\neq 0$ is equivalent to $a_0(\tau)\neq 0$) which concludes the proof of Proposition~\ref{prop:det}.
	\end{proof}
	
	\section{Upper right block}
	\label{sec:upper right block}
	The next proposition gives sufficient conditions for the Galois $G_f$ to contain a large unipotent block subgroup. 
	\begin{prop}\label{prop:upper right corner}
		Let $f=X^{q^n}+a_{n-1}X^{q^{n-1}}+\ldots+a_0X,a_i\in\F_q[t],a_0\neq 0$ be a $q$-additive polynomial, $h=\con_tf$ with $\log_q \deg h=\eta$. Assume that 
		\begin{enumerate}
			\item[(a)]$n\ge\max(\eta+4,2\eta+1)$.
			\item[(b)] $f$ has no proper $q$-additive divisor $g\in\overline{\F_q}[t][X]$ such that $\deg_tg>0$.
			\item[(c)] The image of the quotient representation $G_f\to\GL(Z(f)/Z(h))$ contains $\SL(Z(f)/Z(h))$.
		\end{enumerate}
		Let $e_1,\ldots,e_n$ be a basis of $Z(f)$ such that $e_1,\ldots,e_\eta$ is a basis of $Z(h)$. Then with respect to this basis $G_f=\Gal(f/\F_q(t))$ contains all matrices of the form $$\tmatrix {I_\eta}\star{I_{n-\eta}}.$$ 
	\end{prop}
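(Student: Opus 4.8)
The plan is to pass to the geometric Galois group $G_0:=\Gal\big(f/\overline{\F_q}(t)\big)$, a normal subgroup of $G_f$, and to prove the stronger assertion that $G_0$ already contains every matrix $\tmatrix{I_\eta}{\star}{I_{n-\eta}}$. Write $V=Z(f)$, $W=Z(h)$ and $U=V/W$; these matrices form the unipotent radical $N$ of the stabilizer of $W$ in $\GL(V)$, and I identify $N$ with $\mathrm{Hom}(U,W)\cong U^{*}\otimes_{\F_q}W$ as a module for that stabilizer. We may assume $\eta\ge1$, since otherwise $W=0$ and the claim is vacuous.

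First I would record three facts about $G_0$. (i) The roots of $h\in\F_q[X]$ are algebraic over $\F_q$, so $W\subseteq\overline{\F_q}\subseteq\overline{\F_q}(t)$ and $G_0$ acts trivially on $W$; thus $G_0$ is contained in the pointwise stabilizer of $W$ in $\GL(V)$, which is exactly $N\rtimes\GL(U)$. (ii) By Lemma~\ref{lem:factor_inv_subspace} over the base field $\overline{\F_q}(t)$, every $G_0$-invariant subspace of $V$ is $Z(g)$ for a monic $q$-additive divisor $g\mid f$; since $g$ is monic and $f\in\overline{\F_q}[t][X]$, also $g\in\overline{\F_q}[t][X]$, and by hypothesis (b) either $\deg_t g>0$, which forces $g=f$ and $Z(g)=V$, or $\deg_t g=0$, in which case comparing coefficients of powers of $t$ shows $g\mid\con_t f=h$, so $Z(g)\subseteq W$. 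Hence every $G_0$-invariant subspace of $V$ is contained in $W$ or equal to $V$. (iii) $G_f/G_0$ is cyclic (it is the Galois group of a constant field extension), so $[G_f,G_f]\le G_0$; since $\SL_{n-\eta}(q)$ is perfect (as $n-\eta\ge3$ by (a)), hypothesis (c) forces the image $\overline G_U$ of $G_0$ in $\GL(U)$ to contain $\SL(U)$.

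Now $G_0$ normalizes $N$ and hence acts on $N=U^{*}\otimes W$, through $\overline G_U$ on $U^{*}$ and trivially on $W$, by (i) and (iii). Since $U^{*}$ is absolutely irreducible for $\SL(U)$ with endomorphism algebra $\F_q$ (using $n-\eta\ge2$), every $G_0$-submodule of $N$ equals $\mathrm{Hom}(U,W_0)$ for some subspace $W_0\subseteq W$; in particular $H_0:=G_0\cap N=\mathrm{Hom}(U,W_0)$, and the goal is reduced to proving $W_0=W$. Suppose $W_0\subsetneq W$. As $G_0$ acts trivially on $W\supseteq W_0$, the subspace $W_0$ is $G_0$-invariant, and I pass to $\overline V:=V/W_0\supseteq\overline W:=W/W_0\ne0$, with $\overline V/\overline W=U$; let $\overline G$ be the image of $G_0$ in $\GL(\overline V)$ and $N'=\mathrm{Hom}(U,\overline W)$ the corresponding unipotent radical. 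The crucial point is that $\overline G\cap N'=1$: an element of it lifts to some $g\in G_0$ acting trivially on $U$ and on $W$, that is, to $g\in G_0\cap N=H_0$, which maps to $0$ in $N'=N/H_0$. Therefore $\overline G$ embeds into $\GL(U)$ with image $\overline G_U\supseteq\SL(U)$, and the inflation--restriction sequence for $\SL(U)\lhd\overline G_U$, combined with $(U^{*})^{\SL(U)}=0$ and $H^{1}\big(\SL(U),U^{*}\big)=0$ --- this last being the cohomology vanishing of Higman and Pollatsek \cite{Hig62,Pol71}, applicable for $n-\eta$ in the range imposed by (a), and using that the transpose-inverse automorphism of $\SL(U)$ interchanges $U$ and $U^{*}$ --- yields $H^{1}(\overline G,U^{*})=0$. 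Since $\overline W$ is a trivial $\overline G$-module, $H^{1}\big(\overline G,\mathrm{Hom}(U,\overline W)\big)\cong H^{1}(\overline G,U^{*})\otimes\overline W=0$, so the extension $0\to\overline W\to\overline V\to U\to0$ of $\overline G$-modules splits. A $\overline G$-stable complement of $\overline W$ in $\overline V$ is nonzero (of dimension $n-\eta\ge4$), proper (since $\dim W_0<\eta$), and meets $\overline W$ trivially; its preimage in $V$ is then a $G_0$-invariant subspace that is neither contained in $W$ nor equal to $V$, contradicting (ii). Hence $W_0=W$, so $N=H_0\le G_0\le G_f$, which is the claim.

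The hard part is the cohomological core of the argument. The reduction modulo $W_0$ is carried out precisely so that $\overline G\cap N'$ collapses to the identity and the Higman--Pollatsek vanishing $H^{1}\big(\SL_{n-\eta}(q),(\F_q^{\,n-\eta})^{*}\big)=0$ becomes available; confirming that this vanishing genuinely holds in the range secured by hypothesis (a), and that neither the scalar quotient $\overline G_U/\SL(U)$ nor the coefficient space $W$ obstructs the above chain of deductions, is the delicate point, while the remainder is formal manipulation of invariant subspaces and of the module structure of $N$.
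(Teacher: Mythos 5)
Your proposal is correct and rests on the same cohomological pivot as the paper — the Higman--Pollatsek vanishing of $H^1$ for $\SL_m(q)$ acting on (a twist of) the natural module — but the surrounding argument is organized differently. The paper works with $\Gamma=\Gal(f/\F_{q^\nu}(t))$ where $\F_{q^\nu}=\F_q(Z(h))$, and its Lemma~\ref{lem:higman cor} proves the inclusion $\Gamma\supset\ker\rho$ by bringing the $\GL(U)$-submodule $K=\Gamma\cap\ker\rho$ of $M_{\eta\times(n-\eta)}(\F_q)$ into standard row form, projecting to a single row so as to land in $T_{1,n-\eta}$, and applying the explicit $1$-cocycle/coboundary/conjugation argument of Lemma~\ref{lem:higman} to exhibit a forbidden invariant subspace. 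You instead work with the geometric group $G_0=\Gal(f/\overline{\F_q}(t))$ (which makes the pointwise fixing of $Z(h)$ immediate, since its roots lie in $\overline{\F_q}$), identify $H_0=G_0\cap N$ as $\mathrm{Hom}(U,W_0)$ by the isotypic decomposition of $U^*\otimes W$, pass to the quotient $V/W_0$ where $\overline G$ meets the new unipotent radical trivially, and then split the extension $0\to\overline W\to\overline V\to U\to 0$ directly via the vanishing of $\mathrm{Ext}^1_{\overline G}(U,\overline W)=H^1(\overline G,U^*)\otimes\overline W$, again producing a forbidden invariant subspace. Both routes reduce to the same $H^1$-vanishing, and both use assumption (a) in exactly the same two places ($n-\eta\ge 4$ for the cohomology, and $n\ge 2\eta+1$ to ensure the constructed invariant subspace is not contained in $W$). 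Your version trades the paper's explicit matrix conjugations and the intermediate Lemma~\ref{lem:higman} for extension-theoretic language; the two are logically equivalent, and your exposition is arguably the cleaner of the two.
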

	
	We first prove auxiliary results and then prove the proposition.
	\subsection{Upper triangular block matrix groups}
	
	Let $F$ be a field, $m,n\ge 0$ integers. Denote
	\begin{equation}\label{eq:def tmn}T_{m,n}=\left\{\tmatrix {I_m}XA:A\in\GL_n(F),X\in M_{m\times n}(F)\right\}\leqslant\GL_{m+n}(F)\end{equation}
	and let $\rho:T_{m,n}\to\GL_n(F)$ be the projection homomorphism to the lower right $n$-block.
	
	Let $e_1,\ldots,e_{m+n}$ be the standard basis of $F^{m+n}$ and $W=\langle e_1,\ldots,e_m\rangle$. Note that $W$ is invariant for the action of $T_{m,n}$. 
	
	\begin{lem}\label{lem:higman} Assume $n\ge 4$. Let $\pi:T_{1,n}\to\GL_n(F)$ be the projection to the lower right $n\times n$ block and let $\Gamma\leqslant T_{1,n}$ be a subgroup such that
		\begin{enumerate}
			\item[(a)]$\pi$ is injective on $\Gamma$.
			\item[(b)]$\pi(\Gamma)\supset\SL_n(F)$.
		\end{enumerate}
		Then there exists $U\in T_{1,n}$ such that
		$$U^{-1}\Gamma U\subset\left\{\tmatrix 1{}A:A\in\GL_n(F)\right\}.$$
	\end{lem}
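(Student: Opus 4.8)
The plan is to reinterpret the hypothesis as a statement about group cohomology and then invoke the vanishing result of Higman. Write an element of $T_{1,n}$ as $\tmatrix 1{v(A)}A$ where $A\in\pi(\Gamma)$ and $v(A)\in M_{1\times n}(F)=F^n$. Since $\pi$ is injective on $\Gamma$, the assignment $A\mapsto v(A)$ is a well-defined function on $\pi(\Gamma)$. Multiplying two such matrices shows $\tmatrix 1{v(A)}A\tmatrix 1{v(B)}B=\tmatrix 1{v(A)B+v(B)}{AB}$, so the cocycle condition $v(AB)=v(A)B+v(B)$ holds; in other words $v$ is a $1$-cocycle for $\pi(\Gamma)$ acting on the natural module $F^n$ on the right. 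Conjugating $\Gamma$ by $U=\tmatrix 1{w}{I_n}$ replaces $v(A)$ by $v(A)+w(A-I_n)=v(A)-wA+w$, i.e. it modifies $v$ by the coboundary of $w$. So the lemma is exactly the assertion that this cocycle $v$ is a coboundary.

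**Next I would** reduce from $\pi(\Gamma)$ to $\SL_n(F)$. By hypothesis (b), $\SL_n(F)\leqslant\pi(\Gamma)$, so $v$ restricts to a $1$-cocycle on $\SL_n(F)$; if I can show that restriction is a coboundary, say $v(A)=w(A-I_n)$ for all $A\in\SL_n(F)$, I replace $\Gamma$ by $U^{-1}\Gamma U$ with $U=\tmatrix 1w{I_n}$ and may henceforth assume $v$ vanishes on $\SL_n(F)$. Then for arbitrary $A\in\pi(\Gamma)$ and $S\in\SL_n(F)$ the cocycle identity gives $v(SA)=v(S)A+v(A)=v(A)$ and $v(AS)=v(A)S+v(S)=v(A)S$; since $v(SA)=v(AS^{A^{-1}})$ hmm — more directly, $v$ is now constant on left $\SL_n$-cosets, and the relation $v(AS)=v(A)S$ forces $v(A)$ to be fixed by all of $\SL_n(F)$ acting on the right of $F^n$, hence $v(A)=0$ (the natural module has no nonzero $\SL_n$-fixed vectors when $n\ge 2$). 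Thus after the single conjugation $v\equiv 0$, which is the desired conclusion.

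**The crux is** the vanishing $H^1(\SL_n(F),F^n)=0$ for $n\ge 4$ (more precisely, the statement that every $1$-cocycle of $\SL_n(F)$ on the natural module is a coboundary), which is precisely what is furnished by Higman \cite{Hig62}; the hypothesis $n\ge 4$ in the lemma is there exactly to put us in the range where this cohomology vanishes (for small $n$ and small $q$ there are well-known exceptions). So the main obstacle, such as it is, is bookkeeping: matching the right-action conventions used here against the statement in \cite{Hig62}, and checking that $F$ may be an infinite field as well as a finite one (Higman's argument is characteristic-free and works over any field, using generators and relations for $\SL_n$, so this is not a genuine difficulty). I would also remark that injectivity of $\pi$ on $\Gamma$ is what makes $v$ single-valued in the first place; without it one only gets a cocycle on $\Gamma$ itself rather than on $\pi(\Gamma)$, and the argument would not close.

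**In summary**, the steps in order are: (1) encode $\Gamma$ by the function $v:\pi(\Gamma)\to F^n$ and verify it is a $1$-cocycle for the natural right action; (2) note conjugation by a unipotent $U$ changes $v$ by a coboundary; (3) apply Higman's theorem $H^1(\SL_n(F),F^n)=0$ ($n\ge 4$) to trivialize $v$ on $\SL_n(F)$ after one such conjugation; (4) use the cocycle relation together with $\SL_n(F)\leqslant\pi(\Gamma)$ and the absence of $\SL_n$-fixed vectors in $F^n$ to deduce $v\equiv 0$ on all of $\pi(\Gamma)$, which is exactly $U^{-1}\Gamma U\subset\{\tmatrix 1{}A\}$.
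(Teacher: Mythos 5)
Your proof is correct and takes essentially the same approach as the paper: encode $\Gamma$ as a $1$-cocycle $v\colon\pi(\Gamma)\to F^n$ for the right action on the natural module, observe that conjugation by a unipotent element of $T_{1,n}$ shifts $v$ by a coboundary, and invoke vanishing of first cohomology. The one real point of divergence is in how you obtain vanishing of $H^1$ for the group $\pi(\Gamma)$, which a priori may lie strictly between $\SL_n(F)$ and $\GL_n(F)$. The paper cites Pollatsek's theorem that $H^1(\GL_n(F),M_{1\times n}(F))=0$ for $n\ge 4$ and then asserts that Pollatsek's proof works verbatim for any intermediate group $\SL_n(F)\leqslant G\leqslant\GL_n(F)$. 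You instead use vanishing only for $\SL_n(F)$ itself (which is what the $n\ge 4$ hypothesis really controls) and then supply a short self-contained bootstrap to $\pi(\Gamma)$: after conjugating so that $v$ vanishes on $\SL_n(F)$, the cocycle identity combined with normality of $\SL_n(F)$ gives $v(A)S=v(AS)=v(A)$ for every $A\in\pi(\Gamma)$ and $S\in\SL_n(F)$, and the absence of nonzero $\SL_n$-fixed vectors in $F^n$ forces $v(A)=0$. This is marginally more elementary and avoids the need to re-inspect Pollatsek's argument for the intermediate-group generalization; the paper's route is a bit shorter in exposition but carries that implicit verification burden. Both are correct, and your aside about matching right-action conventions and about $F$ being allowed to be infinite is also sound.
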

	
	\begin{proof}
		By assumption (a) there is a map $\delta:\pi(\Gamma)\to M_{1\times n}(F)$ such that for each $A\in\pi(\Gamma)$ there exists a unique $a\in\Gamma$ of the form $a=\tmatrix 1{\delta(A)}A$. Since $\pi:\Gamma\to\GL_n(F)$ is an injective homomorphism we must have
		$$\tmatrix 1{\delta(AB)}{AB}=\tmatrix 1{\delta(A)}A\tmatrix 1{\delta(B)}B=
		\tmatrix 1{\delta(B)+\delta(A)B}{AB}$$
		and therefore $\delta(AB)=\delta(B)+\delta(A)B$. Hence $\delta$ is a 1-cocycle of $\pi(\Gamma)$ with coefficients in $M_{1\times n}(F)$ (with respect to the standard right action of $\pi(\Gamma)\leqslant\GL_n(F)$ on $M_{1\times n}(F)$).
		
		It is shown in \cite[Theorem 2.2]{Pol71} that assuming $n\ge 4$, we have $H^1(\GL_n(F),M_{1\times n}(F))=0$ (with respect to the standard right action) and the proof in \cite{Pol71} works verbatim to show $H^1(G,M_{1\times n}(F))=0$ for any group 
		$\SL_n(F)\leqslant G\leqslant\GL_n(F)$. Hence $\delta$ above is a 1-coboundary of $\pi(\Gamma)$, i.e. 
		it has the form $\delta(A)=u-uA$ where $u\in M_{1\times n}(F)$ 
		is a fixed row vector.
		
		Now taking $U=\tmatrix 1{-u}{I_n}$ we obtain for a general element $a=\tmatrix 1{u-uA}A$ of $\Gamma$,
		$$U^{-1}aU=\tmatrix 1{u}{I_n}\tmatrix 1{u-uA}A\tmatrix 1{-u}{I_n}=\tmatrix 1{}A$$
		and $U^{-1}\Gamma U$ has the required form.
	\end{proof}
	
	\begin{lem}\label{lem:higman cor}Assume $n\ge \max(4,m+1)$. Let $\Gamma\leqslant T_{m,n}$ be a subgroup such that 
		\begin{enumerate}\item[(a)]$\rho(\Gamma)\supset\SL_n(F)$.
			\item[(b)] Any proper $\Gamma$-invariant subspace of $F^{m+n}$ is contained in $W=\langle e_1,\ldots,e_m\rangle$.
		\end{enumerate}
		Then $\Gamma\supset\ker\rho$.\end{lem}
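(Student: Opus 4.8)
The plan is to deduce the statement from Lemma~\ref{lem:higman} by passing to a suitable quotient of the ambient space $F^{m+n}$. Write $N=\ker\rho=\left\{\tmatrix{I_m}{X}{I_n}:X\in M_{m\times n}(F)\right\}\cong M_{m\times n}(F)$; the goal is to show $N\subseteq\Gamma$. Set $M=\Gamma\cap N$. Since $N$ is normal in $T_{m,n}$, $M$ is normal in $\Gamma$, and as $N$ is abelian the conjugation action of $\Gamma$ on $M$ factors through $\rho(\Gamma)\supseteq\SL_n(F)$; a direct computation shows $\tmatrix{I_m}{Y}{A}$ conjugates $\tmatrix{I_m}{X}{I_n}$ to $\tmatrix{I_m}{XA^{-1}}{I_n}$, so $M$ is stable under $X\mapsto XA^{-1}$ for every $A\in\SL_n(F)$. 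Assume for contradiction that $M\neq N$; I will manufacture a proper $\Gamma$-invariant subspace of $F^{m+n}$ not contained in $W=\langle e_1,\ldots,e_m\rangle$, contradicting hypothesis (b).

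First I would promote $M$ to an $F$-subspace of $M_{m\times n}(F)$. Using the transvections $I+\lambda e_{jk}\in\SL_n(F)$ for $j\neq k$ (which needs only $n\ge 2$, and we have $n\ge4$): the difference $X-X(I-\lambda e_{jk})=\lambda Xe_{jk}$ lies in $M$ for all $X\in M$, $\lambda\in F$; applying the same reasoning with $e_{kj}$ and using $e_{jk}e_{kj}=e_{jj}$ gives $\lambda Xe_{jj}\in M$ for each $j$, and summing over $j$ yields $\lambda X\in M$. Thus $M$ is an $F[\SL_n(F)]$-submodule of $N$. Identifying $N=M_{m\times n}(F)$ with $F^m\otimes_F F^n$, on which $\SL_n(F)$ acts only through the second factor (the row module, which is absolutely irreducible since any endomorphism commuting with all transvections is scalar, over $F$ and over $\overline F$), $N$ is an isotypic semisimple module. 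Hence $M=U\otimes_F F^n$ for a unique subspace $U\subseteq F^m$, and $U\neq F^m$ because $M\neq N$. In particular every column of every $Y\in M$ lies in $U$.

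Now choose a hyperplane $W_0\subseteq F^m$ with $U\subseteq W_0$, and let $q:F^m\to F^m/W_0\cong F$ be the quotient map, inducing a surjection $\bar q:F^{m+n}=F^m\oplus F^n\to F\oplus F^n$ with kernel $W_0\oplus 0$. Since each element of $\Gamma$ fixes $F^m\oplus 0$ pointwise, it preserves $W_0\oplus0$, so $\Gamma$ descends to a subgroup $\bar\Gamma\leqslant T_{1,n}(F)$, with $\tmatrix{I_m}{Y}{A}$ mapping to $\tmatrix{1}{\bar Y}{A}$, where $\bar Y$ is the row vector obtained by applying $q$ to the columns of $Y$. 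This is well defined precisely because the columns of elements of $M$ lie in $W_0$, and the same fact shows the projection $\pi:\bar\Gamma\to\GL_n(F)$ is injective; its image is $\rho(\Gamma)\supseteq\SL_n(F)$. Lemma~\ref{lem:higman} (applicable since $n\ge4$) then gives $V\in T_{1,n}(F)$ with $V^{-1}\bar\Gamma V\subseteq\left\{\tmatrix{1}{}{A}:A\in\GL_n(F)\right\}$. That group stabilizes $0\oplus F^n$, so $\bar L:=V(0\oplus F^n)$ is $\bar\Gamma$-invariant of dimension $n$ and surjects onto the $F^n$-coordinate. Pulling back, $L:=\bar q^{-1}(\bar L)$ is $\Gamma$-invariant (because $\bar q$ intertwines the two actions, as $q(Yz)=\bar Y z$), has dimension $\dim W_0+n=m+n-1<m+n$, and surjects onto the last $n$ coordinates, hence $L\not\subseteq W$. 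This contradicts hypothesis (b), so $M=N$, i.e.\ $\Gamma\supseteq\ker\rho$.

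The main obstacle is the structural analysis of $M$ in the middle step: one must first upgrade the a priori merely additive, $\SL_n(F)$-stable subgroup $M$ to an honest $F$-subspace (the transvection-and-scaling argument), and then exploit the isotypic/absolute-irreducibility structure to get the precise description $M=U\otimes_F F^n$ with columns contained in $U$ — it is exactly this that makes the quotient homomorphism into $T_{1,n}(F)$ well defined. Once that is in place the argument is a clean reduction to Lemma~\ref{lem:higman}, and the group-cohomological input $H^1(G,M_{1\times n}(F))=0$ for $\SL_n(F)\leqslant G\leqslant\GL_n(F)$, $n\ge4$, is already supplied there, so no additional group theory is needed.
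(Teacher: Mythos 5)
Your proof is correct and uses the same overall strategy as the paper: identify $M:=\Gamma\cap\ker\rho$ as an $\SL_n(F)$-stable additive subgroup of $M_{m\times n}(F)$, pin down its structure (``all columns lie in a fixed proper subspace $U\subsetneq F^m$''), collapse to a subgroup of $T_{1,n}$, apply Lemma~\ref{lem:higman}, and pull back a proper $\Gamma$-invariant subspace not contained in $W$ to contradict hypothesis (b). The two places you diverge from the paper are both defensible and arguably cleaner. To get the structure of $M$, the paper declares $M$ to be a right $\GL_n(F)$-module (citing $n>m$ and elementary column operations to pass from $\SL_n$- to $\GL_n$-invariance) and then invokes the explicit Morita equivalence for $M_{m\times n}(F)$; you instead use transvections $I+\lambda e_{jk}$ directly to establish $F$-linearity of $M$ and then the isotypic-semisimple description of submodules of $F^m\otimes F^n$. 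Your route never needs $\GL_n$-invariance at all, which sidesteps the paper's slightly opaque step. For the reduction to $T_{1,n}$, the paper conjugates by a block-diagonal $\beta$ to put $M$ into the standard ``bottom $k$ rows zero'' form and then projects onto a single row, while you instead pass to a quotient of $F^{m+n}$ by a hyperplane $W_0\supseteq U$ inside the first $m$ coordinates; these are dual bookkeeping devices and both work. One small imprecision in your write-up: the homomorphism $\Gamma\to T_{1,n}$, $\gamma\mapsto\bar\gamma$, given by applying $q$ column-wise to the upper-right block, is well defined regardless of how $W_0$ is chosen; the containment $U\subseteq W_0$ is what makes $\pi$ \emph{injective} on $\bar\Gamma$ (hypothesis (a) of Lemma~\ref{lem:higman}), so the phrase ``This is well defined precisely because...'' should be replaced by a statement about injectivity.
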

	
	\begin{proof}
		
		The map $X\mapsto\tmatrix {I_m}X{I_n}$ gives an isomorphism of abelian groups $M_{m\times n}(F)\xrightarrow{\sim}\ker(\rho)$. For 
		$$\xi=\tmatrix {I_m}X{I_n}\in\ker(\rho)\cap\Gamma,\quad a=\tmatrix {I_m}YA\in\Gamma$$ we have $a^{-1}\xi a=\tmatrix {I_m}{XA}{I_n}$. Hence by assumption (a) the (additive) group $$K=\left\{X\in M_{m\times n}(F):\tmatrix {I_m}X{I_n}\in\ker(\rho)\cap\Gamma\right\}\leqslant M_{m\times n}(F)$$ is invariant under right multiplication by all $A\in\SL_n(F)$. Since we assume $n>m$ it is in fact invariant under right multiplication by $A\in\GL_n(F)$: indeed, if $X \in K$ then the row space of $X$ has dimension at most $m<n$, so for every $A \in \GL_n(F)$ there is a matrix $B\in \GL_n(F)$ such that $\det(B)=\det(A)^{-1}$ and $XB=B$. Since $\det(BA)=1$, it follows that $XA=XBA\in K$.
		
		Hence $K$ is a right $\GL_n(F)$-submodule of $M_{m\times n}(F)$ and it is well known\footnote{This follows for example from the explicit Morita equivalence of $M_n(F)$ and $F$, see \cite[\S 3.12]{Jac89}.}
		that it can be brought into the standard form
		\begin{equation}\label{eq:K_std}bK=\left\{\left[\begin{array}{c}Y\\0_{k\times n}\end{array}\right]:Y\in M_{(m-k)\times n}\right\}\end{equation}
		after applying a $\GL_n(F)$-automorphism to $M_{m\times n}(F)$, which is the same as multiplying by some $b\in\GL_m(F)$ on the left.
		
		Note that $$\tmatrix b{}{I_n}\tmatrix {I_m}X{I_n}\tmatrix 
		b{}{I_n}^{-1}=\tmatrix {I_m}{bX}{I_n},$$ hence taking $\beta=\stmatrix
		b{}{I_n}$ we have that $\left\{X:\stmatrix {I_m}X{I_n}\in\ker(\rho)\cap 
		\beta\Gamma\beta^{-1}\right\}=bK$ has the form 
		(\ref{eq:K_std}). Since $\beta\Gamma\beta^{-1}$ also satisfies the 
		conditions (a) and (b) in the statement of the lemma and 
		$\Gamma\supset\ker\rho$ if and only if $\beta\Gamma\beta^{-1}\supset\ker\rho$, we may 
		assume without loss of generality that $b=I_m$ and 
		\begin{equation}\label{eq:K_std2}K=\left\{\left[\begin{array}{c}Y\\0_{k\times n}\end{array}\right]:Y\in M_{(m-k)\times n}\right\}.\end{equation} We need to show that $K=M_{m\times n}(F)$, i.e. that $k=0$.
		
		Assume by way of contradiction that $k\ge 1$. Consider the homomorphisms $\rho_1:T_{m,n}\to T_{1,n}$, $\pi:T_{1,n}\to \GL_n(F)$ acting as follows:
		$$\rho_1:\left[\begin{array}{ccc}I_{m-1}&&Y\\&1&v\\&&A\end{array}\right]\mapsto \tmatrix 1vA,\quad\pi:\tmatrix 1vA\mapsto A.$$ Clearly $\rho=\pi\circ\rho_1$. It follows from (\ref{eq:K_std2}) that $\ker\rho\cap\Gamma=\ker \rho_1\cap\Gamma$ and therefore $\pi$ is injective on $\rho_1(\Gamma)$ and $\pi(\rho_1(\Gamma))=\rho(\Gamma)\supset \SL_n(F)$. The conditions of Lemma~\ref{lem:higman} are satisfied by $\rho_1(\Gamma)$ and therefore its conclusion holds, i.e. there exists $U\in T_{1,n}$ such that 
		$$U^{-1}\rho_1(\Gamma)U\subset\left\{\tmatrix 1{}A:A\in\GL_n(F)\right\}.$$
		
		Taking $C=\left[\begin{smallmatrix} I_{m-1}\\ &U\end{smallmatrix}\right]$ we have
		$$C^{-1}\Gamma C\subset\left\{\left[\begin{array}{ccc}I_{m-1}&&Y\\&{1}&\\&&A\end{array}\right]:Y\in M_{(m-1)\times n}(F),A\in\GL_n(F)\right\}$$ and $C^{-1}\Gamma C$ has an invariant proper subspace $V=\langle e_1,\ldots,e_{m-1},e_{m+1},\ldots,e_{m+n}\rangle$ not contained in $W$. Since $C$ preserves $W$, the proper subspace $CV$ is invariant for $\Gamma$ and not contained in $W$, contradicting assumption (b).
	\end{proof}
	
	\subsection{Proof of Proposition~\ref{prop:upper right corner}}
	
	In the present subsection we assume the setup of Proposition~\ref{prop:upper right corner}, including assumptions (a)-(c). In particular $e_1,\ldots,e_n$ is a basis of $Z(f)$ such that $e_1,\ldots,e_\eta$ is a basis of $Z(h)$. Using this basis we identify $\GL(Z(f))$ with $\GL_n(q)$. 
	
	Denote $\F_q(Z(h))=\F_{q^\nu}$ (recall that $h\in\F_q[X]$) and observe that under the above identification of linear operators with matrices we have
	$$\Gamma:=\Gal(f/\F_{q^\nu}(t))=\{\sigma\in G_f:\sigma|_{Z(h)}=\mathrm{id}\}=T_{\eta,n-\eta}\cap G_f$$
	using the notation (\ref{eq:def tmn}) with $F=\F_q$ and recalling that 
	$Z(h)$ is invariant under $G_f$. We will now check that the conditions of Lemma~\ref{lem:higman cor} hold for $\Gamma$, with $m,n$ replaced by $\eta,n-\eta$ and $W=Z(h)$.
	
	First of all assumption (a) in Proposition~\ref{prop:upper right corner} 
	implies that $n-\eta\ge\max(4,\eta+1)$. Next, denoting by 
	$\rho:T_{\eta,n-\eta}\to\GL_{n-\eta}(q)$ the projection to the lower 
	right block we see by assumption (c) in Proposition~\ref{prop:upper 
		right corner} that $\rho(G_f)\supset\SL_{n-\eta}(q)$. Since $\SL_{n-\eta}(q)$ is perfect ($n-\eta\ge 4$) and $[G_f,G_f]\subset\Gamma$ (because $\Gal(\F_{q^\nu}/\F_q)$ is cyclic) we have $\rho(\Gamma)\supset[\rho(G_f),\rho(G_f)]\supset\SL_{n-\eta}(q)$, verifying assumption (a) of Lemma~\ref{lem:higman cor} in our case. 
	
	Now let 
	$R\subsetneq Z(f)$ be a subspace invariant under $\Gamma=G_f\cap 
	T_{\eta,n-\eta}$. Then $g=\prod_{\alpha\in 
		R}(X-\alpha)\in\F_{q^\nu}[t,X]$ (since its coefficients are invariant 
	under $\Gamma=\Gal(f/\F_{q^\nu}(t))$) is a proper $q$-additive divisor of $f$ (by Proposition~\ref{prop:basic}(iii)), hence 
	by assumption (b) of Proposition~\ref{prop:upper right corner} we have 
	$\deg_tg=0$, i.e. $g\in\F_{q^\nu}[X]$ and therefore $g\mid \con_tf=h$ and 
	$R\subset Z(h)=W$, verifying assumption (b) of Lemma~\ref{lem:higman cor} in our case.
	
	We verified all the conditions of Lemma~\ref{lem:higman cor} and hence its conclusion holds, namely
	$$G_f\supset\Gamma\supset\left\{\tmatrix {I_\eta}X{I_{n-\eta}}:X\in M_{\eta\times(n-\eta)}(q)\right\},$$ which is exactly the assertion of Proposition~\ref{prop:upper right corner}.

	\section{Proof of main theorems}
	\label{sec:assembly}
	
	We devote the present section to the proof of Theorem~\ref{thm:2} which implies Theorem~\ref{thm:1}. At the end of the section we will also give the derivation of Corollary \ref{cor:irreduc} from Theorem \ref{thm:2}. First we recap the setup of Theorem~\ref{thm:2}, and then we assemble the partial information on $G_f$ provided by Propositions~\ref{prop:lower right corner}, \ref{prop:det} and \ref{prop:upper right corner} to prove it.
	
	Let $q$ be a fixed prime power, $d$ a fixed natural number. Fix a monic $h=X^{q^{\eta}}+h_{\eta-1}X^{q^{\eta-1}}+\ldots+h_0X\in\F_q[X],\,h_0\neq 0$ (so $h$ is separable). Finally fix $0\neq a_0\in\F_q[x]_{\le d}$ and write $a_0=cu^k$ with $u\in\F_q[t]\setminus\F_q$ monic, $c\in\F_q^\times$ and $k$ maximal among such representations ($k=0$ iff $a_0\in\F_q$). Henceforth all asymptotic notation will have an implicit constant or rate of decay which may depend on $q,d,h,a_0$ (or equivalently $q,d,\eta$, since if we fix these, there are only finitely many possibilities for $h,a_0$) as $n\to\infty$, where $n$ is a variable parameter.
	
	Denote
	\begin{equation}\label{eq:calf} \mathcal F=\left\{f=X^{q^n}+a_{n-1}X^{q^{n-1}}+\ldots+a_1X^q+a_0X\,\mid\,a_1,\ldots,a_{n-1}\in\F_q[t]_{\le d},\,\con_tf=h\right\}.\end{equation}
	Theorem~\ref{thm:2} will follow once we show that, as $n\to\infty$, for a $1-o(1)$ proportion of $f\in\mathcal F$ we have 
	\begin{multline}\label{eq:gamma_nhck}G_f=\Gamma_{n,h,c,k}:=\\ \left\{\tmatrix{D^r}YA:r\in\ZZ,
		\det(A)\in\left((-1)^{n-\eta}\frac c{h_0}\right)^r\F_q^{\times k},Y\in M_{\eta\times(n-\eta)}(q)\right\}\leqslant\GL_n(q),\end{multline}
	with respect to a suitable basis of $Z(f)$, with $D$ being the companion matrix of $\tilde h$. 
	
	
	By Proposition~\ref{prop:frob_finite}(ii) we may choose a basis $e_1,\ldots,e_\eta$ of $Z(h)$ such that $\phi=\Fr_q|_{Z(h)}$ has the matrix $D$. For any $f\in\mathcal F$ we choose an arbitrary basis $e_1,\ldots,e_n$ of $Z(f)$ that extends the above basis of $Z(h)$. All matrices of elements $\sigma\in G_f$ will be with respect to this basis.
	
	Recall that for any $f\in\mathcal F$ we have (with respect to a basis as above)
	$$G_f\leqslant T:=\left\{\tmatrix BYA:B\in\GL_\eta(q),Y\in M_{\eta\times(n-\eta)}(q),A\in\GL_{n-\eta}(q)\right\}.$$ Denote by 
	$\pi,\beta,\rho$ the projections from $T$ to the upper left $\eta\times\eta$ block, the upper right $\eta\times(n-\eta)$ block and the lower right $(n-\eta)\times(n-\eta)$ blocks respectively.
	
	First, by Proposition~\ref{prop:lower right corner}, for a $1-o(1)$ proportion of $f\in\mathcal F$ we have $\rho(G_f)\supset\SL_{n-\eta}(q)$. Next, by Proposition~\ref{prop:det} we see that $G_f$ is contained in $\Gamma_{n,h,c,k}$ and for a $1-o(1)$ proportion of $f\in\mathcal F$ we have
	\begin{equation}\label{eq:pirho}(\pi\times\rho)(G_f)=\left\{(D^r,A):r\in\ZZ,\det(A)\in\left((-1)^{n-\eta}\frac c{h_0}\right)^r\F_q^{\times k}\right\}\end{equation}
	(recall that $\phi=\Fr_q|_{Z(h)}$ has the matrix $D$ with respect to the basis $e_1,\ldots,e_\eta$).
	
	Next, by Proposition~\ref{prop:irred} for a $1-o(1)$ proportion of $f\in\mathcal F$ condition (b) of Proposition~\ref{prop:upper right corner} is satisfied and by the first observation in the preceding paragraph so is condition (c). Hence for $n$ large enough the assertion of Proposition~\ref{prop:upper right corner} applies and we see that for a $1-o(1)$ proportion of $f\in\mathcal F$ we have
	\begin{equation}\label{eq:ulc_Gf}\ker(\pi\times\rho)=\left\{\tmatrix{I_\eta}Y{I_{n-\eta}}:Y\in M_{\eta\times(n-\eta)}(q)\right\}\subset G_f.\end{equation} 
	
	By (\ref{eq:pirho}),(\ref{eq:ulc_Gf}) and the definition of $\Gamma_{n,h,c,k}$ we see that $G_f=\Gamma_{n,h,c,k}$ for a $1-o(1)$ proportion of $f\in\mathcal F$. This concludes the proof. \qed
	
	\begin{proof}[Proof of Corollary \ref{cor:irreduc}]
		First assume $f\in \mathcal F=\mathcal F_{n,h,c,k}$ as defined in (\ref{eq:calf}). Since $\Gamma_{n,h,c,k}$ is transitive on $Z(f)\setminus Z(h)=Z(f/\con_tf)$, by Theorem 2 asymptotically almost surely $G_f=\Gamma_{n,h,c,k}$ is transitive on $Z(f/\con_tf)$ and hence $f/\con_tf$ is irreducible (since $a_0\neq 0$ it is separable). Summing over all possible $h,c,k$ and noting that
		$$\sum_{h,c,k}\lim_{n\to\infty}\Prob(f\in\mathcal F_{n,h,c,k}\mid a_0\neq 0)=1$$
		(here $f$ is a random $q$-additive polynomial as in the statement of Theorem \ref{thm:1}) we conclude
		$$\lim_{n\to\infty}\Prob(f/\con_tf\mbox{ is irreducible}\mid a_0\neq 0)=1.$$
		
		It remains to deal with the case $a_0=0$. In this case we can write $f(t,X)=f_1(t,X^q)$. Then $\con_tf=(\con_tf_1)(X^q)$ and if $a_1\neq 0$ we can apply the above to $f_1$ and use Uchida's irreducibility criterion \cite[Lemma 12.4.1]{FrJa08} to conclude that $f/\con_tf$ is asymptotically almost surely irreducible. If $a_0=a_1=0$ we can iterate this procedure (with probability arbitrarily close to 1 we have $a_i\neq 0$ for some bounded $i$).

	\end{proof}

	\section{The large box model}\label{sec:largebox}
	The large box model considers the same problem of a $q$-additive polynomial with random coefficients taken from $\FF_q[t]_{\le d}$, this time fixing $n$ and $q$ and letting $d \rightarrow \infty$. The proof that the Galois group is almost always $\GL_n(q)$ is short, and mostly relies on a quantitative Hilbert Irreducibility Theorem combined with a result of Dickson \cite{Dickson_1911}, the proof of which we include for completeness.
	
	\begin{theorem}\label{large_box}
		Fix $n>0$ and $q$ a prime power. 
		Let $a_0,\ldots a_{n-1}$ be independent random variables, taking values in $\FF_q[t]_{\le d}$ uniformly. Let  $f = X^{q^n} + a_{n-1} X^{q^{n-1}} + \cdots + a_0 X$ and put $G_f$ the Galois group of $f$ over $\FF_q(t)$. Then 
		\[
		\lim_{d\to \infty} {\rm Prob} (G_f =\GL_n(q)) = 1.  
		\]
	\end{theorem}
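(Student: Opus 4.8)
The plan is to deduce this from Dickson's computation of the Galois group of the \emph{generic} $q$-additive polynomial together with a quantitative form of Hilbert's irreducibility theorem over $\F_q(t)$. Let $\mathbf{T}=(T_0,\ldots,T_{n-1})$ be algebraically independent indeterminates and put $f_{\mathbf{T}}=X^{q^n}+T_{n-1}X^{q^{n-1}}+\ldots+T_0X\in\F_q[\mathbf{T}][X]$. Dickson's theorem \cite{Dickson_1911} (whose proof is reproduced below for completeness) says that $\Gal(f_{\mathbf{T}}/\F_q(\mathbf{T}))=\GL_n(q)$, acting in the natural way on the $n$-dimensional $\F_q$-space $Z(f_{\mathbf{T}})$. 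I would first upgrade this to the larger field $\F_q(t,\mathbf{T})=\F_q(t)(\mathbf{T})$: writing $E$ for the splitting field of $f_{\mathbf{T}}$ over $\F_q(\mathbf{T})$, the theorem on natural irrationalities gives $\Gal\big(f_{\mathbf{T}}/\F_q(t,\mathbf{T})\big)\cong\Gal\big(E/E\cap\F_q(t,\mathbf{T})\big)$, and since $E\cap\F_q(t,\mathbf{T})$ is a finite extension of $\F_q(\mathbf{T})$ inside the purely transcendental extension $\F_q(\mathbf{T})(t)$ it must equal $\F_q(\mathbf{T})$ (the relative algebraic closure of any field $K$ inside $K(t)$, $t$ transcendental, is $K$). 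Hence $\Gal(f_{\mathbf{T}}/\F_q(t,\mathbf{T}))=\GL_n(q)$ as well.

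Now I would apply Hilbert's irreducibility theorem: the global field $\F_q(t)$ is Hilbertian, so there is a thin subset $\Omega\subset\AA^n(\F_q(t))$ such that for every $\mathbf{a}=(a_0,\ldots,a_{n-1})\in\F_q(t)^n\setminus\Omega$ the specialization $f_{\mathbf{a}}=X^{q^n}+a_{n-1}X^{q^{n-1}}+\ldots+a_0X$ is separable and $\Gal(f_{\mathbf{a}}/\F_q(t))$, viewed via its action on $Z(f_{\mathbf{a}})$, equals the full $\GL_n(q)$; enlarging $\Omega$ we may assume it also contains the hyperplane $\{a_0=0\}$. It remains to bound the number of points of $\Omega$ in the box $B_d=(\F_q[t]_{\le d})^n$, which has $\#B_d=q^{(d+1)n}$ elements; here I would invoke a quantitative form of Hilbert's irreducibility theorem over $\F_q(t)$ (with polynomials of degree $\le d$ playing the role of integers of height $\le q^d$), which yields $\#(\Omega\cap B_d)=o(q^{(d+1)n})$ as $d\to\infty$. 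Since $f$ is uniform on $B_d$, this gives
\[
\Prob\big(G_f\neq\GL_n(q)\big)\le\Prob(\mathbf{a}\in\Omega)=\frac{\#(\Omega\cap B_d)}{q^{(d+1)n}}\longrightarrow 0\qquad(d\to\infty),
\]
which is exactly the claim.

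The one nonroutine point is the quantitative bound $\#(\Omega\cap B_d)=o(\#B_d)$. A thin set is a finite union of sets of two types: (1) $\{\mathbf{a}:g(\mathbf{a})=0\}$ for a fixed nonzero $g\in\F_q(t)[\mathbf{T}]$ (which also absorbs $\{a_0=0\}$), and (2) $\pi\big(Y(\F_q(t))\big)$ for a dominant morphism $\pi\colon Y\to\AA^n$ of degree $\ge 2$ with $Y$ irreducible. For type (1) a Schwartz--Zippel estimate over the finite set $\F_q[t]_{\le d}$ gives $\#\{\mathbf{a}\in B_d:g(\mathbf{a})=0\}\le(\deg g)\,q^{(d+1)(n-1)}=(\deg g)\,q^{-(d+1)}\#B_d$. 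For type (2) one bounds the height of a preimage $y\in Y(\F_q(t))$ of $\mathbf{a}\in B_d$ linearly in $d$ and counts $\F_q(t)$-points of bounded height on the covering variety, obtaining the expected power saving; this is where the genuine quantitative theorem is needed rather than the mere (qualitative) irreducibility of resolvents, since the naive preimage count loses a constant multiple of $d$ in the exponent and does not by itself suffice.
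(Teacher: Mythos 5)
Your proposal is correct and follows essentially the same route as the paper: Dickson's computation of the generic Galois group $\GL_n(q)$ over the function field, followed by a quantitative Hilbert irreducibility theorem over $\F_q(t)$ to bound the proportion of exceptional specializations in the box. The paper simply cites an explicit such theorem (\cite[Corollary~3.5]{barysoroker2019explicit}) in place of the cover-point-counting argument you sketch for type-(2) thin sets, and reproduces Dickson's argument directly over $\F_q(t)$ rather than upgrading from $\F_q(\mathbf{T})$ via natural irrationalities as you do, but these are cosmetic differences.
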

	\begin{proof}
		In this case, we consider the generic polynomial $F(A_0, ..., A_{n-1};X) = X^{q^n} + A_{n-1} X^{q^{n-1}} + \cdots + A_0 X\in \FF_q(t)(A_0, ..., A_{n-1})[X]$ where $q$ is a power of the prime $p$ and $A_0,\ldots,A_n$ are independent variables. By Proposition~\ref{prop:basic}(i) the roots $Z(F)$ of $F$ form an $\F_q$-linear subspace of $\overline{\FF_q(t)(A_0, ..., A_{n-1})}$, and since $|Z(F)|=q^n$ ($F$ is separable because $A_0\neq 0$), the dimension of $Z(F)$ is $n$. Pick a basis $e_1, ..., e_n$ for $Z(F)$.
		
		By \cite{Dickson_1911} (see also \cite[Theorem~1.2]{wilkerson1983primer}) the generic Galois group $G_F$ of $F$ over $\FF_q(t)$ is $\GL_n(q)$. The proof is short: the splitting field of $F$ over $\FF_q(t)(A_0,\ldots,A_{n-1})$ is $L=\FF_q(t)(e_1, ..., e_n)$ and it has transcedence degree $n$ over $\FF_q(t)$, hence the $e_i$ are algebraically independent over $\FF_q(t)$ and so $\GL_n(q)\leqslant\mathrm{Aut}(L/\F_q(t))$ (via the action on $e_1,\ldots,e_n$). Now $\GL_n(q)$ permutes $Z(F)$, hence \[\FF_q(t)(A_0, ..., A_{n-1})\subseteq \FF_q(t)(e_1, ..., e_n)^{\GL_n(q)}\] and $\GL_n(q)\le G_F$. Conversely, since $F$ is additive we have $G_F\leqslant\GL_n(q)$, so $G_F=\GL_n(q)$.
		
		
		For a given tuple $(a_0, ..., a_{n-1})\in \FF_q[t]^n$ we denote by $f(X)=F(a_0, ..., a_{n-1})(X)=X^{q^n} + a_{n-1} X^{q^{n-1}} + \cdots + a_0 X$ the specialization of $F$ at $(a_0, ..., a_{n-1})$. We suppress the $a_i$ and write $G_f$ for the Galois group of $f$ over $\FF_q(t)$. Note that almost all $f$ are separable (as $d\to\infty$) because $a_0\neq 0$. In this case is well known that $G_f \leqslant G_F$ (since $f$ is a specialization of $F$). Denote by $B(d)$ the number of tuples $(a_0, ..., a_{n-1})\in \FF_q[t]_{\le d}^n$ such that $G_f\ncong G_F$. In the large box model, we fix $n$ and $q$ and let $d \rightarrow \infty$.
		
		By the quantitative Hilbert's Irreducibility Theorem for $\F_q(t)$ \cite[Corollary~3.5]{barysoroker2019explicit}, \[\frac{B(d)}{q^{dn}}=O\left(dq^{-d/2}\right) \mbox{ for }n, q\mbox{ fixed}, d\rightarrow \infty.\]
		
		Therefore, for almost all tuples $(a_0, ..., a_{n-1})\in \FF_q[t]_{\le d}^n$, the Galois group $G_f=\GL_n(q)$ in the large box model.
	\end{proof}
	
	\section{The large finite field model}\label{sec:large q}
	Let $a_0,\ldots a_{n-1}$ be independent random variables taking values in $\FF_q[t]_{\le d}$ uniformly. Consider the random polynomial $f = X^{q^n} + a_{n-1} X^{q^{n-1}} + \cdots + a_0 X$. We aim to fix $n, d$ and describe the Galois group of $f$ over $\FF_q(t)$ as $q \to \infty$, and our asymptotic notation in this section should be read in the $q \rightarrow \infty$ regime (with $n,d$ fixed). The main ingredients of the proof are the following propositions.
	
	Firstly, we assert that we can almost always find specializations with characteristic polynomials of any desired factorization type.
	
	\begin{prop}\label{prop:spec_fact} Let $n,d$ be fixed natural numbers. Then for all but $O\left(q^{n(d+1)-1}\right)$ of the tuples $(a_0,\ldots,a_{n-1})\in\left(\F_q[t]_{\le d}\right)^n$ and any natural numbers $n_1,\ldots,n_k$ with $\sum_{i=1}^kn_i=n$ there exists $\tau\in\F_q$ such that the polynomial
		$${\tilde f}_\tau=X^n+a_{n-1}(\tau)X^{n-1}+\ldots+a_0(\tau)\in\F_q[X]$$ factors into distinct irreducible polynomials of degree $n_1,\ldots,n_k$.\end{prop}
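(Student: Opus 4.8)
The plan is to reduce the statement to a statement about a single polynomial $\tilde f(t,X)\in\F_q[t][X]$, viewed as a curve, and then apply a Bertini/Chebotarev-type argument over the function field $\F_q(t)$. First I would fix a factorization type $n_1,\ldots,n_k$ of $n$; there are only $O(1)$ such types since $n$ is fixed, so it suffices to handle each type separately and take the union of the exceptional sets. For a given tuple $\bfa=(a_0,\ldots,a_{n-1})$ with $a_0\neq 0$, consider the associated polynomial $\tilde f=\tilde f(t,X)=X^n+a_{n-1}X^{n-1}+\ldots+a_0\in\F_q[t][X]$ and its splitting field $N$ over $\F_q(t)$, with arithmetic monodromy group $G=\Gal(N/\F_q(t))\leq S_n$ and geometric monodromy group $G^{\mathrm{geo}}=\Gal(N/\overline{\F_q}(t))\trianglelefteq G$. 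By the Lang--Weil/explicit Chebotarev theorem for function fields (as in \cite[Appendix]{barysoroker2019explicit} or the effective version used already in the proof of Theorem~\ref{large_box}), if $G^{\mathrm{geo}}=G$ then for every conjugacy class $C\subset G$ the number of $\tau\in\F_q$ with $\tilde f_\tau$ separable whose Frobenius class is $C$ is $\frac{|C|}{|G|}q+O_{n,d}(q^{1/2})$, which is positive for $q$ large; choosing $C$ to be the class of a permutation with cycle type $(n_1,\ldots,n_k)$ (which lies in $S_n$ and hence in $G$ whenever $G=S_n$) then yields a $\tau$ with $\tilde f_\tau$ factoring into distinct irreducibles of those degrees. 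So the real task is to show that for all but $O(q^{n(d+1)-1})$ tuples $\bfa$ one has $G=S_n$.

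The bound $O(q^{n(d+1)-1})$ out of $q^{n(d+1)}$ tuples is exactly a "codimension one" count, so the plan is to show that the set of $\bfa$ for which $G\subsetneq S_n$, or $G^{\mathrm{geo}}\neq G$, or $a_0=0$, is contained in a proper Zariski-closed subset of the affine space $\AA^{n(d+1)}_{\F_q}$ parametrizing the coefficients of $a_0,\ldots,a_{n-1}\in\F_q[t]_{\le d}$, and then invoke the elementary bound that an $\F_q$-hypersurface in $\AA^m$ has $O_m(q^{m-1})$ points. The condition $a_0\equiv 0$ already cuts out such a hypersurface. For the generic tuple, $G=S_n$ follows from Hilbert irreducibility applied to the generic additive polynomial: by Dickson's theorem (recalled in the proof of Theorem~\ref{large_box}) the generic Galois group of $F$ is $\GL_n(q)$, but here it is cleaner to work with $\tilde f$ directly and note that the generic associated polynomial $X^n+A_{n-1}X^{n-1}+\ldots+A_0$ has Galois group $S_n$ over $\F_q(t)(A_0,\ldots,A_{n-1})$ — this is the classical fact that the general polynomial has full symmetric Galois group, and it holds in any characteristic. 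Specializing the $A_i$ to elements of $\F_q[t]_{\le d}$, Hilbert irreducibility over $\F_q(t)$ (again \cite{barysoroker2019explicit}) gives $G=S_n$ outside a thin set, and a thin set in $\AA^{n(d+1)}$ is contained in a hypersurface, contributing $O(q^{n(d+1)-1})$ tuples.

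The remaining subtlety, and what I expect to be the main obstacle, is the condition $G^{\mathrm{geo}}=G$: Chebotarev over $\F_q(t)$ only detects the geometric Frobenius classes inside $G$, and if $G^{\mathrm{geo}}$ is a proper subgroup one might fail to realize the desired cycle type as a Frobenius class over $\F_q$. However, the discrepancy $G/G^{\mathrm{geo}}$ is cyclic (a quotient of $\Gal(\overline{\F_q}/\F_q)$), and for $G=S_n$ with $n\geq 3$ the only normal subgroups are $1$, $A_n$, $S_n$; so the only way to have $G^{\mathrm{geo}}\subsetneq G=S_n$ is $G^{\mathrm{geo}}=A_n$, which forces the discriminant of $\tilde f$ (as a polynomial in $t$) to be a square in $\F_q[t]$ up to constants — again a closed condition on $\bfa$, cutting out a hypersurface (or, for $n=2$, a similarly degenerate locus), hence $O(q^{n(d+1)-1})$ tuples. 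A careful treatment of this point, together with the small-$n$ cases and the bookkeeping of how thin sets and the various closed conditions intersect the coefficient space, is where the technical work lies; the Bertini-type lemmas collected in Appendix~\ref{sect:spec_fact} are presumably set up precisely to package this argument, and I would cite them to handle the geometric irreducibility of the relevant cover and the resulting hypersurface bounds uniformly.
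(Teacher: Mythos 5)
Your overall strategy matches the paper's: show the arithmetic/geometric Galois group of the associated polynomial $\tilde f(t,X)$ is $S_n$ for all but $O(q^{n(d+1)-1})$ coefficient tuples, then invoke Chebotarev over $\F_q(t)$ to realize any prescribed factorization type. However, the step where you convert ``generically $S_n$'' into a count of $O(q^{n(d+1)-1})$ bad tuples contains a genuine error.

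You assert that Hilbert irreducibility gives $G=S_n$ outside a thin set, and that ``a thin set in $\AA^{n(d+1)}$ is contained in a hypersurface.'' This is false in general: thin sets of type II (images of degree-$\geq 2$ covers, such as the set of polynomials whose discriminant is a perfect square) are typically Zariski-dense, and over $\F_q$ they contain on the order of a constant fraction of all points times $q^{-1/2}$, not $O(q^{m-1})$. The same problem applies to your handling of the geometric-vs-arithmetic monodromy gap: the locus where $\Disc_X(\tilde f)(t)$ is a square in $\F_q[t]$ is exactly a thin set of this type, not a hypersurface, so it does not give the codimension-one bound you need. Moreover, the reference you invoke \cite{barysoroker2019explicit} addresses specializations of the form $\bfA \mapsto \bfa(t)\in\F_q[t]_{\leq d}^m$ with $d\to\infty$; here the parameters $\bfa$ are \emph{constants} in $\F_q^m$ with $d$ fixed, which is a structurally different situation. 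The paper's actual route circumvents all of this: it proves a Bertini--Noether theorem over the algebraically closed field $k=\overline{\F_q}$, where the Galois-group-drop locus (in contrast to the arithmetic thin set) genuinely \emph{is} Zariski closed, cut out by an explicit hypersurface whose degree is bounded via a resolvent and Noether's irreducibility theorem; the $O(q^{m-1})$ count then follows from Lang--Weil. Working over $\overline{\F_q}$ also automatically handles the geometric monodromy issue, since the geometric group of a good specialization being $S_n$ forces everything above it to be $S_n$ as well.

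One further point: to feed the Bertini--Noether machine you must exhibit a single specialization $\bfa_0\in\overline{\F_q}^{n(d+1)}$ at which $\Gal(F(\bfa_0,t,X)/\overline{\F_q}(t))=S_n$; the classical fact that the general polynomial $X^n+A_{n-1}X^{n-1}+\ldots+A_0$ has Galois group $S_n$ over $K(A_0,\ldots,A_{n-1})$ concerns the generic fiber, not a fiber over a $\overline{\F_q}$-point, so it does not by itself produce the required $\bfa_0$. The paper builds $\bfa_0$ explicitly by finding $f\in\overline{\F_q}[X]$ of the form $X^2 h(X)$ with $f(X)-t$ indecomposable and simply ramified over $0$, so that $\Gal(f(X)-t/\overline{\F_q}(t))=S_n$ by primitivity plus a transposition. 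You defer the ``main technical work'' to the Bertini-type lemmas in the appendix, which is the right instinct, but the specific bridging claims you make to get there do not hold.
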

	
	We will prove this proposition in Appendix~\ref{sect:spec_fact}.
	
	We will need an analogue of Proposition~\ref{prop:irreducible subgroups} for the large $q$ regime. In this case we will need a slight modification of the result. Instead of classes $\mathcal{C}_2$ and $\mathcal{C}_3$ we consider the following sets: \begin{align*}\mathcal{D}_2&=\left\{M \setminus H : M= H\wr S_l \in \mathcal{C}_2 \mbox{ for some }l \mid n \mbox{ and } H \cong \GL_{n/l}(q^l)\right\} \\ \mathcal{D}_3&=\left\{M \setminus H : M= H.b \in \mathcal{C}_3 \mbox{ for some prime }b \mid n \mbox{ and } H \cong \GL_{n/b}(q^b)\right\}\end{align*}
	
	\begin{prop}\label{prop:small_n_count_polys}
		
		Let $\CJ=\CD_2\cup \CD_3 \cup \bigcup_{i=4}^8\mathcal C_i \cup \CS.$ The set of characteristic polynomials of elements in $\bigcup\CJ$ has size $o_n\left(q^{n}\right)$.
		
	\end{prop}
	We will prove this proposition in Appendix~\ref{sec:subgroups large q}.
	
	Assuming the propositions, we are able to describe the Galois group in the limit.
	
	\begin{theorem}\label{large_field}
		Fix $d, n>0$. Let $q$ be a prime power. Let $a_0,\ldots a_{n-1}$ be independent random variables, taking values in $\FF_q[t]_{\le d}$ uniformly. Let $f = X^{q^n} + a_{n-1} X^{q^{n-1}} + \cdots + a_0 X$ and let $G_f$ be the Galois group of $f$ over $\FF_q(t)$. Then
		\[
		\lim_{q\to \infty} {\rm Prob} (G_f = \GL_n(q)) = 1.  
		\]
	\end{theorem}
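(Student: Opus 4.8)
The plan is to split the assertion $G_f=\GL_n(q)$ into the two statements $\SL_n(q)\leqslant G_f$ and $\det(G_f)=\F_q^\times$, and to prove each of them with probability $1-o(1)$ as $q\to\infty$ ($d,n$ fixed). The easy half is the determinant. Since each $a_i$ is uniform in $\F_q[t]_{\le d}$ with $d\ge1$, with probability $1-O(q^{-1})$ we have $a_0\neq0$ (so that $G_f\leqslant\GL_n(q)$ is defined), $a_0$ squarefree of positive degree, and $\con_tf=X$; the last two follow from the prime-counting estimates over $\F_q[t]$ already used in Lemma~\ref{lem:family count}. On this event $\con_tf=X$ gives $\eta=0$ and squarefreeness of $a_0$ gives $k=1$ in the decomposition $a_0=cu^k$, so Proposition~\ref{prop:det} yields $\det(G_f)=\F_q^\times$. (For $n=1$ this already finishes the proof, so assume $n\ge2$.) It remains to prove $\Prob(\SL_n(q)\not\leqslant G_f)=o(1)$, and we may additionally discard the $O(q^{n(d+1)-1})$ tuples that are not \emph{good} for Proposition~\ref{prop:spec_fact}.

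For a good tuple, Proposition~\ref{prop:spec_fact} applied with the partition $(n)$ produces $\tau\in\F_q$ with $\tilde f_\tau$ irreducible of degree $n$; then $a_0(\tau)=\tilde f_\tau(0)\neq0$, so by Proposition~\ref{prop: char poly}(ii) $G_f$ contains an element with characteristic polynomial $\tilde f_\tau$, which acts irreducibly, whence $G_f$ is irreducible. Assume for contradiction that $\SL_n(q)\not\leqslant G_f$, so $G_f\leqslant M$ for some maximal $M\in\CI=\bigcup_{i=2}^8\CC_i\cup\CS$. The crucial piece of linear algebra is that a \emph{squarefree} characteristic polynomial of an element of $\GL_{n/b}(q^b)\leqslant\GL_n(q)$ (for a prime $b\mid n$) has all of its irreducible factors of degree divisible by $b$: such an element is the restriction of scalars of an $\F_{q^b}$-linear map, and its $\F_q$-characteristic polynomial is the product of the $\Gal(\F_{q^b}/\F_q)$-conjugates of its $\F_{q^b}$-characteristic polynomial. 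Applying Proposition~\ref{prop:spec_fact} with the partition $(1,n-1)$ gives $\tau_1\in\F_q$ with $\tilde f_{\tau_1}$ squarefree and possessing a linear factor; hence $\tilde f_{\tau_1}$ is not the characteristic polynomial of any element of any $\GL_{n/b}(q^b)$, and in particular, whenever $M=H.b\in\CC_3$ with $H\cong\GL_{n/b}(q^b)$, we get $G_f\not\leqslant H$.

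Now I bring in the count. Let $B$ be the finite set of characteristic polynomials of elements of $\bigcup\CJ$; by Proposition~\ref{prop:small_n_count_polys}, $|B|=o(q^n)$. By the previous paragraph, whatever the type of $M$, every element of $G_f$ that lies in no $\GL_{n/b}(q^b)$ lies in $\bigcup\CJ$: if $M\notin\CC_3$ then $M\in\CJ$, and if $M=H.b\in\CC_3$ then $G_f\setminus H\subseteq M\setminus H\subseteq\bigcup\CJ$. Call $\tau\in\F_q$ \emph{admissible} if $\tilde f_\tau$ is squarefree with nonzero constant term and has a root in $\F_q$. For such $\tau$ the element of $G_f$ with characteristic polynomial $\tilde f_\tau$ furnished by Proposition~\ref{prop: char poly}(ii) lies outside every $\GL_{n/b}(q^b)$ (squarefree, linear factor, $b\ge2$), hence in $\bigcup\CJ$, so $\tilde f_\tau\in B$. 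Thus, on the event $\{\SL_n(q)\not\leqslant G_f\}$ intersected with the good event, \emph{every} admissible $\tau$ satisfies $\tilde f_\tau\in B$. But for a fixed $\tau$ the vector $(a_{n-1}(\tau),\ldots,a_0(\tau))$ is uniform in $\F_q^n$ (as $\deg a_i\le d$ and $d\ge1$), and for distinct values of $\tau$ these vectors are mutually independent; since admissible polynomials have positive density while $|B|/q^n=o(1)$, a second-moment (Chebyshev) estimate shows that with probability $1-O(q^{-1})$ there exists an admissible $\tau$ with $\tilde f_\tau\notin B$. This contradiction proves $\Prob(\SL_n(q)\not\leqslant G_f)=o(1)$, which completes the proof.

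I expect the main obstacle to be precisely the interplay in this last step: combining the Aschbacher-class bookkeeping---especially the treatment of the $\CC_3$ semilinear subgroups through the linear-factor trick and through the coset set $M\setminus H$ built into the definition of $\CJ$---with quantitative equidistribution of the specialized polynomials $\tilde f_\tau$. Since Proposition~\ref{prop:spec_fact} produces only \emph{one} specialization of a prescribed factorization type, the assertion that, with probability $1-o(1)$, some admissible specialization avoids $B$ (a set which is small, $o(q^n)$, but not small relative to $q$) genuinely requires a pairwise-independence input on $\{\tilde f_\tau\}_{\tau\in\F_q}$, and not Proposition~\ref{prop:spec_fact} by itself.
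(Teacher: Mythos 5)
Your proof is correct and follows essentially the same strategy as the paper's: control the determinant via Proposition~\ref{prop:det}, use Proposition~\ref{prop:spec_fact} together with Proposition~\ref{prop: char poly}(ii) to produce Frobenius elements with prescribed characteristic polynomial factorization types, use an irreducible specialization to rule out $\CC_1$, use a squarefree specialization with a linear factor to escape the $\GL_{n/b}(q^b)$ subgroups in $\CC_3$ (this is exactly \cite[Lemma~5.4]{FulmanGuralnick4}, though your restriction-of-scalars derivation is a perfectly good self-contained version), and then invoke Proposition~\ref{prop:small_n_count_polys} for the remaining Aschbacher classes. The one genuine difference is that you make explicit the final step --- passing from ``$|B|=o(q^n)$'' to ``with probability $1-o(1)$ some Frobenius element has characteristic polynomial outside $B$.'' The paper's proof as written simply ends with the count $|B|=o(q^n)$ and leaves this deduction implicit; your second-moment argument, relying on the pairwise independence of the coefficient vectors $(a_i(\tau))_i$ over distinct $\tau$ (valid since $d\ge 1$), is a clean and correct way to close this gap, and you are right that Proposition~\ref{prop:spec_fact} alone (which produces only a single specialization of a given factorization type, with no control on its distribution) does not suffice for this last step.

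Two small remarks. First, ``mutually independent'' is an overstatement --- the family $\{(a_i(\tau))_i\}_{\tau\in\F_q}$ is only $(d+1)$-wise independent --- but Chebyshev uses only pairwise independence, which holds, so the argument is unaffected. Second, your determinant step imposes the extra conditions that $a_0$ be squarefree of positive degree and that $\con_tf=X$; the paper only requires $a_0$ not to be a nontrivial power (which already forces $k\le 1$ and hence $\F_q^{\times k}=\F_q^\times$ in Proposition~\ref{prop:det} once $a_0$ is nonconstant). Your conditions are strictly stronger but still hold with probability $1-O(1/q)$, so this changes nothing; just be aware that requiring $\con_tf=X$ is unnecessary, since even when $\eta>0$ the determinant of the full action factors through $\delta$ and is still onto.
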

	\begin{proof}
		
		First consider the case where $n=1$. Then $f=X^q+a_0(t)X$ has Galois group $G_f \leqslant \FF_q^\times$. By Proposition 6.1, the subgroup is proper only if $a_0(t)=b(t)^r$ for some $r|q-1$. So the number of choices for $a_0\in \FF_q[t]_{\le d}$ which give $G_f \lneqq \FF_q^\times$ is at most $\sum_{r \mid q-1}q^{d/r+1}\le q^{o(1)}q^{d/2+1}$ since the number of divisors of $q-1$ is $q^{o(1)}$ (see, for example, \cite[Theorem 2.11]{MoVa06} for the analog over $\mathbb Z$). Therefore \[
		{\rm Prob} (G_f = \GL_1(q)) \ge 1-q^{-d/2+o(1)}.
		\]

		Now assume $n>1$. By Proposition~\ref{prop:det}, if $a_0$ is not a nontrivial power of another polynomial (which holds with probability $\to 1$ as $q\to\infty$) then $\det(G_f)=\F_q^\times$. Hence it is enough to show that $\Prob(G_f\supseteq\SL_n(q))\to 1$. 
		
		By Proposition~\ref{prop:spec_fact} and Proposition~\ref{prop: char poly}(ii), with probability at least $1-O\left(1/q\right)$, for any natural numbers $n_1, ..., n_k$ with $\sum_{i=1}^k n_i=n$ there is some $\sigma \in G_f$ such that the characteristic polynomial of $\sigma$ factorizes into irreducible polynomials of degrees $n_1, ..., n_k$.
		
		In particular, with probability at least $(1-O\left(1/q\right))^2$, there are elements $\sigma_{(n)}, \sigma_{(n-1,1)} \in G_f$ such that the characteristic polynomial of $\sigma_{(n)}$ is irreducible and the characteristic polynomial of $\sigma_{(n-1,1)}$ is the product of a linear factor and an irreducible polynomial.
		
		Characteristic polynomials of elements in reducible subgroups are reducible polynomials, so $\sigma_{(n)}$ is not contained in any member of $\mathcal{C}_1$. Similarly characteristic polynomials of elements in $\GL_{n/l}(q)^l$ factor into $l$ factors of degree $n/l$, and therefore $\sigma_{(n)}$ is not contained in the identity coset of any member of $\mathcal{C}_2$. Therefore with probability at least $1-O(1/q)$ the Galois group $G_f$ is not contained in any member of $\mathcal{C}_1$ or identity coset of a $\mathcal{C}_2$-subgroup.
		
		By \cite[Lemma~5.4]{FulmanGuralnick4}, the characteristic polynomials of elements in a conjugate of $\GL_{n/b}(q^b)$ are those whose irreducible factors with degree not divisible by $b$ appear with multiplicity a multiple of $b$. Therefore $\sigma_{(n-1,1)}$ cannot be contained in any conjugate of $\GL_{n/b}(q^b)$ unless $n=b=2$. In this case, every reducible characteristic polynomial of an element of $\GL_1(q^2)$ is inseparable, while $\sigma_{(1,1)}$ has separable characteristic polynomial. So with probability at least $1-O(1/q)$ the Galois group $G_f$ is not contained in the identity coset of any member of $\mathcal{C}_3$.
		
		The remaining elements of maximal subgroups of $\GL_n(q)$ not containing $\SL_n(q)$ are dealt with by Proposition~\ref{prop:small_n_count_polys}, which shows they contribute $o(q^n)$ characteristic polynomials as $n$ is fixed and $q \rightarrow \infty$. Therefore the probability that the Galois group $G_f$ is contained in any member of $\CD_2 \cup \CD_3 \cup \bigcup_{i=4}^8 \mathcal{C}_i \cup \CS$ is $o(1)$.
	\end{proof}
	
	\appendix
	\section{Specializations with prescribed factorization type}\label{sect:spec_fact}
	
	The present subsection is devoted to proving Proposition~\ref{prop:spec_fact}. The main ingredients in the proof are the Chebotarev Density Theorem in function fields and a Bertini-type theorem on specialization of Galois groups which is a consequence of Noether's irreducibility theorem.
	
	\begin{prop}[Chebotarev Density Theorem in function fields, the case $G=S_n$]\label{prop:cheb_sn}
		Let $F\in\F_q[t,X]$ be separable in $X$ and assume $\Gal(F/\overline{\F_q}(t))=S_n$. Let $n_1,\ldots,n_k$ be natural numbers such that $n=\sum_{i=1}^kn_k$. Then the number of $\tau\in\F_q$ such that $F_\tau=F(\tau,X)\in\F_q[x]$ factors into distinct irreducible polynomials of degree $n_1,\ldots,n_k$ equals
		$\frac c{n!}q+O_{\deg F}\left(q^{1/2}\right)$, where $c$ is the number of permutations in $S_n$ with cycle structure $(n_1,\ldots,n_k)$.
	\end{prop}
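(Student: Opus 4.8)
The plan is to deduce the statement from the effective Chebotarev Density Theorem in function fields, applied to the splitting field of $F$ over $\F_q(t)$, after translating ``factorization type of $F_\tau$'' into ``cycle type of the Frobenius class'' via the Dedekind--Frobenius dictionary (which is essentially Proposition~\ref{prop:spec} combined with standard Galois theory).

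First I would extract the structural consequences of the hypothesis $\Gal(F/\overline{\F_q}(t))=S_n$. Let $L$ be the splitting field of $F$ over $\F_q(t)$ and $L_0=L\cap\overline{\F_q}(t)$ its constant-field part, so that $L_0=\F_{q^m}(t)$ for some $m\ge 1$, the geometric Galois group $\Gal(L\overline{\F_q}/\overline{\F_q}(t))$ is canonically isomorphic to the normal subgroup $\Gal(L/L_0)\lhd G:=\Gal(L/\F_q(t))$, and $G/\Gal(L/L_0)\cong\Gal(\F_{q^m}/\F_q)$ is cyclic of order $m$. Since $F$ is separable in $X$ of degree $n$, $G$ acts faithfully on the $n$ roots, hence $G\leqslant S_n$; as $\Gal(L/L_0)=S_n$ by hypothesis, we get $G=S_n$ and $m=1$, i.e.\ $L/\F_q(t)$ is a geometric Galois extension with group $S_n$ and exact field of constants $\F_q$. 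Its genus $g_L$ is then $O_{\deg F}(1)$ by Riemann--Hurwitz, the ramification of $L/\F_q(t)$ being supported on the $O_{\deg F}(1)$ places of $\F_q(t)$ dividing the leading coefficient or the $X$-discriminant of $F$.

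Next I would set up the place-theoretic dictionary. The degree-one places of $\F_q(t)$ are $\{t-\tau:\tau\in\F_q\}\cup\{\infty\}$. Let $S$ be the set of places $t-\tau$ dividing the leading coefficient or the $X$-discriminant of $F$, together with $\infty$; then $|S|=O_{\deg F}(1)$. For $t-\tau\notin S$ the specialization $F_\tau\in\F_q[X]$ is separable of degree $n$ and $t-\tau$ is unramified in $L$, so $\Fr(F;\tau)$ is a well-defined conjugacy class in $G=S_n$; by the function-field Dedekind--Frobenius theorem (cf.\ \cite[\S9]{Ros02} and Proposition~\ref{prop:spec}) the multiset of degrees of the monic irreducible factors of $F_\tau$ over $\F_q$ equals the multiset of cycle lengths of any element of $\Fr(F;\tau)$, and these factors are automatically distinct. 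Writing $C\subseteq S_n$ for the conjugacy class of cycle type $(n_1,\ldots,n_k)$, with $|C|=c$, the $\tau\in\F_q$ counted in the proposition are thus exactly those with $\Fr(F;\tau)=C$, up to the $O_{\deg F}(1)$ values with $t-\tau\in S$.

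Finally I would invoke the quantitative form of the Chebotarev Density Theorem in function fields (cf.\ \cite[6.4.8]{FrJa08}): for a geometric Galois extension $L/\F_q(t)$ with group $G$ and genus $g_L$, and a conjugacy class $C\subseteq G$, the number of degree-one places of $\F_q(t)$ unramified in $L$ with Frobenius class $C$ is $\tfrac{|C|}{|G|}q+O\bigl(g_L\,q^{1/2}\bigr)$. Plugging in $|G|=n!$, $|C|=c$ and $g_L=O_{\deg F}(1)$, and correcting by the $O_{\deg F}(1)$ places in $S$, yields the asserted count $\tfrac{c}{n!}q+O_{\deg F}(q^{1/2})$. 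The main obstacle is the uniformity of the error term: one must (i) know that the field of constants of $L$ is exactly $\F_q$, so that the main term is genuinely of size $q$ rather than $0$ --- this is the normality argument of the second paragraph --- and (ii) bound $g_L$, equivalently the number and depth of ramified places, purely in terms of $\deg F$, which is a routine Riemann--Hurwitz computation once one observes that the ramification of $L/\F_q(t)$ is detected by the $X$-discriminant of $F$, a polynomial in $t$ of degree $O_{\deg F}(1)$.
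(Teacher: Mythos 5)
Your proof is correct and follows essentially the same route as the paper, which simply cites the function-field Chebotarev Density Theorem \cite[6.4.8]{FrJa08} together with the observation that the factorization type of $F_\tau$ matches the cycle structure of the Frobenius. You have spelled out the details the paper leaves implicit — that the hypothesis forces the extension to be geometric with exact constant field $\F_q$, that the genus is $O_{\deg F}(1)$, and the place-theoretic dictionary — all of which are correct and exactly what is needed to make the cited theorem yield the stated error term.
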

	
	\begin{proof} This follows from the Chebotarev Density Theorem in function fields [FJ08, 6.4.8], since the degrees in the factorization of $F_\tau$ are precisely the cycle structure of $\Fr_q$ acting on $Z(F_\tau)$.\end{proof}

	We recall some basic facts about Galois covers of varieties. For background on the topic (in the more general setting of schemes) see \cite[\S 5]{Sza09}. We work over a fixed algebraically closed field $k$. Let $V$ be an irreducible variety and $\phi:Y\to V$ a finite \'etale map of degree $n$. We may form the reduced $n$-fold fibered product
	$$Y^{(n)}_V=\{(y_1,\ldots,y_n)\in Y\times_V\cdots\times_VY:\, y_i\neq y_j\mbox{ if }i\neq j\}.$$
	Each connected component $W$ of $Y_V^{(n)}$ is Galois over $V$, $\Gal(W/V)\cong\Gal(k(W)/k(V))$, $k(W)$ is isomorphic to the compositum of the Galois closures (over $k(V)$) of the function fields of components of $Y$ and $|\Gal(W/V)|=\frac{n!}{c(Y^{(n)}_V)}$, where $c(Y_V^{(n)})$ is the number of components of $Y^{(n)}_V$. $\Gal(W/V)$ acts on $W\subset Y^{(n)}_V$ by permutations of the coordinates $(y_1,\ldots,y_n)$ and can be identified with a subgroup of $S_n$.
	
	If in addition $V$ is normal, $Z\subset V$ is an irreducible closed subvariety and $W'=W\cap\pi^{-1}(Z)$ is irreducible ($\pi:Y^{(d)}_V\to V$ is the standard projection) then $W'/Z$ is Galois and there is a natural isomorphism $\Gal(W'/Z)\xrightarrow{\sim}\Gal(W/V)$. Conversely, if $W'$ is reducible then each of its components has a strictly smaller Galois group over $Z$.
	
	In particular if $V$ is affine, normal and irreducible, $k[V]$ its coordinate ring and $F=\sum_0^nc_iX^i\in k[V][X]$ a polynomial with $c_n\Disc(F)\in k[V]^\times$, then $Y=\mathrm{Spec}\,k[V][X]/F\to \mathrm{Spec}\,k[V]=V$ is a finite \'etale cover and each irreducible component $W$ of $Y_V^{(n)}$ is Galois over $V$ with $\Gal(W/V)\cong\Gal(F/k(V)).$ If $Z\subset V$ is closed and irreducible, $F_Z=\sum_0^n c_i|_ZX^i\in k[Z][X]$ and $W'=\pi^{-1}(Z)\cap W$ is irreducible then $W'/Z$ is Galois with $$\Gal(F_Z/k(Z))\cong\Gal(W'/Z)\cong\Gal(W/V)\cong\Gal(F/k(V)).$$
	Conversely, if $\Gal(F_Z/k(Z))\cong\Gal(F/k(V))$ then $W'$ is irreducible.
	
	Now let $V,F,W$ be as above. We may identify $$Y^{(n)}_V=\{(v,x_1,\ldots,x_n)\in V\times\AA^n:F(v,x_i)=0,x_i\neq x_j \mbox{ if }i\neq j\}$$ and then $\Gal(W/V)\leqslant S_n$ acts on $W\subset Y^{(n)}_V$ by permuting $x_1,\ldots,x_n$. Also denote by $\xi_i:W\to k$ the projection to the $x_i$ coordinate. Note that $\xi_1,\ldots,\xi_n$ are precisely the roots of $F$ in $k(W)$. Let $\bflam=(\lambda_1,\ldots,\lambda_n)\in k^n$ and assume that for some (and therefore for a dense open subset of) $(v,x_1,\ldots,x_n)\in W$  we have \begin{equation}\label{eq:lam cond}\sum_{i=1}^n\lambda_ix_{\sigma(i)}\neq\sum_{i=1}^n\lambda_ix_i\quad\mbox{for all}\quad 1\neq\sigma\in\Gal(W/V).\end{equation} Then \begin{equation}\label{eq:resolvent}H_{F,W,\bflam}=\prod_{\sigma\in\Gal(W/V)}\left( X-\sum_{i=1}^n\lambda_i\xi_{\sigma(i)}\right)\in k[V][X]\end{equation} is called a resolvent for $F$ (the coefficients are in $k[V]$ because $k[W]^{\Gal(W/V)}=k[V]$).
	
	The resolvent $H=H_{F,W,\lambda}$ is irreducible and separable (by (\ref{eq:lam cond})). If additionally $\Disc(H)\neq 0$ on all of $V$ (i.e. the condition (\ref{eq:lam cond}) holds everywhere) then $W\cong\Spec k[V][X]/H$ (as $V$-covers, isomorphism given by $(v,x_1,\ldots,x_n)\mapsto (v,\textstyle\sum_{i=1}^n \lambda_i x_i)$) and for any closed irreducible $Z\subset V$ we have $W'=\pi^{-1}(Z)\cap W\cong\Spec k[Z][X]/H_Z$ ($H_Z=\textstyle\sum h_i|_ZX^i\in k[Z][X]$, where $H=\textstyle\sum h_iX^i$). Hence $\Gal(W'/Z)=\Gal(W/V)$ iff $H_Z$ is irreducible (both equivalent to $W'$ being irreducible).
	
	\begin{prop}[Bertini-Noether-type irreducibility theorem for Galois groups] Let $k$ be an algebraically closed field, $\mathbf A=(A_1,\ldots,A_m),t,X$ variables, $F\in k[\bfA,t,X]$ separable of degree $n$ in $X$ and assume that for some $\bfa\in k^m$ with $F(\bfa,t,X)$ separable of degree $n$ in $X$ we have $\Gal(F(\bfa,t,X)/k(t))\cong\Gal(F(\bfA,t,X)/k(\bfA,t))$. Then
		$$S=\left\{\bfa\in\AA^m_k:F(\bfa,t,X)\mbox{ separable of degree }n\mbox{ in } X,\,\Gal(F(\bfa,t,X)/k(t))\not\cong\Gal(F/k(\bfA,t))\right\}$$ is a Zariski closed subset of $\mathbf \AA^m_k$ and contained in a hypersurface of degree $O_{m,\deg F}(1)$ ($\deg F$ is the total degree in $\bfA,t,X$).\end{prop}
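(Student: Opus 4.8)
The plan is to reduce the assertion about Galois groups to a single absolute-irreducibility statement and then feed it into the effective form of Noether's irreducibility theorem, using the resolvent formalism developed in the paragraphs above. Write $F=\sum_{i=0}^{n}c_i(\mathbf A,t)X^i$ and set $V_0=\Spec k[\mathbf A,t]_{c_n\Disc_X F}$, which is affine, normal and irreducible with $k(V_0)=k(\mathbf A,t)$. First I would form the finite \'etale cover $Y=\Spec k[V_0][X]/(F)\to V_0$, pick a connected component $W$ of $Y^{(n)}_{V_0}$ (so $\Gal(W/V_0)\cong\Gal(F/k(\mathbf A,t))$), and choose $\bflam\in k^n$ so that \eqref{eq:lam cond} holds at the generic point of $W$; this is possible since for $1\neq\sigma\in\Gal(W/V_0)$ the coordinate functions $\xi_{\sigma(i)}$ and $\xi_i$ differ on $W$, so the offending $\bflam$ lie in finitely many proper subspaces of $k^n$. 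This produces the resolvent $H=H_{F,W,\bflam}\in k[V_0][X]$, monic of degree $N:=|\Gal(W/V_0)|$ in $X$, irreducible and separable over $k(V_0)$. Replacing $V_0$ by $V=V_0\setminus\{\Disc_X H=0\}=\Spec k[\mathbf A,t]_{g}$, where $g=c_n\cdot\Disc_X F\cdot p$ and $p\in k[\mathbf A,t]$ is a numerator of $\Disc_X H$, we are exactly in the situation of those paragraphs: $W|_V\cong\Spec k[V][X]/(H)$ as $V$-covers, and for every closed irreducible $Z\subseteq V$ the cover $W_Z=\pi^{-1}(Z)\cap W$ is irreducible --- equivalently $\Gal(W_Z/Z)\cong\Gal(W/V)$ --- if and only if $H_Z$ is irreducible over $k(Z)$.

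Next I would clear denominators. Multiplying $H$ by a suitable power of $c_n\Disc_X F$ and taking the primitive part yields $G\in k[\mathbf A,t,X]$ with $G=\ell H$ in $k(\mathbf A,t)[X]$ for some $0\neq\ell\in k[\mathbf A,t]$ dividing a power of $c_n\Disc_X F$; by Gauss's lemma $G$ is irreducible in $k[\mathbf A,t,X]$, hence absolutely irreducible since $k=\bar k$, with $\deg_X G=N$ and leading $X$-coefficient $\ell$. The hard part will be to verify that $\deg G$ is bounded \emph{only} in terms of $m$ and $\deg F$: the coefficients of $H$ are symmetric functions, of controlled degree, in the $N$ quantities $\sum_i\lambda_i\xi_{\sigma(i)}$, which are integral of degree $\le n$ over $k[V_0]$, so the powers of $c_n\Disc_X F$ that must be cleared are bounded in terms of $n$ and $\deg F$. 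This is the effective input behind Bertini--Noether, and I would either extract it from the standard references (e.g.\ \cite[Ch.~13]{FrJa08}) or carry out the degree accounting directly.

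Now I would translate, using the dictionary recalled above. Write $g=\sum_j g_j(\mathbf A)t^j$ and fix $j_0$ with $g_{j_0}\neq 0$; let $\mathcal H_0=\{g_{j_0}=0\}$, a hypersurface of degree $\le\deg g=O_{m,\deg F}(1)$. For $\mathbf a\notin\mathcal H_0$ with $F(\mathbf a,t,X)$ separable of degree $n$ in $X$ one has $g(\mathbf a,t)\not\equiv0$ in $t$, so the line $Z_{\mathbf a}=\{\mathbf a\}\times\AA^1_t$ meets $V$ and $Z'_{\mathbf a}=Z_{\mathbf a}\cap V$ is a nonempty open, hence closed irreducible, subvariety of the line with $k(Z'_{\mathbf a})=k(t)$ and $F|_{Z'_{\mathbf a}}=F(\mathbf a,t,X)$. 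By the dictionary, $\Gal(F(\mathbf a,t,X)/k(t))\cong\Gal(F/k(\mathbf A,t))$ if and only if $H_{Z'_{\mathbf a}}=H(\mathbf a,t,X)$ is irreducible over $k(t)$; and since $G(\mathbf a,t,X)=\ell(\mathbf a,t)H(\mathbf a,t,X)$ with $0\neq\ell(\mathbf a,t)\in k[t]$ (so $\deg_X G(\mathbf a,t,X)=N$), this is equivalent to irreducibility of $G(\mathbf a,t,X)$ over $k(t)$. Hence, with $S_1:=\{\mathbf a\in\AA^m_k:\deg_X G(\mathbf a,t,X)<N\text{ or }G(\mathbf a,t,X)\text{ reducible over }k(t)\}$, we get $S\subseteq\mathcal H_0\cup S_1$.

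Finally I would apply the effective Bertini--Noether theorem (the effective form of Noether's irreducibility theorem) to the absolutely irreducible $G\in k[\mathbf A][t,X]$ of total degree $O_{m,\deg F}(1)$: a $k(t)$-reducible specialization of $G$ has, by Gauss, a factor of positive $X$-degree in $k[t,X]$, so $S_1$ lies in the usual Bertini--Noether exceptional locus of $G$, which is Zariski closed and contained in a hypersurface $\mathcal H_1$ of degree $O_{m,\deg F}(1)$. Therefore $S\subseteq\mathcal H_0\cup\mathcal H_1$ is contained in a hypersurface of degree $O_{m,\deg F}(1)$; and $S$ is itself Zariski closed because it is the locus where $F(\mathbf a,t,X)$ stays separable of degree $n$ in $X$ while $\Gal(F(\mathbf a,t,X)/k(t))$ becomes a proper subgroup of $\Gal(F/k(\mathbf A,t))$, a closed condition by the standard specialization theory for Galois groups (the hypothesis that the maximal group is attained ensuring this locus is proper). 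As indicated, the real obstacle is the effective degree bound on the resolvent $G$; the rest is bookkeeping on the resolvent machinery already in place, the one extra ingredient being the elementary hypersurface $\mathcal H_0$, which accounts for the lines $Z_{\mathbf a}$ that fail to meet the shrunken base $V$ --- equivalently, along which the resolvent acquires a multiple root.
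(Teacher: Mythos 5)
Your proposal follows essentially the same route as the paper: form the resolvent $H_{F,W,\bflam}$ for a connected component $W$ of $Y^{(n)}_V$, translate ``the Galois group drops'' into ``the specialized resolvent becomes reducible,'' and finish with the effective Noether irreducibility theorem. The step you flag as the hard part and leave unverified, namely that $\deg G = O_{m,\deg F}(1)$, is exactly where the paper's one extra move earns its keep: it first reduces WLOG to $c_n=1$ by replacing $F$ with $c_n^{n-1}F(X/c_n)$, so that the roots $\xi_i$ are integral over $k[\bfA,t]$, the resolvent $H$ lands directly in $k[\bfA,t,X]$ with no denominators to clear, and the degree bound is read off from the divisibility $H\mid\prod_{\sigma\in S_n}\bigl(X-\sum_i\lambda_i\xi_{\sigma(i)}\bigr)$ by expanding the right-hand side in the elementary symmetric functions of the $\xi_i$ (i.e.\ the coefficients of $F$). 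Your denominator-clearing variant can be made to work --- for instance $c_n^{N}H(X/c_n)$ with $N=|\Gal(W/V)|\le n!$ has integral coefficients of controlled degree --- but as written the degree accounting is a genuine gap, and it is the entire effective content of the proposition, so the proof is incomplete without it.

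Two smaller points. First, the paper chooses $\bflam$ so that $\Disc_X H$ is nonzero at the specific point $(\bfa_0,\tau_0)$ furnished by the hypothesis, not merely at the generic point of $W$; this is what lets it conclude that the set of $\bfa$ preserving the Galois group is open, by re-running the construction from any such $\bfa_0$. Your appeal to ``standard specialization theory'' for the Zariski closedness of $S$ does not actually supply that argument. Second, your hypersurface $\mathcal H_0$ (the $\bfa$ whose line $\{\bfa\}\times\AA^1$ misses the shrunken base) plays the same role as the paper's factor $d_i$, a nonvanishing $t$-coefficient of $\Disc_XF\cdot\Disc_XH$; that part is accurate and matches the paper's bookkeeping.
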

	
	\begin{proof} Write $F=\sum_{i=0}^n c_i(\bfA,t)X^i\in k[\bfA,t][X],\,c_n\neq 0$. Since $f$ is separable $c_n\Disc(F)\neq 0$. Replacing $F$ with $c_n^{n-1}F(X/c_n)$ we assume WLOG that $c_n=1$. Consider $$V=\{(\bfa,\tau)\in\AA^{m+1}:\Disc_XF(\bfa,\tau,X)\neq 0\},$$
		$$Y=\{(\bfa,\tau,x)\in\AA^{m+2}:\bfa\in V,F(\bfa,\tau,x)=0\}.$$ The projection $Y\to V:(\bfa,\tau,x)\mapsto(\bfa,\tau)$ is finite \'etale of degree $n$. Note that $Y,V$ are affine and $V$ is smooth and irreducible. Let $W$ be an irreducible component of $Y^{(n)}_V$ and $\xi_1,\ldots,\xi_n:W\to k$ the projections defined in the discussion preceding the proposition (equivalently the roots of $F$ in $k(W)$)
		
		Now choose a resolvent $H=H_{F,W,\bflam}\in k[V][X]=k[\bfA,t]_{\Disc(F)}[X]$ for $F$ such that \\$\Disc_X(H)(\bfa_0,\tau_0)\neq 0$ (a suitable choice of $\bflam$ satisfying (\ref{eq:lam cond}) is possible because $k$ is infinite). By (\ref{eq:resolvent}), the assumption that $F$ is monic and the fact that $k[\bfA,t]$ is integrally closed, we have in fact $H\in k[\bfA,t,X]$. Since $H|\prod_{\sigma\in S_n}(X-\textstyle\sum_{i=1}^n\lambda_i\xi_{\sigma(i)})$ we also see (by expanding the RHS and writing each coefficient of $X^i$ as a polynomial in the elementary symmetric functions in $\xi_1,\ldots,\xi_n$) that $\deg H,\deg\Disc_X(H)=O_{m,\deg F}(1)$. Denote $U=V\setminus \{\Disc_X(H)=0\}$, $P:\AA^{m+1}\to\AA^m$ the projection $P(\bfa,\tau)=\bfa$. By the discussion preceding the proposition we know that for $\bfa\in P(U)$ we have $\Gal(F(\bfa,t,X)/k(t))=\Gal(F/k(\bfA,t))$ iff $H_{U\cap(\bfa\times\AA^1)}=H(\bfa,t,X)\in k[t,X]$ is irreducible. We also know that $\bfa_0\in P(U)$.
		
		Write $H=\sum h_{ij}(\bfA)t^iX^j$. By the Noether irreducibility theorem (apply \cite[Theorem 5.3.1]{Gey13} to the homogenized version of $H$ w.r.t. the variables $t,X$ and use the fact that $H$ is monic in $X$) there exists polynomials $g_1,\ldots,g_N\in\mathbb Z[X_{ij}]$ with $N,\deg g_l=O_{\mathrm{deg}_{t,X}H}(1)$ such that $H(\bfa,t,X)$ is reducible iff $g_1(h_{ij}(\bfa))=\ldots=g_N(h_{ij}(\bfa))=0$. Hence $g_l(h_{ij}(\bfa_0))\neq 0$ for some $l$ and we assume WLOG that $g_1(h_{ij}(\bfa_0))\neq 0$. 
		
		Setting $\tilde g=g_1(h_{ij}(\bfA))\in k[\bfA]$ we have $\deg\tilde g=O_{m,\deg F}(1)$ and for $\bfa\in P(U)$ such that $\tilde g(\bfa)\neq 0$ we have $H(\bfa,t,X)$ irreducible and hence $\Gal(F(\bfa,t,X)/k(t))=\Gal(F/k(\bfA,t))$. This shows that $P(V)\setminus S=\{\bfa\in P(V): \Gal(F(\bfa,t,X)/k(t))=\Gal(F/k(\bfA,t))\}$ is open (since we can run the above argument starting from any $\bfa_0\in P(V)\setminus S$).
		
		Now write $\Disc_XF\cdot\Disc_XH=\sum d_i(\bfa)t^i$ and let $i$ be such that $d_i\neq 0$. Note that $P(U)\supset\{d_i\neq 0\}$ and hence $S\subset\{\bfa\in\AA^m:\tilde g(\bfa)d_i(\bfa)=0\}$ is contained in a hypersurface defined by an equation of degree $O_{m,\deg F}(1)$.
		
	\end{proof}
	
	\begin{prop}\label{prop:spec_ff} Let $\mathbf A=(A_1,\ldots,A_m),t,X$ be variables, $F\in\F_q[\bfA,t,X]$ separable in $X$. Assume that for some $\bfa\in\overline{\F_q}^n$ with $F(\bfa,t,X)$ separable of degree $n$ in $X$ we have\\ $\Gal(F(\bfa,t,X)/\overline{\F_q}(t))\cong\Gal(F/\F_q(\bfA,t))$. Then for all but $O_{m,\deg F}(q^{m-1})$ tuples $\bfa\in\F_q^m$ we have that $F(a,t,X)$ is separable of degree $n$ in $X$ and $\Gal(F(\bfa,t,X)/\F_q(t))\cong\Gal(F/\F_q(\bfA,t))$.
	\end{prop}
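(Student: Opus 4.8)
The plan is to reduce to the Bertini--Noether-type proposition above, applied over the algebraic closure $k=\overline{\F_q}$, and then pass from $\overline{\F_q}$-points to $\F_q$-points using the trivial (Schwartz--Zippel) bound for the $\F_q$-points of a hypersurface together with the standard specialization inclusions for Galois groups.

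The first step is to reconcile the arithmetic Galois group $G=\Gal(F/\F_q(\bfA,t))$ with the geometric one $\bar G=\Gal(F/\overline{\F_q}(\bfA,t))$. Since $F$ has coefficients in $\F_q$, $\bar G$ is conjugate to a subgroup of $G$; and for the tuple $\bfa_0\in\overline{\F_q}^m$ furnished by the hypothesis, specialization over $\overline{\F_q}$ (specializing $\bfA\mapsto\bfa_0$) makes $\Gal(F(\bfa_0,t,X)/\overline{\F_q}(t))$ conjugate to a subgroup of $\bar G$. As this last group is isomorphic to $G$ by hypothesis, comparing orders along $\Gal(F(\bfa_0,t,X)/\overline{\F_q}(t))\hookrightarrow\bar G\hookrightarrow G$ forces $\bar G=G$ and $\Gal(F(\bfa_0,t,X)/\overline{\F_q}(t))\cong\bar G$. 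Thus the hypothesis of the previous proposition holds with $k=\overline{\F_q}$ and the same $\bfa_0$, so its conclusion gives that the locus $S$ of $\bfa\in\AA^m_{\overline{\F_q}}$ with $F(\bfa,t,X)$ separable of degree $n$ in $X$ but $\Gal(F(\bfa,t,X)/\overline{\F_q}(t))\not\cong G$ is contained in a hypersurface of degree $O_{m,\deg F}(1)$.

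Separately, the locus $\Sigma$ of $\bfa$ for which $F(\bfa,t,X)$ fails to be separable of degree $n$ in $X$ is the union of $\{\,c_n(\bfa,t)\equiv 0\ \text{in }t\,\}$ and $\{\,\Disc_X F(\bfa,t,X)\equiv 0\ \text{in }t\,\}$, where $c_n\in\F_q[\bfA,t]$ is the coefficient of $X^n$ in $F$; both $c_n$ and $\Disc_X F$ are nonzero since $F$ is separable of degree $n$ in $X$, so picking one nonzero $t$-coefficient of each places $\Sigma$ inside a hypersurface of degree $O_{m,\deg F}(1)$. Since a degree-$D$ hypersurface in $\AA^m$ contains at most $Dq^{m-1}$ points of $\F_q^m$, we obtain $\#\big((S\cup\Sigma)\cap\F_q^m\big)=O_{m,\deg F}(q^{m-1})$.

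Finally I would check that every $\bfa\in\F_q^m\setminus(S\cup\Sigma)$ satisfies the conclusion. For such $\bfa$, the polynomial $F(\bfa,t,X)\in\F_q[t,X]$ is separable of degree $n$ in $X$ (as $\bfa\notin\Sigma$) and $\Gal(F(\bfa,t,X)/\overline{\F_q}(t))\cong G$ (as $\bfa\notin S$); moreover the geometric Galois group of $F(\bfa,t,X)$ is a subgroup of its arithmetic Galois group, while specialization over $\F_q$ makes the arithmetic group $\Gal(F(\bfa,t,X)/\F_q(t))$ conjugate to a subgroup of $G$. Comparing orders along $G\cong\Gal(F(\bfa,t,X)/\overline{\F_q}(t))\hookrightarrow\Gal(F(\bfa,t,X)/\F_q(t))\hookrightarrow G$ yields $\Gal(F(\bfa,t,X)/\F_q(t))\cong G=\Gal(F/\F_q(\bfA,t))$, as required. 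The real content here is the previous (Bertini--Noether) proposition; the point requiring care is the first step, relating the arithmetic and geometric generic Galois groups, after which the argument is a routine point count.
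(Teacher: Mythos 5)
Your proposal is correct and follows essentially the same route as the paper: reconcile the arithmetic and geometric generic Galois groups via the chain $\Gal(F(\bfa_0,t,X)/\overline{\F_q}(t))\hookrightarrow\Gal(F/\overline{\F_q}(\bfA,t))\hookrightarrow\Gal(F/\F_q(\bfA,t))$, apply the preceding Bertini--Noether proposition over $k=\overline{\F_q}$, and then count $\F_q$-points on the resulting bounded-degree hypersurface (the paper cites Lang--Weil, Lemma 1, which is exactly the trivial bound you state). Your treatment is marginally more careful than the paper's in explicitly bounding the locus $\Sigma$ where $F(\bfa,t,X)$ fails to be separable of degree $n$ in $X$, rather than folding it implicitly into the hypersurface from the previous proposition; this is a cosmetic refinement, not a different argument.
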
 
	
	\begin{proof} First note that since $\Gal(F(\bfa,t,X)/\overline{\F_q}(t))\hookrightarrow\Gal(F/\overline{\F_q}(\bfA,t))\hookrightarrow\Gal(F/\F_q(\bfA,t))$ the assumption implies $\Gal(F(\bfa,t,X)/\F_q(t))\cong\Gal(F/\overline{\F_q}(\bfA,t))$. Apply the previous proposition with $k=\overline{\F_q}$. The set of $\bfa\in\F_q^m$ not satisfying the assertion is precisely $S(\F_q)$ in the notation of the previous proposition. Since $S$ is contained in a hypersurface of degree $O_{m,\deg F}(1)$ the assertion follows from \cite[Lemma 1]{LaWe54}.
	\end{proof}
	
	\begin{proof}[Proof of Proposition~\ref{prop:spec_fact}] Let $n,d$ be fixed natural numbers, and consider $$F[\bfA, X]=X^n+\sum_{i=0}^{n-1}a_i(t)X^i\in\F_q[\bfA,t,X]$$ where $a_i=\sum_{j=0}^dA_{ij}t^j,0\le i\le n-1,0\le j\le d$ and $\bfA=(A_{ij})$ are independent variables. If we find a single $f\in\overline{\F_q}[X]$ such that $\Gal(f(X)-t/\overline{\F_q}(t))=S_n$, then since $\Gal(F/\overline{\F_q}(\bfA,t))\leqslant S_n$ we have $\Gal(F/k(\bfA,t))\cong \Gal(f(X)-t/\overline{\F_q}(t))\cong S_n$ and Proposition~\ref{prop:spec_ff} shows that for all but $O\left(q^{n(d+1)-1}\right)$ specializations of $\bfa$ of $\bfA$ we have $\Gal(F(\bfa,t,X)/\F_q(t))=S_n$. For any such $\bfa$, Proposition~\ref{prop:cheb_sn} shows that a specialization $F(\bfa,\tau,X),\tau\in\F_q$ with the required factorization exists if $q$ is large enough. It remains to produce an $f$ as above for every $n\ge 1$ and prime power $q$.
		
		Assuming $n\ge 3$ (otherwise one of $f=X,X^2,X^2+X$ works), let $h\in\overline{\F_q}[X]$ be separable of degree $n-2$ with $h(0)\neq 0$ such that $f=X^2h(X)$ is indecomposable, i.e. cannot be written $f=u\circ v,\,\deg u,\deg v\ge 2$. An $h$ as above exists for $n\ge 3$ by a simple dimension count. Then standard methods for computing Galois groups (see e.g. \cite{BiSw59}) show that $\Gal(f(X)-t/\overline{\F_q}(t))\leqslant S_n$ is primitive (because $f$ is indecomposable) and contains a transposition (because of the simple ramification of $f$ over $0$), hence is $S_n$ by \cite[Theorem 13.3]{Wie64}, as required.
		
	\end{proof}
	
	\section{Counting characteristic polynomials when $q$ is large}\label{sec:subgroups large q}
	
	This appendix is devoted to the proof of Proposition~\ref{prop:small_n_count_polys}. There are at most $q^n$ conjugacy classes in $\GL_n(q)$ \cite[Lemma~5.9]{Maslen1997}. Regular semisimple elements of $\GL_n(q)$ are semisimple elements with pairwise distinct eigenvalues, and since this property is preserved by conjugation we refer to conjugacy classes of regular semisimple elements as regular semisimple conjugacy classes. By \cite[Theorem~2.2]{fulman2013number} the number of conjugacy classes in $\GL_n(q)$ which are not regular semisimple is at most $q^n-\frac{q-1}{q+1}(q^n-(-1)^{n})=o_n(q^n)$, so we may restrict ourselves to regular semisimple conjugacy classes.
	
	It is standard that if $C$ is a regular semisimple conjugacy class then $|C|>a|\GL_n(q)|/q^n$ for some $a>0$ depending only on $n$ and not on $q$. Now let $\Delta$ be the set of all elements of $\GL_n(q)$ which are regular semisimple and not contained in $\bigcup \CJ$. This is the same $\Delta$ as in \cite{Garzoni}, and for more information we direct the reader to a textbook on linear algebraic groups, for example \cite{MalleTesterman}. By \cite[Theorem~3.6]{Garzoni}, $|\Delta|=|\GL_n(q)|(1-O_n\left(1/q\right))$.
	
	Let $J$ be the set of all regular semisimple elements of $\bigcup \CJ$ and let $J'$ be the set of regular semisimple conjugacy classes of $\GL_n(q)$ which intersect some member of $\CJ$. Then using \cite[Theorem~3.6]{Garzoni} and the lower bound on the size of a regular semisimple conjugacy class we see that \[\frac{|J'|}{q^n} < \frac{|J|}{|\GL_n(q)|}=O_n(1/q)\] whence $|J'|=o_n(q^n)$. Therefore the number of characteristic polynomials of elements in $\bigcup \CJ$ is $o_n(q^n)$.

	\bibliography{bib}
	\bibliographystyle{alpha}
\end{document}